\crefname{enumi}{}{}
\renewcommand\theta{\vartheta}
\numberwithin{equation}{section}
\newcommand\HopfAlgB{\operatorname{\underline{HopfAlg}}(\mathcal B)}
\newcommand\CoalgB{\operatorname{\underline{Coalg}}(\mathcal B)}
\newcommand\Alg{\operatorname{\underline{Alg}}}
\newcommand\Coalg{\operatorname{\underline{Coalg}}}
\newcommand\Hopfend{\operatorname{End}}
\newcommand\Hopfaut{\operatorname{Aut}}
\newcommand\Hopfmor{\operatorname{Hom}}
\newcommand\Zenthom{\operatorname{Hom}^c}
\newtheorem{Lem}{Lemma}[section]
\newtheorem{Prop}[Lem]{Proposition}
\newtheorem{Cor}[Lem]{Corollary}
\newtheorem{Thm}[Lem]{Theorem}
\newtheorem*{intro-thm}{Theorem}
\theoremstyle{definition}
\newtheorem{Def}[Lem]{Definition}
\theoremstyle{remark}
\newtheorem{Rem}[Lem]{Remark}
\newtheorem{Rems}[Lem]{Remarks}
\newtheorem{Expl}[Lem]{Example}
\newcommand\ot{\otimes}
\newcommand\nob{I}
\DeclareMathOperator\id{\operatorname{id}}
\newcommand\inv{^{-1}}
\newcommand\Hom{\operatorname{Hom}}
\def\HMB#1.#2.#3.#4.{{^{#1}_{#3}\mathcal B^{#2}_{#4}}}
\renewcommand\epsilon\varepsilon
\newcommand\B{\mathcal B}
\newcommand\D{\mathcal D}
\newcommand\Z{\mathbb Z}
\newcommand\N{\mathbb N}
\newcommand\kk{\mathbbm k}
\newcommand\Ker{\operatorname{Ker}}
\newcommand\Coker{\operatorname{Coker}}
\newcommand\Img{\operatorname{Im}}
\newcommand\Coim{\operatorname{Coim}}
\def\BHM#1.#2.#3.#4.{{^{#1}_{#3}\mathcal B^{#2}_{#4}}}
\newcommand\rcofix[2]{{#1}{^{\operatorname{co} #2}}}
\newcommand\lcofix[2]{{{^{\operatorname{co} #2}}{#1}}}
\newcommand\lquot[2]{{#2}\backslash{#1}}
\newcommand\comm\curlyvee
\newcommand\cocomm\curlywedge
\newcommand\brd{\tau}
\newcommand\commute[2]{%
   \gbeg23
   \gcl1\gcl1\gnl
   \gbmp{#2}\gbmp{#1}\gnl
   \gmu\gend
   =
   \gbeg23
   \gbr\gnl
   \gbmp{#1}\gbmp{#2}\gnl
   \gmu\gend}
\newcommand\cocommute[2]{%
   \gbeg23
   \gcmu\gnl
   \gbmp{#2}\gbmp{#1}\gnl
   \gcl1\gcl1\gend
   =
   \gbeg23
   \gcmu\gnl
   \gbmp{#1}\gbmp{#2}\gnl
   \gbr\gend}
\newcommand{\du}[1]{\kk^{#1}} 
\newcommand{\tprod}[1]{\kk^{#1}\otimes\kk#1} 
\begin{document}
 \title[Tensor factorizations]{On tensor factorizations of Hopf algebras}
 \author{Marc Keilberg}
 \email{keilberg@usc.edu}
 \author{Peter Schauenburg}
 \email{peter.schauenburg@u-bourgogne.fr}
 \address{Institut de Math{\'e}matiques de Bourgogne, UMR 5584 du CNRS
\\
Universit{\'e} de Bourgogne\\
Facult{\'e} des Sciences Mirande\\
9 avenue Alain Savary\\
BP 47870 21078 Dijon Cedex\\
France
}
\thanks{Research partially supported through a FABER Grant by the \emph{Conseil régional de Bourgogne}}
\begin{abstract}
  We prove a variety results on tensor product factorizations of finite dimensional Hopf algebras (more generally Hopf algebras satisfying chain conditions in suitable braided categories). The results are analogs of well-known results on direct product factorizations of finite groups (or groups with chain conditions) such as Fitting's Lemma and the uniqueness of the Krull-Remak-Schmidt factorization. We analyze the notion of normal (and conormal) Hopf algebra endomorphisms, and the structure of endomorphisms and automorphisms of tensor products. The results are then applied to compute the automorphism group of the Drinfeld double of a finite group in the case where the group contains an abelian factor. (If it doesn't, the group can be calculated by results of the first author.)
\end{abstract}

\maketitle

%
%
\section*{Introduction}

The larger part of this paper is concerned with general results on Hopf algebras in braided categories generalizing well-known results from the theory of finite groups (or groups with chain conditions), such as Fitting's lemma, the Krull-Remak-Schmidt decomposition, and a description of endomorphisms and automorphisms of products of Hopf algebras. The last section deals with the description of the automorphism group of the Drinfeld double $\D(G)$ of a finite group $G$. This last problem was the starting point of our work.

 In the case that $G$ has no non-trivial abelian direct factors, a complete description of the automorphisms was given in \cite{K14}.  The case when $G$ has such an abelian factor was left open. We will write such a group as $G=C\times H$, where $H$ has no non-trivial abelian direct factors and $C$ is abelian.  In this case we naturally have that  $\D(G)\cong \D(C)\ot\D(H)$ is a tensor product of Hopf algebras.

 Thus, we are naturally led to analyze endomorphisms and automorphisms of a tensor product of two Hopf algebras. In \cite{BidCurMcC:ADPFG,Bid:ADPFG2} an analysis of the automorphisms of direct products of groups was provided. The basic idea is to describe such automorphisms by a matrix of morphisms between the factors. The machinery of normal group endomorphisms and Fitting's lemma then allows one to deduce conditions on the various morphisms from conditions on the factors. For example, when the two factors have no common direct factors, then the diagonal terms of the matrix have to be automorphisms. In \cref{sec:autom-tens-prod} we derive suitably analogous results for tensor product Hopf algebras. Before this can be done, however, we have to carry over to our Hopf algebraic setting some basic notions and classical results from group theory. In \cref{sec:comm-cocomm-morph} we develop the terminology of commuting morphisms (for groups these are just morphisms whose images commute) and dually of cocommuting morphisms, and in \cref{sec:normal-endomorphisms} the notions of normal and conormal Hopf endomorphisms. The analog of Fitting's lemma which will produce tensor product decompositions from binormal endomorphisms and thus, under suitable circumstances, common tensor factors from certain endomorphisms of tensor products, will be proved in \cref{sec-fitting}. An important application of Fitting's lemma in group theory is the uniqueness of the Krull-Remak-Schmidt decomposition, which we prove in \cref{sec:krull-remak-schmidt}. Extensions of the Krull-Remak-Schmidt decomposition were studied previously in \cite{Bur:CFPHA} for decompositions of semisimple Hopf algebras into simple semisimple tensor factors. By contrast our techniques make no use of semisimplicity but only of chain conditions. It is also worth noting that the Krull-Remak-Schmidt result shows that our results are specific to Hopf algebras and cannot be readily generalized to finite or even fusion tensor categories. In fact Müger \cite{Mue:SMC} gives an example where the factors in the decomposition of a fusion category into prime factors are not unique.

In fact the above results on the structure theory of finite dimensional Hopf algebras over a field $\kk$ will be developed in greater generality for Hopf algebras in braided abelian tensor categories that fulfill chain conditions on Hopf subalgebras and quotient Hopf algebras. Apart from the fact that the results will thus immediately apply to objects like super-Hopf algebras, for some purposes the categorical setting is simply very natural, since it allows treating mutually dual notions like normality and conormality or ascending and descending chain conditions on the same footing. If the braiding of the base category is not a symmetry, then some of our basic objects of study may be hard to come by: It is well-known that the tensor product of two Hopf algebras in a braided monoidal category can only be formed if the two factors are ``unbraided'', that is, the braiding between them behaves like a symmetry. On the other hand, some of our results imply that tensor product decompositions have to exist in certain situations. Thus these results also imply that the braiding has to be ``partially trivial''. For example, if non-nilpotent normal endomorphisms of a Hopf algebra exist, they have to be isomorphisms by Fitting's lemma unless the braiding is partially trivial. An automorphism between a tensor product of nonisomorphic Hopf algebras (necessarily ``unbraided'' between each other) has to induce automorphisms on the factors, unless the braiding is partially trivial on one of the factors.

\Cref{sec:endom-that-are} deals with some technical issues raised by our categorical framework. In preparation for Fitting's lemma we decompose a Hopf algebra with chain conditions, for which a Hopf algebra endomorphism is given, into a Radford biproduct (in the generalized braided version due to Bespalov and Drabant \cite{BesDra:HBMCMBC}). A technical result on (co)invariants under Hopf algebra endomorphisms has some bearing on the notions of epimorphisms and monomorphisms studied notably for infinite dimensional Hopf algebras in \cite{Chi:EMHA}.

In \cref{sec:appl-doubl-groups} we present the application of the general results on the structure of finite Hopf algebras and their automorphisms to the study of automorphisms of Drinfeld doubles of groups.  Letting $G=C\times H$ as before, taking the field to be the complex numbers, and defining $\widehat{H}$ to be the group of linear characters of $H$, then under the isomorphisms $\D(C)\cong\mathbb{C}(\widehat{C}\times C)$ and $\widehat{C}\times C\cong C^2$ the result can be stated as
\[\Hopfaut(\D(C\times H)) \cong \ \begin{pmatrix} \operatorname{Aut}(C^2)&\Hopfmor(\D(H),\mathbb{C}C^2)\\
	\operatorname{Hom}(C^2,\widehat{H}\times Z(H))&\Hopfaut(\D(H))\end{pmatrix}.\]
The only term not explicitly determined by \cite{K14} or standard methods for finite abelian groups is $\Hopfmor(\D(H),\mathbb{C}C^2)$.  In this case the morphisms can be described entirely in terms of group homomorphisms and central subgroups of $G$ satisfying certain relations \cite{ABM:CBPHA,K14}, so the description is not a significant problem.  In \cref{ex:dihedrals} we completely describe $\Hopfaut(\D(D_{2n}))$ where $D_{2n}$ is the dihedral group of order $2n$, for the case $n\equiv 2\bmod 4$ and $n>2$.  This is precisely when there is an isomorphism $D_{2n}\cong \Z_2\times D_n$.  From this we can easily provide a formula for the order of $\Hopfaut(\D(D_{2n}))$.  In particular we find that $\Hopfaut(\D(D_{12}))$ has order $1152=2^7 3^2$.

%
%
\section{Preliminaries and notation}
Throughout the paper, $\B$ is an abelian braided tensor category with braiding
$\brd$; we will assume that $\B$ is strict, backed up by the
well-known coherence theorems. Algebras,
coalgebras, bialgebras, Hopf algebras are in $\B$. All undecorated $\Hopfmor$s, $\Hopfend$s, etc. will be for morphisms of Hopf algebras or groups, as appropriate.  We will use
the following graphical notations to do computations in $\B$: The
braiding is
$$\brd_{VW}=\gbeg23\got1V\got1W\gnl\gbr\gnl\gob1W\gob1V\gend\qquad
  \text{ and }\qquad
  \brd_{VW}\inv=\gbeg23\got1W\got1V\gnl\gibr\gnl\gob1V\gob1W\gend.$$
We shall say that the objects $V$ and $W$ are \emph{unbraided} if
$\brd_{VW}=\brd_{WV}\inv$.

Multiplication and unit of an algebra $A$, and comultiplication and
counit of a coalgebra $C$ are
$$\nabla_A=\gbeg 23\got1A\got1A\gnl\gmu\gnl\gob2A\gend,
  \qquad
  \eta_A=\gbeg13\gnl\gu1\gnl\gob1A\gend,
  \qquad
  \Delta_C=\gbeg23\got2C\gnl\gcmu\gnl\gob1C\gob1C\gend,
  \qquad
  \epsilon_C=\gbeg13\got1C\gnl\gcu1\gnl\gend.
$$
The antipode of a Hopf algebra and, if it exists, its inverse are
$$S=\gbeg15\got1H\gnl\gcl1\gnl\gmp+\gnl\gcl1\gnl\gob1H\gend\qquad\text{ and
}\qquad
S\inv=\gbeg15\got1H\gnl\gcl1\gnl\gmp-\gnl\gcl1\gnl\gob1H\gend.$$

In order to have a straightforward notion of Hopf subalgebra and
quotient Hopf algebra of a given Hopf algebra, we shall assume that
tensor products in $\B$ are exact.

An object in $\B$ satisfies the ascending chain condition on
subobjects if and only if it satisfies the descending chain condition
on quotient objects, by which we understand the descending chain
condition on subobjects in the opposite category. For Hopf algebras we
will use the descending chain conditions on Hopf-subalgebras
and on quotient Hopf algebras.  This is done since Hopf algebras which are artinian as algebras are finite dimensional \cite{LiuZh07:AHFD}.  When a Hopf algebra satisifies the descending chain conditions on both Hopf-subalgebras and quotient Hopf algebras, we simply say that it satisfies both chain conditions.

If $f\colon H\to G$ is a Hopf algebra morphism, we define the right
and left $f$-coinvariant subobjects of $H$ as being the equalizers
\begin{gather*}
  \xymatrix{0\ar[r]&\rcofix Hf\ar[r]&H\ar@<0.5ex>[rr]^-{(H\ot
      f)\Delta}\ar@<-0.5ex>[rr]_-{H\ot\eta}&&H\ot G}\\
  \xymatrix{0\ar[r]&\lcofix Hf\ar[r]&H\ar@<0.5ex>[rr]^-{(f\ot
      H)\Delta}\ar@<-0.5ex>[rr]_-{\eta\ot H}&&G\ot H}
\end{gather*}
And dually, the left and right invariant quotients by coequalizers
\begin{gather*}
  \xymatrix{H\ot G\ar@<0.5ex>[rr]^-{\nabla(f\ot
      G)}\ar@<-0.5ex>[rr]_-{\epsilon\ot G}&&G\ar[r]&H\backslash G\ar[r]&0}\\
  \xymatrix{G\ot H\ar@<0.5ex>[rr]^-{\nabla(G\ot
      f)}\ar@<-0.5ex>[rr]_-{G\ot\epsilon}&&G\ar[r]&G/H
    \ar[r]&0}\end{gather*}

We note that the coinvariant subobjects are subalgebras of $H$, and
the invariant quotients are quotient coalgebras of $G$.

We will say a Hopf algebra is abelian if it is both commutative and cocommutative. In the category of vector spaces over a field $\kk$ of characteristic zero, such Hopf algebras are precisely group algebras of abelian groups, up to a separable field extension \cite[Theorem 2.3.1]{Mon:HAAR}.  We will say a Hopf algebra is non-abelian when it is not abelian.

%
%
\section{Commuting and Cocommuting morphisms}
\label{sec:comm-cocomm-morph}
In this section, we formulate an obvious commutation condition for
morphisms to an algebra (for ordinary algebras it just means that
elements in the respective images commute) and its dual, and we
collect equally obvious consequences that will be useful in later calculations. We note that for each and every fact on Hopf algebras in a braided category there is a dual fact. We will not always state, but still freely use the duals of our statements

Let $A$ be an algebra, $V,W\in\B$, and $f\colon V\to A,g\colon W\to A$
morphisms in $\B$. We say that $f$ and $g$ multiplication commute and
write $f\comm g$ if
$\nabla(g\ot f)=\nabla(f\ot g)\brd(=\nabla\brd(g\ot f))$, or
graphically\[\commute fg=\gbeg23\gbmp g\gbmp f\gnl\gbr\gnl\gmu\gend.\]

Dually, two morphisms $f\colon C\to V$ and $g\colon C\to W$ from a
coalgebra $C$ in $\B$ comultiplication commute, or cocommute for
short, and write $f\cocomm g$ if \[\cocommute fg=\gbeg23\gcmu\gnl\gbr\gnl\gbmp g\gbmp f\gend.\] 

We say that $f,g$ bicommute if both $f\comm g$ and $f\cocomm g$.

If $A$ and $B$ are algebras in $\B$, then the natural maps $f\colon
A\to A\ot B$ and $g\colon B\to A\ot B$ satisfy $f\comm g$, but they
only satisfy $g\comm f$ if $A$ and $B$ are unbraided: In fact 
\begin{align*}
  \gbeg25
  \got1A\got1B\gnl
  \gcl1\gcl1\gnl
\gbmp f\gbmp g\gnl
\gmu\gnl
\gob2{A\ot B}\gend
&=
\gbeg25
\got1A\got1B\gnl
\gcl3\gcl3\gnl\gnl\gnl
\gob1A\gob1B\gend
&\text{and}&&
  \gbeg25
  \got1B\got1A\gnl
  \gcl1\gcl1\gnl
\gbmp g\gbmp f\gnl
\gmu\gnl
\gob2{A\ot B}\gend
&=
\gbeg25
\got1B\got1A\gnl
\gcl1\gcl1\gnl
\gbr\gnl
\gcl1\gcl1\gnl
\gob1A\gob1B\gend.
\end{align*}

\begin{Lem}\label{usefulsimplecommutationfacts}
  Let $A$ be an algebra, $C$ a coalgebra, and $U,V,W,X,Y$ objects in $\B$.
  \begin{enumerate}
  \item Let $f\colon U\to A$, $g\colon V\to A$ and
    $h\colon W\to A$. 
    \begin{enumerate}
    \item If $f\comm g$ and $f\comm h$, then $f\comm(\nabla(g\ot h))$.
    \item If $f\comm h$ and $g\comm h$, then $(\nabla(f\ot g))\comm
      h$.
    \item If $f\comm g$, then $fa\comm gb$ for any $a\colon X\to U$
      and $b\colon Y\to V$.\label{compositescommute}
    \end{enumerate}
    \item Let $f,g,h\colon C\to A$.
      \begin{enumerate}
      \item If $f\comm g$ and $f\comm h$ then $f\comm(g*h)$.
      \item If $f\comm h$ and $g\comm h$ then $(f*g)\comm h$.
       \item If $f\comm g$ and $g\cocomm f$, then $f*g=g*f$.
      \end{enumerate}
    \item Let $f,g\colon C\to A$.
      \begin{enumerate}
      \item If $C$ is a bialgebra, $f,g$ are algebra morphisms, and
        $f\comm g$, then $f*g$ is an algebra morphism.
      \item If $A,C$ are bialgebras, $f,g$ are bialgebra morphisms,
        $f\comm g$ and $f\cocomm g$, then $f*g$ is a bialgebra morphism.
			\item If $A$ is a bialgebra, $C$ a Hopf algebra, and $f,g$ are unital coalgebra morphisms, then $f\cocomm g$ $\iff$ $f*g$ is a coalgebra morphism.
      \end{enumerate}
  \end{enumerate}
\end{Lem}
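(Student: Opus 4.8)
First, the counit condition is automatic: since $A$ is a bialgebra and $f,g$ preserve counits, $\epsilon_A\circ(f*g)=(\epsilon_A\otimes\epsilon_A)(f\otimes g)\Delta_C=(\epsilon_C\otimes\epsilon_C)\Delta_C=\epsilon_C$. So the content lies entirely in the comultiplicative identity $\Delta_A\circ(f*g)=\bigl((f*g)\otimes(f*g)\bigr)\Delta_C$, and I would analyze it inside the convolution monoid $\Hom_{\B}(C,A\otimes A)$, where $A\otimes A$ carries its braided tensor‑product algebra structure. Write $\iota_1,\iota_2\colon A\to A\otimes A$ for the canonical algebra embeddings $a\mapsto a\otimes1$ and $a\mapsto1\otimes a$. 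The argument rests on three elementary observations about the braided product: (i) for every $k\colon C\to A$ one has $\iota_1k*\iota_2k=(k\otimes k)\Delta_C$ (the braiding plays no role, since one tensor factor in each product is a unit); (ii) $\Delta_A$, $\iota_1$, $\iota_2$ are algebra maps, hence convolution‑multiplicative, so $\Delta_A\circ(f*g)=\Delta_Af*\Delta_Ag$ and $\iota_i\circ(f*g)=\iota_if*\iota_ig$; (iii) a one‑line computation with the braided product and naturality of $\brd$ gives $\iota_2f*\iota_1g=\brd_{A,A}(f\otimes g)\Delta_C$ and $\iota_1g*\iota_2f=(g\otimes f)\Delta_C$, so that $f\cocomm g$ is by definition exactly the equation $\iota_2f*\iota_1g=\iota_1g*\iota_2f$.

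Combining (i) with the hypothesis that $f,g$ are coalgebra morphisms yields $\Delta_Af=\iota_1f*\iota_2f$ and $\Delta_Ag=\iota_1g*\iota_2g$; feeding these through (ii) gives $\Delta_A(f*g)=\iota_1f*\iota_2f*\iota_1g*\iota_2g$ while $\bigl((f*g)\otimes(f*g)\bigr)\Delta_C=\iota_1(f*g)*\iota_2(f*g)=\iota_1f*\iota_1g*\iota_2f*\iota_2g$. Therefore
\begin{equation}
f*g \text{ is a coalgebra morphism}\iff
\iota_1f*(\iota_2f*\iota_1g)*\iota_2g=\iota_1f*(\iota_1g*\iota_2f)*\iota_2g.
\tag{$\ast$}
\end{equation}
The implication ($\Leftarrow$) is now immediate: if $f\cocomm g$ then by (iii) the bracketed factors on the two sides of $(\ast)$ coincide, so the displayed equality holds. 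Note this direction uses neither the antipode of $C$ nor the unitality of $f,g$, only that $f,g$ are counit‑preserving coalgebra morphisms and $A$ a bialgebra.

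For ($\Rightarrow$) one must cancel the ``collar'' $\iota_1f$ on the left and $\iota_2g$ on the right of the identity in $(\ast)$ to extract $\iota_2f*\iota_1g=\iota_1g*\iota_2f$, i.e. $f\cocomm g$. This is precisely where the hypotheses that $C$ is a Hopf algebra and that $f,g$ are unital are used, and I expect it to be the crux of the proof. The natural plan is to convolve $(\ast)$ on the left with $\iota_1\circ(f\circ S_C)$ and on the right with $\iota_2\circ(g\circ S_C)$, and then to use the antipode identities $\nabla_C(S_C\otimes\id_C)\Delta_C=\eta_C\epsilon_C$ and $\nabla_C(\id_C\otimes S_C)\Delta_C=\eta_C\epsilon_C$ together with the normalizations $f\circ\eta_C=\eta_A=g\circ\eta_C$ to collapse the resulting counit terms and reduce the collars to identities. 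The delicate point — and the reason a blunt appeal to convolution invertibility does not suffice — is that a unital coalgebra morphism out of a Hopf algebra need not itself be convolution‑invertible (its image may consist of non‑invertible, grouplike‑type elements), so $\iota_1f$ and $\iota_2g$ are in general not invertible in $\Hom_{\B}(C,A\otimes A)$; one must instead verify that the specific elements occurring in $(\ast)$ still admit the required one‑sided cancellation, exploiting, for instance, that the cocommutativity defect $\brd_{A,A}(f\otimes g)\Delta_C-(g\otimes f)\Delta_C$ has zero composite with $\epsilon_A$ in either tensor leg. Everything apart from this cancellation step is routine bookkeeping with the braided tensor product.
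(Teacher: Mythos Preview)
The paper gives no proof of this lemma, treating all parts as routine. Your argument for the $(\Leftarrow)$ direction of (iii)(c) is complete and correct: the reduction to the identity $(\ast)$ via the embeddings $\iota_1,\iota_2$ and the identification of $f\cocomm g$ with $\iota_2 f*\iota_1 g=\iota_1 g*\iota_2 f$ is clean, and from that the implication is immediate.

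The $(\Rightarrow)$ direction, however, is not proved, and your ``natural plan'' does not work. Convolving on the left with $\iota_1(f\circ S_C)$ gives $\iota_1(fS_C)*\iota_1 f=\iota_1\bigl((fS_C)*f\bigr)$, and $(fS_C)*f$ sends $c\mapsto f(S(c_{(1)}))f(c_{(2)})$; this equals $\eta_A\epsilon_C$ only if $f$ is \emph{multiplicative}, which is precisely what is not assumed. The antipode identity $\nabla_C(S_C\otimes\id_C)\Delta_C=\eta_C\epsilon_C$ lives entirely in $C$ and cannot be transported through a map that is merely a unital coalgebra morphism. So neither $\iota_1 f$ nor $\iota_2 g$ acquires a one-sided convolution inverse this way, and the collars in $(\ast)$ do not cancel. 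You correctly diagnose the obstruction, but the fallback you sketch --- that the defect $\iota_2 f*\iota_1 g-\iota_1 g*\iota_2 f$ has zero composite with $\epsilon_A$ in either leg --- is true but does not by itself yield cancellation: applying $\epsilon_A\otimes\id_A$ or $\id_A\otimes\epsilon_A$ to either side of $(\ast)$ collapses both to $f*g$, so no information is gained. A genuine argument for the converse still needs to be supplied; as it stands this direction is a gap, not ``routine bookkeeping''.
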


Note that $f\comm g$ is not necessarily
equivalent to $g\comm f$ in the braided setting. The first part of the following result says, however, that the two properties are equivalent for Hopf algebras with sufficiently well-behaved antipodes. On the other hand, the second part says that if both properties are fulfilled then either the braiding is close to being a symmetry, or the morphisms are close to being trivial.
\begin{Prop}
  Let $H,K,$ and $A$ be Hopf algebras, and $f\colon H\to A$, $g\colon K\to A$
  Hopf algebra morphisms.
\begin{enumerate}
\item If $f\comm g$, and if the antipode of $A$ is a monomorphism   or the antipodes of $H$ and $K$ are epimorphisms, then $g\comm f$.
  \item If $f\comm g$ and $g\comm f$, then
  \begin{equation*}
    \gbeg25
    \got1H\got1K\gnl
    \gcl1\gcl1\gnl
    \gbmp f\gbmp g\gnl
    \gbr\gnl
    \got1A\got1A\gend
    =
    \gbeg25
    \got1H\got1K\gnl
    \gcl1\gcl1\gnl
    \gbmp f\gbmp g\gnl
    \gibr\gnl
    \got1A\got1A\gend
  \end{equation*}
\end{enumerate}
\end{Prop}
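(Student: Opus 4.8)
The plan is to prove both parts by graphical calculus, with the antipode playing the decisive role.

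\emph{Part (i).} I would first rewrite $f\comm g$ as $\nabla_A(g\ot f)=\nabla_A(f\ot g)\brd_{K,H}$, hence (inverting the braiding) as $\nabla_A(f\ot g)=\nabla_A(g\ot f)\brd_{K,H}\inv$, and note that, using this once more, the target $g\comm f$ — i.e. $\nabla_A(f\ot g)=\nabla_A(g\ot f)\brd_{H,K}$ — is equivalent to the invariance of $\nabla_A(f\ot g)$ under the double braiding $\brd_{K,H}\brd_{H,K}$ of $H\ot K$. To prove the latter I would insert $\eta_K\epsilon_K=\nabla_K(S_K\ot K)\Delta_K$ into the $K$-leg (or $\eta_H\epsilon_H$ into the $H$-leg), resolve the braiding of the $f$-strand past the three resulting $g$-strands with the hexagon axioms, straighten each crossing using $f\comm g$ — together with its consequences $f\comm gS_K$ and $fS_H\comm g$ furnished by \cref{usefulsimplecommutationfacts} — and collapse with the Hopf axioms of $H,K,A$; what remains should be the desired identity with $S_A$ postcomposed (respectively with $S_H\ot S_K$ precomposed), so that $S_A$ monic — equivalently $S_H,S_K$ epic — permits the final cancellation. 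The main obstacle is to lay out this computation so the cancellation is transparent; invertibility of an antipode is genuinely required, as otherwise $f\comm g$ and $g\comm f$ are not equivalent.

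\emph{Part (ii).} Here $f\comm g$ together with $g\comm f$ already gives $\nabla_A(f\ot g)=\nabla_A(g\ot f)\brd_{H,K}=\nabla_A(f\ot g)\brd_{K,H}\brd_{H,K}$ with no hypothesis on antipodes, so $\nabla_A(f\ot g)$ is invariant under the double braiding; by naturality of $\brd$ this says exactly $\nabla_A\brd_{A,A}(f\ot g)=\nabla_A(f\ot g)=\nabla_A\brd_{A,A}\inv(f\ot g)$, i.e. the two morphisms $\brd_{A,A}(f\ot g)$ and $\brd_{A,A}\inv(f\ot g)$ from $H\ot K$ to $A\ot A$ agree after post-composition with $\nabla_A$. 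To upgrade this to genuine equality I would use that $f$ is a Hopf morphism and that $\beta:=(\nabla_A\ot A)(A\ot\Delta_A)$ is an automorphism of $A\ot A$ (with inverse involving $S_A$) satisfying $\beta(g\ot f)=(\nabla_A(g\ot f)\ot f)(K\ot\Delta_H)$: writing $\brd_{A,A}(f\ot g)=(g\ot f)\brd_{H,K}$ and pushing $\brd_{H,K}$ through $\beta$ by the hexagon axioms gives
\[
\brd_{A,A}(f\ot g)=\beta\inv\bigl(\nabla_A(g\ot f)\brd_{H,K}\ot f\bigr)(H\ot\brd_{H,K})(\Delta_H\ot K),
\]
whose inner term is $\nabla_A(f\ot g)$ by $g\comm f$; the identical manipulation applied to $\brd_{A,A}\inv(f\ot g)=(g\ot f)\brd_{K,H}\inv$ substitutes $\brd_{K,H}\inv$ for $\brd_{H,K}$ and invokes $f\comm g$ to again produce $\nabla_A(f\ot g)$ there. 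Re-expanding $\nabla_A(f\ot g)=\nabla_A\circ(f\ot g)$ recasts both identities as fixed points of the single additive operator $R(\psi)=\beta\inv(\nabla_A\ot A)(f\ot\psi)(\Delta_H\ot K)$, so the difference $\phi$ of the two morphisms satisfies $R(\phi)=\phi$ together with $\nabla_A\phi=0$ and $(A\ot\epsilon_A)\phi=(\epsilon_A\ot A)\phi=0$; iterating $R$ and controlling the iteration — by induction along a filtration of $H$, or via the chain conditions — will force $\phi=0$, which is the assertion. Pinning down this last step, i.e. passing from equality of products to equality of the $A\ot A$-valued maps, is where I expect the real difficulty to lie.
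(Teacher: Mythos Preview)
Your Part~(ii) contains a genuine gap. You correctly observe that $f\comm g$ and $g\comm f$ together force $\nabla_A\brd_{A,A}(f\ot g)=\nabla_A(f\ot g)=\nabla_A\brd_{A,A}^{-1}(f\ot g)$, but your plan to upgrade this to equality before $\nabla_A$ relies on an iteration of your operator $R$ ``by induction along a filtration of $H$, or via the chain conditions.'' No such hypotheses are present: the proposition is stated for arbitrary Hopf algebras in $\B$, with no finiteness or chain conditions whatsoever. Your $R$ is not nilpotent in general, and nothing forces the fixed-point argument to terminate. As you yourself flag, this is exactly the step you cannot pin down---and it cannot be pinned down along these lines.

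The paper's argument sidesteps the cancellation problem entirely. Rather than trying to invert $\nabla_A$, it first \emph{doubles}: consider the map $H\ot K\to A\ot A$ given by applying $\Delta_H\ot\Delta_K$, crossing the two inner strands by $X\in\{\brd,\brd^{-1}\}$, applying $f\ot g\ot f\ot g$, and multiplying in pairs. A short graphical computation using the bialgebra axiom of $A$ together with both $f\comm g$ and $g\comm f$ shows this is independent of $X$. Then one \emph{un-doubles} exactly: convolve on the left output with $fS_H$ and on the right output with $gS_K$ (these are the convolution inverses of $f$ and $g$), and the antipode axiom collapses everything to $(g\ot f)\circ X$. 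Independence of this from $X$ is the assertion. The point you were missing is that the antipode provides an \emph{exact} one-step inverse to the doubling, so no iteration or limiting process is needed.

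For Part~(i) your outline is more involved than necessary and, as written, I cannot see that inserting $\eta\epsilon=\nabla(S\ot\id)\Delta$ and tracking three $g$-strands leads cleanly to the claimed cancellation. The paper's proof is a single chain using only the anti-multiplicativity of the antipode: from $\nabla_A(S_A\ot S_A)=S_A\nabla_A\brd^{-1}$ one gets $\nabla_A(S_Af\ot S_Ag)=S_A\nabla_A\brd^{-1}(f\ot g)$, the hypothesis $f\comm g$ removes the $\brd^{-1}$, and rewriting $S_A\nabla_A=\nabla_A\brd(S_A\ot S_A)$ reintroduces a $\brd$. Pushing the antipodes through $f$ and $g$ gives $\nabla_A(f\ot g)(S_H\ot S_K)=\nabla_A\brd(f\ot g)(S_H\ot S_K)$, which is $g\comm f$ precomposed with $S_H\ot S_K$; the epi hypothesis on $S_H,S_K$ finishes, and a parallel variant handles the $S_A$-mono case.
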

\begin{proof}
For the first claim, we calculate
\begin{equation*}
  \gbeg26
  \got1H\got1K\gnl
  \gcl1\gcl1\gnl
  \gmp+\gmp+\gnl
  \gbmp f\gbmp g\gnl
  \gmu\gnl
  \gob2A\gend
  =
  \gbeg26
  \got1H\got1K\gnl
  \gcl1\gcl1\gnl
  \gbmp f\gbmp g\gnl
  \gmp+\gmp+\gnl
  \gmu\gnl
  \gnl
  \gob2A\gend
  =
  \gbeg37
  \got1H\gvac1\got1K\gnl
  \gcl1\gvac1\gcl1\gnl
  \gbmp f\gvac1\gbmp g\gnl
  \gibbr31\gnl
  \gwmu3\gnl
  \gvac1\gmp+\gnl
  \gob3A\gend
  =
  \gbeg36
  \got1H\gvac1\got1K\gnl
  \gcl1\gvac1\gcl1\gnl
  \gbmp f\gvac1\gbmp g\gnl
  \gwmu3\gnl
  \gvac1\gmp+\gnl
  \gob3A\gend
  =
  \gbeg27
  \got1H\got1K\gnl
  \gcl1\gcl1\gnl
  \gbmp f\gbmp g\gnl
  \gmp+\gmp+\gnl
  \gbr\gnl
  \gmu\gnl
  \gob2A\gend  
  =
  \gbeg27
  \got1H\got1K\gnl
  \gcl1\gcl1\gnl
  \gmp+\gmp+\gnl
  \gbmp f\gbmp g\gnl
  \gbr\gnl
  \gmu\gnl
  \gob2A\gend,
\end{equation*}
which implies $g\comm f$ if the antipodes of $H$ and $K$ are epimorphisms. A similar argument shows the same if the antipode of $A$ is a monomorphism.

We now turn to the second claim.
  
  First, we note that
  \begin{multline*}
    \gbeg46
    \got2H\got2K\gnl
    \gcmu\gcmu\gnl
    \gcl1\gibr\gcl1\gnl
    \gbmp f\gbmp g\gbmp f\gbmp g\gnl
    \gmu\gmu\gnl
    \gob2A\gob2A\gend
    =
    \gbeg47
    \got2H\got2K\gnl
    \gcmu\gcmu\gnl
    \gcl1\gibr\gcl1\gnl
    \gbmp f\gbmp g\gbmp f\gbmp g\gnl
    \gibr\gibr\gnl
    \gmu\gmu\gnl
    \gob2A\gob2A\gend
    =
    \gbeg58
    \gvac1\got1H\gvac1\got1K\gnl
    \gvac1\gcl1\gvac1\gcl1\gnl
    \gvac1\gbmp f\gvac1\gbmp g\gnl
    \gwcmh324\gvac{-1}\gwcmh324\gnl
    \gcn1122\gibbrh2124\gcn0122\gnl
    \gibbrh2124\gibbrh2124\gnl
    \gwmuh324\gvac{-1}\gwmuh324\gnl
    \gvac1\gob1A\gvac1\gob1A\gend
    =
    \gbeg59
    \gvac1\got1H\gvac1\got1K\gnl
    \gvac1\gcl1\gvac1\gcl1\gnl
    \gvac1\gbmp f\gvac1\gbmp g\gnl
    \gibbrh4237\gnl
    \gnl
    \gwcmh324\gvac{-1}\gwcmh324\gnl
    \gcn1122\gbbrh2124\gcn0122\gnl
    \gwmuh324\gvac{-1}\gwmuh324\gnl
    \gvac1\gob1A\gvac1\gob1A\gend
    =\\\\=
    \gbeg27
    \got1H\got1K\gnl
    \gcl1\gcl1\gnl
    \gbmp f\gbmp g\gnl
    \gibr\gnl
    \gmu\gnl
    \gcmu\gnl
    \gob1A\gob1A\gend
    =
    \gbeg26
    \got1H\got1K\gnl
    \gcl1\gcl1\gnl
    \gbmp f\gbmp g\gnl
    \gmu\gnl
    \gcmu\gnl
    \gob1A\gob1A\gend
    =
    \gbeg57
    \gvac1\got1H\gvac1\got1K\gnl
    \gvac1\gcl1\gvac1\gcl1\gnl
    \gvac1\gbmp f\gvac1\gbmp g\gnl
    \gwcmh324\gvac{-1}\gwcmh324\gnl
    \gcn1122\gbbrh2124\gcn0122\gnl
    \gwmuh324\gvac{-1}\gwmuh324\gnl
    \gvac1\gob1A\gvac1\gob1A\gend
    =
    \gbeg46
    \got2H\got2H\gnl
    \gcmu\gcmu\gnl
    \gcl1\gbr\gcl1\gnl
    \gbmp f\gbmp g\gbmp f\gbmp g\gnl
    \gmu\gmu\gnl
    \gob2A\gob2A\gend
\end{multline*}
  In other words 
  \begin{equation*}
    \gbeg46
    \got2H\got2H\gnl
    \gcmu\gcmu\gnl
    \gcl1\gdnot X\glmptb\grmptb\gcl1\gnl
    \gbmp f\gbmp g\gbmp f\gbmp g\gnl
    \gmu\gmu\gnl
    \gob2A\gob2A\gend
  \end{equation*}
  does not depend on the choice of
  $X\in\left\{\gbeg21\gbr\gend,\gbeg21\gibr\gend\right\}$. But since
  $f\circ S$ and $g\circ S$ are convolution inverse to $f$ and $g$, respectively, we have
  \begin{equation*}
    \gbeg68
    \got3H\got3K\gnl
    \gwcmh314\gwcmh325\gnl
    \gcl1\gcmu\gcmu\gcl1\gnl
    \gmp+\gcl1\gdnot X\glmptb\grmptb\gcl1\gmp+\gnl
    \gbmp f\gbmp f\gbmp g\gbmp f\gbmp g\gbmp g\gnl
    \gcl1\gmu\gmu\gcl1\gnl
    \gwmuh314\gwmuh325\gnl
    \gob3A\gob3A\gend
    =
    \gbeg68
    \got3H\got3K\gnl
    \gwcmh325\gwcmh314\gnl
    \gcmu\gcl1\gcl1\gcmu\gnl
    \gmp+\gcl1\gdnot X\glmptb\grmptb\gcl1\gmp+\gnl
    \gbmp f\gbmp f\gbmp g\gbmp f\gbmp g\gbmp g\gnl
    \gmu\gcl1\gcl1\gmu\gnl
    \gwmuh325\gwmuh314\gnl
    \gob3A\gob3A\gend
    =
    \gbeg46
    \got2H\got2K\gnl
    \gcmu\gcmu\gnl
    \gcu1\gdnot X\glmptb\grmptb\gcu1\gnl
    \gu1\gbmp g\gbmp f\gu1\gnl
    \gmu\gmu\gnl
    \gob2A\gob2A\gend
    =
    \gbeg26
    \got1H\got1K\gnl
    \gcl1\gcl1\gnl
    \gdnot X\glmptb\grmptb\gnl
    \gbmp g\gbmp f\gnl
    \gcl1\gcl1\gnl
    \gob1A\gob1A\gend
  \end{equation*}
  That the latter expression does not depend on the choice of $X$ is
  the claim.
\end{proof}

As special cases one recovers two known facts that show how badly usual Hopf algebra constructions behave in a ``truly braided'' tensor category: A Hopf algebra cannot be commutative (or cocommutative) as a (co)algebra in $\mathcal B$ unless the braiding on the Hopf algebra is an involution \cite{Sch:BHABC}, and the tensor product of two Hopf algebras cannot be a Hopf algebra unless the two factors are unbraided.

%
%
\section{Normal endomorphisms}
\label{sec:normal-endomorphisms}

Recall that the left adjoint action and the left coadjoint coaction of
a Hopf algebra $H$ on itself are
\begin{align*}
  \gbeg25
  \got1H\got1H\gnl
  \gcl1\gcl1\gnl
  \gdnot{\operatorname{ad}}\glmpt\grmptb\gnl
  \gvac1\gcl1\gnl
  \gvac1\gob1H\gend
  &=
  \gbeg37
  \got2H\got1H\gnl
  \gcmu\gcl1\gnl
  \gcl3\gbr\gnl
  \gvac1\gcl1\gmp+\gnl
  \gvac1\gmu\gnl
  \gwmuh314\gnl
  \gob3H\gend
  &
  \gbeg25
  \gvac1\got1H\gnl
  \gvac1\gcl1\gnl
  \gdnot{\operatorname{coad}}\glmpb\grmptb\gnl
  \gcl1\gcl1\gnl
  \gob1H\gob1H\gend
  &=
  \gbeg37
  \got3H\gnl
  \gwcmh314\gnl
  \gcl3\gcmu\gnl
  \gvac1\gcl1\gmp+\gnl
  \gvac1\gbr\gnl
  \gmu\gcl1\gnl
  \gob2H\gob1H\gend
\end{align*}

We note that the adjoint action is characterized by a twisted commutativity condition:
\begin{equation}
  \label{eq:3}
  \gbeg36
  \got 2H\got1H\gnl
  \gcmu\gcl1\gnl
  \gcl1\gbr\gnl
  \gdnot{\operatorname{ad}}\glmpt\grmptb\gcl1\gnl
  \gvac1\gmu\gnl
  \gvac1\gob2H\gend
  =
  \gbeg25
  \got1H\got1H\gnl
  \gcl1\gcl1\gnl
  \gmu\gnl
  \gcn2122\gnl
  \gob2H\gend
\end{equation}

\begin{Def}
  Let $f\colon H\to H$ be a morphism in $\B$, with $H$ a Hopf algebra.
  \begin{enumerate}
  \item $f$ is normal if it is left $H$-linear with respect to the
    adjoint action.
  \item $f$ is conormal if it is left $H$-colinear with respect to the
    coadjoint coaction.
	\item $f$ is binormal if it is both normal and conormal.
  \end{enumerate}
\end{Def}
For group algebras considered in the category of $\mathbb{C}$-vector spaces, the definition of a normal morphism agrees with the one used in group theory \cite{Rot:Book}.  Since group algebras are cocommutative, every group endomorphism is trivially conormal.  We will be primarily concerned with normal algebra morphisms, conormal coalgebra morphisms, and binormal bialgebra morphisms.
\begin{Lem}
  Let $f\colon H\to H$ be an endomorphism of the Hopf algebra $H$.
  \begin{enumerate}
  \item The following are equivalent:
    \begin{enumerate}
    \item $f$ is normal.\label{equiv:normal}
    \item $f\comm((fS)*\id_H)$.\label{equiv:comm}
    \item $(fS)*\id_H$ is an algebra morphism.\label{equiv:algmor}
    \end{enumerate}
	\item The following are equivalent:
		\begin{enumerate}
    \item $f$ is binormal.
    \item $f\cocomm((fS)*\id_H)$ and $f\comm ((fS)*\id_H)$.
    \item $(fS)*\id_H$ is a bialgebra morphism
    \end{enumerate}
  \end{enumerate}
\end{Lem}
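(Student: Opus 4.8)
Throughout write $g:=(fS)*\id_H$ and $\bar f:=fS=Sf$, the convolution inverse of $f$ in $\Hom(H,H)$. The driving elementary facts are: $g$ is unital and counital (immediate from $S$, $f$, $\epsilon$ preserving unit and counit); and, since $f*\bar f=\eta_H\epsilon_H=\bar f*f$, associativity of convolution gives $f*g=(f*\bar f)*\id_H=\id_H$ (and dually $(\id_H*\bar f)*f=\id_H$). The plan is to prove the three conditions of (i) equivalent by the cycle (a)$\Rightarrow$(b)$\Rightarrow$(c)$\Rightarrow$(a), and then obtain (ii) from (i) and its dual.

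For (a)$\Rightarrow$(b), start from $\nabla_H\circ(g\otimes f)$, i.e. the word $\bar f(h_{(1)})\,h_{(2)}\,f(x)$, and multiply on the right by $S(h_{(3)})h_{(4)}$, which by the antipode axiom equals $\epsilon(h_{(3)})$. The sub-word $h_{(2)}\,f(x)\,S(h_{(3)})$ is $\operatorname{ad}(h_{(2)}\otimes f(x))$; normality replaces it by $f(\operatorname{ad}(h_{(2)}\otimes x))=f(h_{(2)}xS(h_{(3)}))$, then multiplicativity of $f$ fuses the $f$'s into $f(S(h_{(1)})h_{(2)}\cdot xS(h_{(3)}))$, and the other antipode axiom collapses $S(h_{(1)})h_{(2)}$ to $\epsilon(h_{(1)})$, leaving $\nabla_H\circ(f\otimes g)\circ\brd_{H,H}$, that is $f\comm g$ (the braidings are carried along automatically in the graphical calculus). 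For (b)$\Rightarrow$(c): since $g$ is unital it suffices to check $g\circ\nabla_H=\nabla_H\circ(g\otimes g)$; using braided anti-multiplicativity of $S$, multiplicativity of $f$, and the regrouping $\bar f(a_{(1)})a_{(2)}=g(a)$, one finds $g(ab)=\bar f(b_{(1)})\,g(a)\,b_{(2)}$ against $g(a)g(b)=g(a)\,\bar f(b_{(1)})\,b_{(2)}$, and these coincide because $f\comm g$ implies $\bar f\comm g$ (\cref{usefulsimplecommutationfacts}), which lets $\bar f(b_{(1)})$ pass across $g(a)$ inside the multiplication.

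For (c)$\Rightarrow$(a), and hence (i): as $g$ is a unital algebra morphism, applying $g$ to the antipode axioms $h_{(1)}S(h_{(2)})=\epsilon(h)=S(h_{(1)})h_{(2)}$ shows $g\circ S$ is a two-sided convolution inverse of $g$, so $f*g=\id_H$ gives $f=\id_H*(g\circ S)$; moreover, expanding $g(ab)=g(a)g(b)$ as above and then cancelling a trailing factor via $b_{(2)}S(b_{(3)})=\epsilon(b_{(2)})$ yields $(fS)\comm g$. Now invoke the characterization \eqref{eq:3} of $\operatorname{ad}$: it is enough to prove $f(\operatorname{ad}(h_{(1)}\otimes x))\,h_{(2)}=h\,f(x)$, and the left side equals $f(h_{(1)})\,f(x)\,g(h_{(2)})$; commuting $f(x)$ past $g(h_{(2)})$ and then collapsing $f(h_{(1)})g(h_{(2)})=(f*g)(h)=h$ turns it into $h\,f(x)$. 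Finally (ii): conormality, $\cocomm$, and the coalgebra-morphism property are the duals of normality, $\comm$, and the algebra-morphism property, while $g=(fS)*\id_H$ is self-dual (using $fS=Sf$); so the dual of (i) reads "$f$ conormal $\iff$ $f\cocomm g$ $\iff$ $g$ a coalgebra morphism," and intersecting with (i) gives "$f$ binormal $\iff$ ($f\comm g$ and $f\cocomm g$) $\iff$ ($g$ an algebra and a coalgebra morphism) $\iff$ $g$ a bialgebra morphism."

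The step I expect to be the real obstacle is the reordering of $f(x)$ past $g(h_{(2)})$ in (c)$\Rightarrow$(a). What (c) supplies directly is only $(fS)\comm g$, i.e. that $\operatorname{Im}(fS)$ commutes with $\operatorname{Im}(g)$, whereas the computation needs $\operatorname{Im}(f)$ to commute with $\operatorname{Im}(g)$. When the antipode of $H$ is bijective — in particular whenever $H$ satisfies the chain conditions and is therefore finite-dimensional — the two are the same and nothing more is needed; in general the extra commutations have to be squeezed out of $f=\id_H*(g\circ S)$ together with \eqref{eq:3}. The only other difficulty is purely clerical: keeping the braidings straight through the diagrammatic manipulations, which is exactly what the graphical calculus is designed for.
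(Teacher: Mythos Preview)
Your approach differs substantially from the paper's. You argue by a cycle (a)$\Rightarrow$(b)$\Rightarrow$(c)$\Rightarrow$(a) using direct Sweedler-style manipulations; the paper instead proves the two equivalences (a)$\Leftrightarrow$(b) and (b)$\Leftrightarrow$(c) separately, each by the same trick: pick an explicit \emph{bijection} on $\B(H\otimes H,H)$ (built from convolution with $fS$, or with $S$ in the second variable) and apply it to both sides of the relevant equation. For (b)$\Leftrightarrow$(c) the bijection sends the multiplicativity equation for $g$ to the equation $\nabla(f\otimes g)\brd(H\otimes S)=\nabla(g\otimes f)(H\otimes S)$, which is $f\comm g$ precomposed with $H\otimes S$; for (a)$\Leftrightarrow$(b) a second bijection sends the two sides of the normality equation directly to the two sides of $f\comm g$. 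The gain is that each implication and its converse fall out simultaneously, and one never has to manufacture the commutation $f\comm g$ from $fS\comm g$ by hand.

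The obstacle you flag in (c)$\Rightarrow$(a) is genuine, and your proposed rescue via $f=\id_H*(gS)$ and \eqref{eq:3} does not obviously close it: expanding $f(x)g(h)$ as $x_{(1)}\,g(S(x_{(2)}))\,g(h)=x_{(1)}\,g(S(x_{(2)})h)$ does not visibly match $g(h')f(x')$. However, you should not lose sleep over this: the paper's own argument for (c)$\Rightarrow$(b) invokes exactly the same hypothesis, explicitly cancelling ``the isomorphism $H\otimes S$'' on the right. So the equivalence as stated tacitly assumes the antipode is an isomorphism in $\B$ (which holds in all the paper's applications), and under that assumption your cycle closes as well. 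What the paper's bijection method buys is a cleaner bookkeeping of the braidings and the observation that (a)$\Leftrightarrow$(b) does \emph{not} need $S$ invertible, isolating the antipode hypothesis to the single step (c)$\Rightarrow$(b); your route through (c)$\Rightarrow$(a) obscures where the hypothesis is actually used.
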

\begin{proof}
  We only show the first part. For the equivalence of
  \cref{equiv:comm} and \cref{equiv:algmor} we apply the bijection
  \begin{equation*}
    \B(H\ot H,H)\ni T\mapsto 
    \gbeg35
    \got1H\got2H\gnl
    \gcl1\gcmu\gnl
    \gdnot T\glmpt\grmptb\gmp+\gnl
    \gvac1\gmu\gnl
    \gvac1\gob2H\gend
    \in\B(H\ot H,H)
  \end{equation*}
to the two sides of the equation expressing multiplicativity of
$g:=fS*\id$. We get
\begin{equation*}
  \gbeg39
  \got1H\got2H\gnl
  \gcl1\gcmu\gnl
  \gmu\gcl1\gnl
  \gcmu\gmp+\gnl
  \gmp+\gcl2\gcl3\gnl
  \gbmp f\gnl
  \gmu\gnl
  \gwmuh325\gnl
  \gob3H\gend
  =
  \gbeg5{10}
  \got3H\got1H\gnl
  \gvac1\gcl1\gvac1\gcmu\gnl
  \gwcmh324\gvac{-1}\gwcmh324\gvac{-1}\gcn1511\gnl
  \gcn1122\gbbrh2124\gcn0122\gnl
  \gwmuh324\gvac{-1}\gwmuh324\gnl
  \gvac1\gmp+\gvac1\gcl2\gnl
  \gvac1\gbmp f\gnl
  \gvac1\gwmu3\gmp+\gnl
  \gvac2\gwmu3\gnl
  \gvac2\gob3H\gend
  =
  \gbeg59
  \got2H\got3H\gnl
  \gcmu\gwcmh314\gnl
  \gmp+\gcl2\gmp+\gcmu\gnl
  \gbmp f\gvac1\gbmp f\gcl1\gmp+\gnl
  \gcl1\gbr\gmu\gnl
  \gbr\gwmuh314\gnl
  \gmu\gvac1\gcl1\gnl
  \gwmuh527\gnl
  \gob5H
  \gend
  =
  \gbeg26
  \got1H\got1H\gnl
  \gcl1\gmp+\gnl
  \gbr\gnl
  \gbmp f\gbmp g\gnl
  \gmu\gnl
  \gob2H\gend 
\end{equation*}
and
\begin{equation*}
  \gbeg36
  \got1H\got2H\gnl
  \gcl1\gcmu\gnl
  \gbmp g\gbmp g\gmp+\gnl
  \gmu\gcl1\gnl
  \gwmuh325\gnl
  \gob3H\gend
  =
  \gbeg36
  \got1H\got2H\gnl
  \gcl1\gcmu\gnl
  \gbmp g\gbmp g\gmp+\gnl
  \gcl1\gmu\gnl
  \gwmuh314\gnl
  \gob3H\gend
  =
  \gbeg25
  \got1H\got1H\gnl
  \gcl1\gmp+\gnl
  \gbmp g\gbmp f\gnl
  \gmu\gnl
  \gob2H\gend
\end{equation*}
that is, the two sides of \cref{equiv:comm}, up to composition with
the isomorphism $H\ot S$.

For the equivalence of \cref{equiv:normal} and \cref{equiv:comm}, we
apply the bijection
\begin{equation*}
  \B(H\ot H,H)\ni T\mapsto
  \gbeg48
  \got3H\got1H\gnl
  \gwcmh314\gcl2\gnl
  \gmp+\gcmu\gnl
  \gbmp f\gcl1\gbr\gnl
  \gcl2\gdnot T\glmptb\grmpt\gcl1\gnl
  \gvac1\gwmu3\gnl
  \gwmu3\gnl
  \gob3H\gend
  \in\B(H\ot H,H)
\end{equation*}
to the two sides of \cref{equiv:normal} to get
\begin{equation*}
  \gbeg5{12}
  \got3H\gvac1\got1H\gnl
  \gwcm3\gvac1\gcl2\gnl
  \gcl2\gwcmh325\gnl
  \gvac1\gcmu\gbr\gnl
  \gmp+\gcl3\gbr\gcl5\gnl
  \gbmp f\gvac1\gcl1\gmp+\gnl
  \gcl4\gvac1\gmu\gnl
  \gvac1\gwmuh314\gnl
  \gvac2\gbmp f\gnl
  \gvac2\gwmu3\gnl
  \gwmu4\gnl
  \gob4H\gend
  =
  \gbeg5{11}
  \got4H\got1H\gnl
  \gwcmh426\gcl2\gnl
  \gcmu\gcmu\gnl
  \gmp+\gcl3\gcl1\gbr\gnl
  \gcl2\gvac1\gbr\gcl3\gnl
  \gvac2\gcl1\gmp+\gnl
  \gbmp f\gbmp f\gbmp f\gbmp f\gnl
  \gmu\gcl1\gmu\gnl
  \gwmuh325\gcn2122\gnl
  \gvac1\gwmuh416\gnl
  \gvac1\gob4H\gend
  =
  \gbeg25
  \got1H\got1H\gnl
  \gbr\gnl
  \gbmp f\gbmp g\gnl
  \gmu\gnl
  \gob2H\gend
\end{equation*}
and
\begin{equation*}
  \gbeg5{12}
  \got3H\gvac1\got1H\gnl
  \gwcm3\gvac1\gcl2\gnl
  \gcl4\gwcmh325\gnl
  \gvac1\gcmu\gbr\gnl
  \gvac1\gcl4\gcl1\gbmp f\gcl6\gnl
  \gvac2\gbr\gnl
  \gmp+\gvac1\gcl1\gmp+\gnl
  \gbmp f\gvac1\gmu\gnl
  \gcl1\gwmuh314\gnl
  \gwmu3\gnl
  \gvac1\gwmu4\gnl
  \gvac1\gob4H\gend
  =
  \gbeg5{10}
  \got4H\got1H\gnl
  \gwcmh426\gcl2\gnl
  \gcmu\gcmu\gnl
  \gcl1\gcl3\gcl1\gbr\gnl
  \gmp+\gvac1\gbr\gcl2\gnl
  \gbmp f\gvac1\gbmp f\gmp+\gnl
  \gmu\gcl1\gmu\gnl
  \gwmuh325\gcn2122\gnl
  \gvac1\gwmuh416\gnl
  \gvac1\gob4H
  \gend
  =
  \gbeg25
  \got1H\got1H\gnl
  \gcl1\gcl1\gnl
  \gbmp g\gbmp f\gnl
  \gmu\gnl
  \gob2H\gend
\end{equation*}
which are the two sides of \cref{equiv:normal}.
\end{proof}

%
%
\section{Epic or monic endomorphisms}
\label{sec:endom-that-are}

We recall Radford's theorem on Hopf algebras with a projection
\cite{Rad:SHAP}, which was generalized to a categorical setting even
more general than the one in the present paper by Bespalov and Drabant
\cite{BesDra:HBMCMBC}:
\begin{Thm}
  Let $H$ be a Hopf algebra, and $\pi$ an idempotent Hopf algebra
  endomorphism of $H$. Then $H\cong \Img(\pi)\ot \Img(p)$, where
  $p=(\pi\circ S)*\id_H$ is an idempotent endomorphism of the object
  $H$ in $\B$ (but not necessarily a Hopf endomorphism). $B:=\Img(p)$
  is a subalgebra and a quotient coalgebra of $H$. The algebra
  structure of $\Img(\pi)\ot B$ is a semidirect product with respect to
  a certain action of $K=\Img(\pi)$ on $B$, and the coalgebra
  structure is the cosemidirect product with respect to a certain
  coaction.

  Moreover $\Img(p)\cong\lcofix H\pi\cong \lquot H\pi$.
\end{Thm}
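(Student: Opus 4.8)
The statement through its penultimate sentence is a recalled result, so the plan is to obtain it from the braided biproduct theorem of Bespalov and Drabant \cite{BesDra:HBMCMBC}, spelling out the translation into the present notation and carrying out in detail the final identification of $\Img(p)$ with the (co)invariant (co)subobjects, since that is the part actually used below. First I would recognize $K$: factor $\pi\colon H\to H$ through its image in the abelian category $\B$ as $\pi=\iota\rho$, with $\rho\colon H\twoheadrightarrow K$ epic and $\iota\colon K\hookrightarrow H$ monic. Since $\pi$ is a Hopf endomorphism with $\pi^2=\pi$, both $\rho$ and $\iota$ are Hopf algebra morphisms and $\rho\iota=\id_K$; hence $K=\Img(\pi)$ is at once a Hopf subalgebra and a quotient Hopf algebra of $H$, in particular a Hopf algebra in $\B$.

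Next I would analyze $p=(\pi\circ S)*\id_H$. Because $\pi$ is a bialgebra morphism between Hopf algebras, $\pi S=S\pi$ is the two-sided convolution inverse of $\pi$, so $\pi*(\pi S)=\eta\epsilon=(\pi S)*\pi$; combined with associativity of convolution, short diagram computations --- most of them not even involving the braiding --- give $\pi*p=\id_H$, $\pi\circ p=\eta_H\epsilon_H$ and $p\circ\iota=\iota\circ\eta_K\epsilon_K$, and then $p\circ p=p$ using that $\pi$ is an algebra morphism together with the braided forms of the antipode's anti-(co)multiplicativity. Since $p$ is idempotent, $\Img(p)$ is exactly the subobject on which $p$ restricts to the identity. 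Writing $p=\nabla\circ(S\ot H)\circ(\pi\ot H)\circ\Delta$, one checks at once that $(\pi\ot H)\Delta(h)=\eta\ot h$ forces $p(h)=h$, so the equalizer $\lcofix H\pi\hookrightarrow H$ factors through $\Img(p)$; conversely, applying $(\pi\ot H)\Delta$ to $p(h)$ and simplifying with $\pi\circ p=\eta_H\epsilon_H$ shows that every $p(h)$ is left $\pi$-coinvariant, so $\Img(p)\cong\lcofix H\pi$. Dually, the surjection $H\to\Img(p)$ induced by $p$ coequalizes $\nabla(\pi\ot H)$ and $\epsilon\ot H$ --- the identity $p(\pi(a)b)=\epsilon(a)p(b)$ is immediate from the convolution identities just listed --- and is universal with this property, so $\Img(p)\cong\lquot H\pi$. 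Since coinvariant subobjects are subalgebras and invariant quotients are quotient coalgebras, this also exhibits $B=\Img(p)$ as a subalgebra and a quotient coalgebra of $H$.

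The biproduct decomposition now follows: $\pi*p=\id_H$ says exactly that $H\xrightarrow{\Delta}H\ot H\xrightarrow{\rho\ot p}K\ot B$ is split monic with a retraction induced by $K\ot B\hookrightarrow H\ot H\xrightarrow{\nabla}H$, and a dual bookkeeping (using $p\circ p=p$, $p\circ\iota=\iota\circ\eta_K\epsilon_K$ and $\pi\circ p=\eta_H\epsilon_H$) shows the reverse composite is the identity, so this is an isomorphism of objects $H\cong\Img(\pi)\ot\Img(p)$. Transporting the algebra and coalgebra structures of $H$ along it and simplifying --- exactly as in Radford's original argument \cite{Rad:SHAP}, but keeping track of the braiding --- recovers the semidirect product for an adjoint-type action of $K$ on $B$ and the cosemidirect coproduct for the dual coaction, i.e.\ the Bespalov--Drabant biproduct, to which I would refer for the verification of these structure formulas.

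The main obstacle is not conceptual but a matter of bookkeeping: one must carry the braiding $\brd$ correctly through the string-diagram manipulations of the second step (using the braided rather than symmetric forms of the antipode identities and of $\Delta\circ\nabla$, and without assuming $K$ and $B$ unbraided --- the biproduct really lives in the Yetter--Drinfeld category over $K$), which is precisely what \cite{BesDra:HBMCMBC} carries out; this is why the cleanest route imports the biproduct structure from there and re-derives only the coinvariant identifications that the present paper needs.
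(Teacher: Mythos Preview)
Your proposal is correct and follows essentially the same route as the paper: defer the biproduct structure to Bespalov--Drabant and supply only the identification $\Img(p)\cong\lcofix H\pi$ (the paper leaves $\lquot H\pi$ implicit as dual). One small caution: the inclusion $\Img(p)\subset\lcofix H\pi$ does not follow from $\pi\circ p=\eta\epsilon$ alone, since $p$ is not a coalgebra map; you really must expand $(\pi\ot H)\Delta p$ via the bialgebra axiom and use $\pi^2=\pi$ together with the antipode identity, which is exactly the string-diagram computation the paper carries out --- so your ``simplifying'' step hides the one nontrivial calculation.
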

\begin{proof}
  Only the last statement is not in \cite{BesDra:HBMCMBC}, who avoid using
  coinvariant subobjects altogether to generalize \cite{Rad:SHAP} to
  categories that might not have equalizers. We check the first
  isomorphism: We find
  \begin{equation*}
    \gbeg37
    \got3H\gnl
    \gvac1\gcl1\gnl
    \gvac1\gbmp p\gnl
    \gwcm3\gnl
    \gbmp\pi\gvac1\gcl2\gnl
    \gcl1\gnl
    \gob1H\gvac1\gob1H\gend
    =
    \gbeg29
    \got2H\gnl
    \gcmu\gnl
    \gmp+\gcl2\gnl
    \gbmp\pi\gnl
    \gmu\gnl
    \gcmu\gnl
    \gbmp\pi\gcl2\gnl
    \gcl1\gnl
    \gob1H\gob1H\gend
    =
    \gbeg5{10}
    \got5H\gnl
    \gvac1\gwcm3\gnl
    \gvac1\gmp+\gvac1\gcl2\gnl
    \gvac1\gbmp\pi\gnl
    \gwcmh324\gvac{-1}\gwcmh324\gnl
    \gcn2122\gvac{-1}\gbbrh2124\gcn2122\gnl
    \gwmuh324\gvac{-1}\gwmuh324\gnl
    \gvac1\gbmp\pi\gvac1\gcl2\gnl
    \gvac1\gcl1\gnl
    \gvac1\gob1H\gvac1\gob1H
    \gend
    =
    \gbeg49
    \got4H\gnl
    \gwcmh426\gnl
    \gcmu\gcmu\gnl
    \gbr\gcl2\gcl4\gnl
    \gmp+\gmp+\gnl
    \gbmp\pi\gbr\gnl
    \gbmp\pi\gbmp\pi\gbmp\pi\gnl
    \gmu\gmu\gnl
    \gob2H\gob2H\gend
    =
    \gbeg4{10}
    \got3H\gnl
    \gwcm3\gnl
    \gcl2\gwcmh325\gnl
    \gvac1\gcmu\gcl5\gnl
    \gbr\gcl2\gnl
    \gmp+\gmp+\gnl
    \gcl1\gbr\gnl
    \gbmp\pi\gbmp\pi\gbmp\pi\gnl
    \gmu\gmu\gnl
    \gob2H\gob2H\gend
    =
    \gbeg36
    \gvac1\got2H\gnl
    \gvac1\gcmu\gnl
    \gvac1\gmp+\gcl2\gnl
    \gu1\gbmp\pi\gnl
    \gcl1\gmu\gnl
    \gob1H\gob2H\gend
  \end{equation*}
  and if some morphism $t\colon T\to H$ satisfies $(\pi\ot\id_H)\Delta
  t=\eta\ot t$, then
  \begin{equation*}
    pt
    =
    \gbeg38
    \got3T\gnl
    \gvac1\gcl1\gnl
    \gvac1\gbmp t\gnl
    \gwcm3\gnl
    \gbmp\pi\gvac1\gcl2\gnl
    \gmp+\gnl
    \gwmu3\gnl
    \gob3H\gend
    =
    \gbeg25
    \gvac1\got1T\gnl
    \gvac1\gcl1\gnl
    \gu1\gbmp t\gnl
    \gmu\gnl
    \gob2H\gend
    =t
  \end{equation*}
\end{proof}

\begin{Prop}\label{Radford}
  Let $H$ be a Hopf algebra, and $f$ a Hopf algebra endomorphism of
  $H$.
  
  Assume that $H$ satisfies both chain conditions.

  Then there is $n\in\N$ such that $H\cong \Img(f^n)\ot \lcofix
  H{f^n}$ is a Radford biproduct.
\end{Prop}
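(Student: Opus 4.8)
The plan is to use the two chain conditions to extract, for $n$ large, an idempotent Hopf algebra endomorphism $\pi$ of $H$ with the same image and the same left coinvariants as $f^n$, and then to apply the theorem of Radford (Bespalov--Drabant) recalled above to $\pi$.

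First I would stabilize the images and coimages of the powers of $f$. Since $\B$ is abelian, each $f^n$ factors as $H\twoheadrightarrow\Coim(f^n)\hookrightarrow H$ with $\Coim(f^n)\cong\Img(f^n)$, and this common object is a quotient Hopf algebra of $H$ (through the epimorphism) as well as a Hopf subalgebra of $H$ (through the monomorphism), because $f^n$ is a morphism of Hopf algebras. As $\Img(f^{n+1})=f(\Img f^n)\subseteq\Img f^n$, the $\Img(f^n)$ form a descending chain of Hopf subalgebras; dually each $\Coim(f^{n+1})$ is a quotient of $\Coim(f^n)$, so the $\Coim(f^n)$ form a descending chain of quotient Hopf algebras. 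By the two chain conditions both chains become constant from some $n$ on; in particular $\Img(f^n)=\Img(f^{2n})=:K$ and $\Ker(f^n)=\Ker(f^{2n})$.

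The key step, which I expect to be the main obstacle, is to deduce from this that $g:=f^n|_K\colon K\to K$ is an isomorphism — the Hopf-algebraic incarnation of Fitting's argument. It is epic since $g(K)=f^n(\Img f^n)=\Img(f^{2n})=K$; and its kernel $\Ker(f^n)\cap\Img(f^n)$ vanishes, because restricting $f^n$ to $\Ker(f^{2n})$ gives a short exact sequence $0\to\Ker(f^n)\to\Ker(f^{2n})\to\Ker(f^n)\cap\Img(f^n)\to 0$ whose first two terms are equal, forcing the third to vanish. Writing $f^n=\iota\bar f$ with $\iota\colon K\hookrightarrow H$ the inclusion and $\bar f\colon H\twoheadrightarrow K$ a Hopf algebra epimorphism, one has $g=\bar f\iota$, and I set $\pi:=\iota\circ g^{-1}\circ\bar f$. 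Then $\pi$ is a Hopf algebra endomorphism, it is idempotent since $\pi^2=\iota g^{-1}(\bar f\iota)g^{-1}\bar f=\iota g^{-1}\bar f=\pi$, and $\Img(\pi)=\iota(g^{-1}(\bar f(H)))=\iota(K)=K=\Img(f^n)$.

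Finally I would apply the Radford theorem above to $\pi$, obtaining a Radford biproduct decomposition $H\cong\Img(\pi)\ot\Img(p)=K\ot\lcofix{H}{\pi}$ with $p=(\pi S)*\id_H$, and then identify $\lcofix{H}{\pi}$ with $\lcofix{H}{f^n}$. For this, note that both $\pi=(\iota g^{-1})\circ\bar f$ and $f^n=\iota\circ\bar f$ have the form $m\circ\bar f$ with $m\colon K\to H$ a monomorphism satisfying $m\eta_K=\eta_H$; as tensor products in $\B$ are exact, $m\ot H$ is again monic, so for every $h\colon T\to H$ one gets $(\pi\ot H)\Delta h=\eta_H\ot h\iff(\bar f\ot H)\Delta h=\eta_K\ot h\iff(f^n\ot H)\Delta h=\eta_H\ot h$. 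Hence $\lcofix{H}{\pi}$ and $\lcofix{H}{f^n}$ coincide as subobjects of $H$, and $H\cong\Img(f^n)\ot\lcofix{H}{f^n}$ is the asserted Radford biproduct; this $n$ works. Once the bookkeeping of the second paragraph (recognizing the $\Img(f^n)$ as a descending chain of Hopf subalgebras and the $\Coim(f^n)$ as a descending chain of quotient Hopf algebras) and the invertibility of $g$ are in place, the remainder is formal.
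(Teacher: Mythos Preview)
Your proof is correct and follows essentially the same route as the paper: stabilize images and kernels via the two chain conditions, use the resulting invertibility of $f^n$ on its image to build an idempotent Hopf endomorphism $\pi=\iota g^{-1}\bar f$, then invoke the Radford--Bespalov--Drabant theorem and identify $\lcofix{H}{\pi}$ with $\lcofix{H}{f^n}$. Your write-up is in fact more explicit than the paper's in two places (the short exact sequence argument for $\Ker g=0$, and the equalizer comparison $\lcofix{H}{\pi}=\lcofix{H}{f^n}$ via the common epi $\bar f$), but there is no genuine difference in strategy.
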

\begin{proof}
  Consider the epi-mono factorization $f=(H\xrightarrow eB\xrightarrow
  mH)$, where we identify $B=\Img(f)=\Coim(f)$. Then the endomorphism
  $t=em$ of $B$ satisfies $mt=fm$ and $te=ef$. The chain conditions on
  $H$ imply that the ascending chain of the kernels of $f^n$ and the
  descending chain of the images, hence the ordered chain of quotient
  objects formed by the cokernels of $f^n$ stablilize. Then, replacing
  $f$ by a suitable power $f^n$, we can assume that $t$ is an
  isomorphism. Then $\pi=mt\inv e$ is an idempotent endomorphism of
  $H$, since $\pi^2=mt\inv emt\inv e=mt\inv tt\inv e=mt\inv e=\pi$.

  Thus $H\cong \Img(\pi)\ot \lcofix H\pi$ is a Radford
  biproduct. Moreover, $\Img(\pi)=\Img(f)$, and $\lcofix
  H\pi=\lcofix Hf$.
\end{proof}
\begin{Prop}
  Let $H$ be a Hopf algebra in $\B$ that satisfies both chain conditions, and
  $f$ a Hopf algebra endomorphism of $H$.
  \begin{enumerate}
  \item If the left or right $f$-coinvariants of $H$ are trivial, then
    $f$ is a monomorphism in $\B$.
  \item If the left or right $f$-invariant quotient of $H$ is trivial,
    then $f$ is an epimorphism in $\B$.
  \end{enumerate}
\end{Prop}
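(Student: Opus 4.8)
Part (ii) is dual to part (i): pass to the opposite category, which is again braided abelian with exact tensor products and inherits the chain conditions, and note that under this duality invariant quotients become coinvariant subobjects and epimorphisms become monomorphisms; the left and right versions of each part are exchanged by replacing $\tau$ with $\tau\inv$. So it suffices to plan the proof that \emph{if $\lcofix Hf=\neut$ then $f$ is a monomorphism in $\B$}. (The converse is easy and uses no chain conditions: on $H=\neut\oplus H^+$ the equalizer map defining $\lcofix Hf$ restricts to a monomorphism on $H^+$ once $f$ is, because composing it with $H\ot\epsilon$ recovers $f|_{H^+}$; so it is the direction stated that needs the machinery.)

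The idea is to reduce the statement to an assertion about powers of $f$ via \cref{Radford}. Choose $n$ as in \cref{Radford} (so the relevant chains have stabilized, in particular $\Img f^n=\Img f^{n+1}$); set $K=\Img f^n$ and $B=\lcofix H{f^n}$, so that $H\cong K\ot B$ through the multiplication $K\ot B\to H$. From the proof of \cref{Radford} one reads off that $f^n(K)=K$ with $f^n|_K\colon K\to K$ an isomorphism, and that $f^n|_B=\eta\epsilon$ (there $f^n\pi=f^n$ and $\pi|_B=\eta\epsilon$). Since $f$, hence $f^n$, is an algebra morphism and the biproduct isomorphism is the multiplication, $f^n$ corresponds under $H\cong K\ot B$ to $f^n|_K\ot\eta_B\epsilon_B$; invertibility of $f^n|_K$ then gives $\Ker f^n\cong K\ot B^+$ with $B^+=\Ker\epsilon_B$, and since $K\ot(-)$ is exact and faithful (it retracts onto $\Id$ via $\eta_K,\epsilon_K$) we conclude that $f^n$ is a monomorphism iff $B=\neut$. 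As $f$ is a monomorphism iff $f^n$ is (trivially one way; the other because $\Ker f\subseteq\Ker f^n$), it suffices to prove the implication $\lcofix Hf=\neut\Rightarrow\lcofix H{f^n}=\neut$.

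For that I would run a dévissage along the ascending chain $L_k:=\lcofix H{f^k}$. Applying $f\ot H$ to the defining equation of $L_k$ shows $L_k\subseteq L_{k+1}$, and a short graphical computation (using that $f$ is a bialgebra morphism) shows that $f$ restricts to morphisms $L_{k+1}\to L_k$; thus $\neut=L_0=L_1\subseteq L_2\subseteq\dots\subseteq L_n$, the equality $L_1=\neut$ being the hypothesis. One then proves $L_k=\neut$ by induction on $k$: given $L_{k-1}=\neut$, the restriction $f\colon L_k\to L_{k-1}=\neut$ shows that $f$ carries $L_k$ into the unit object, i.e.\ $f|_{L_k}=\eta\epsilon$.

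The remaining step — upgrading ``$f$ annihilates $L_k$'' together with ``$L_k=\lcofix H{f^k}$'' to ``$L_k=\neut$'' — is the part I expect to be the real work. For group algebras this is immediate, because there $\lcofix{\kk G}{\phi^k}=\kk[\Ker\phi^k]$ is a Hopf \emph{sub}algebra, so the equalizer cutting out $L_k$ lives inside $L_k\ot L_k$ and ``$\phi$ kills $L_k$'' forces $L_k\subseteq\Ker\phi\subseteq L_1$ outright. In the braided Hopf setting $\lcofix H{f^k}$ is only a subalgebra, not a subcoalgebra — its intrinsic coproduct is the twisted one of the biproduct, not the restriction of $\Delta_H$ — so this reasoning does not transfer, and one must argue with the equalizer $L_k=\operatorname{Eq}\bigl((f^k\ot H)\Delta,\ \eta\ot H\bigr)$ directly. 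The route I would take: exploit the decomposition $f=f|_K\ot f|_B$ (valid as a morphism of objects, since $f$ is multiplicative and $f|_K$ invertible) to express $\lcofix Hf$ in terms of the $f|_B$-coinvariants of the braided Hopf algebra $B$ taken with its biproduct coproduct, and then feed in a faithful-(co)flatness statement for $H$ as a relative Hopf module over $B$ — equivalently over $L_k$ — to conclude that triviality of these coinvariants forces $B$ (resp.\ $L_k$) to be trivial. Getting that relative-Hopf-module input to work in an arbitrary braided abelian category with the chain conditions, rather than over a field where it is classical, is the delicate point; the rest is bookkeeping around \cref{Radford}.
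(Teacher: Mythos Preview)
Your overall architecture matches the paper's: reduce via \cref{Radford} to showing that $\lcofix Hf=\neut$ forces $\lcofix H{f^n}=\neut$, and approach that by a d\'evissage on the chain $L_k:=\lcofix H{f^k}$. Your Step~1 is fine (the paper argues slightly differently---if $B=\neut$ but $f^n$ is not monic then $H\cong\Img f^n$ is isomorphic to a proper quotient of itself, contradicting the chain conditions---but your kernel computation works too).

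The genuine gap is exactly the step you flag: passing from ``$f|_{L_k}=\eta\epsilon$'' to ``$L_k=\neut$''. You propose faithful-flatness machinery, but this is unnecessary and would indeed be delicate in the stated generality. The paper's trick is elementary. Instead of collapsing to $fj$, which discards the coalgebra information you need (as you correctly diagnose), look at $(f\ot H)\Delta j\colon L_m\to H\ot H$. A one-line coassociativity calculation,
\[
  \bigl((f^{m-1}\ot H)\Delta\ot H\bigr)(f\ot H)\Delta j
  =(f^m\ot (f\ot H)\Delta)\Delta j
  =\eta\ot (f\ot H)\Delta j,
\]
shows that $(f\ot H)\Delta j$ equalizes the two maps whose equalizer is $L_{m-1}\ot H$ (an equalizer by exactness of $-\ot H$), hence factors through it. If $L_{m-1}=\neut$ this forces $(f\ot H)\Delta j=\eta\ot j$ (apply $\epsilon\ot H$ to identify the second component), i.e.\ $L_m\subseteq L_1=\neut$. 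No relative-Hopf-module input is needed. Your own observation that $f$ restricts to $L_{k+1}\to L_k$ is precisely this same computation followed by $H\ot\epsilon$; the whole point is not to apply $H\ot\epsilon$ too early.
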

\begin{proof}
  We prove the first part. By \cref{Radford}, $H\cong \Img(f^n)\ot
  \lcofix H{f^n}$ is a Radford biproduct for some $n$. If $\lcofix H{f^n}$ were trivial without $f^n$ being monic, it
  would follow that $H$ is isomorphic to a proper quotient of itself,
  contradicting the chain conditions. Now assume for some $m>1$ that
  $\lcofix H{f^m}$ is nontrivial. Let $j\colon\lcofix H{f^m}\to H$ be
  the inclusion. By exactness of tensor products in $\B$, we have an
  equalizer
  \begin{equation*}
    \xymatrix{0\ar[r]&\lcofix Hf\ot H\ar[r]&H\ar@<0.5ex>[rr]^-{(f\ot
        H)\Delta\ot H}\ar@<-0.5ex>[rr]_-{\eta\ot H\ot H}&&H\ot H\ot H}
  \end{equation*}
  and by the calculation
  \begin{multline*}
    ((f^{m-1}\ot H)\Delta\ot H)(f\ot H)\Delta j
    =(f^m\ot f\ot H)(\Delta\ot H)\Delta j
    \\=(f^m\ot (f\ot H)\Delta)\Delta j
    =\eta\ot (f\ot H)\Delta j
  \end{multline*}
  we see that $(f\ot H)\Delta j$ factors through this equalizer. We
  conclude that if $(f\ot H)\Delta j$ were not trivial, then it would
  follow that $\lcofix H{f^{m-1}}\ot H$ is not $\nob\ot H$, which
  implies $\lcofix H{f^{m-1}}$ is nontrivial. We can conclude by
  induction that $\lcofix Hf$ is nontrivial after all.
\end{proof}
\begin{Rems}
  Let $f\colon H\to G$ be a Hopf algebra homomorphism in
  $\B$.
  \begin{enumerate}
  \item Clearly, if $f$ is a monomorphism in $\B$, then it is a
  monomorphism in $\HopfAlgB$.
  \item If $f$ has trivial left or right coinvariants, then $f$ is a
    monomorphism in $\CoalgB$.
  \item If $f$ is normal, and a monomorphism in $\HopfAlgB$, then $f$
    has trivial left and right coinvariants.
  \end{enumerate}
  Thus the preceding result shows that normal endomorphisms of a Hopf
  algebra in $\B$ are monic (epic) if and only if they are so
  considered as morphisms in $\B$. 
\end{Rems}

\begin{proof}
  If $C$ is a coalgebra and $g,h\colon C\to H$ are coalgebra morphisms
  with $fg=fh$, then 
  \begin{multline*}
    \gbeg29
    \got2C\gnl
    \gcmu\gnl
    \gbmp g\gbmp h\gnl
    \gcl1\gmp+\gnl
    \gmu\gnl
    \gcmu\gnl
    \gcl2\gbmp f\gnl
    \gvac1\gcl1\gnl
    \gob1H\gob1G\gend
    =
    \gbeg5{10}
    \got5C\gnl
    \gvac1\gwcm3\gnl
    \gvac1\gbmp g\gvac1\gbmp h\gnl
    \gvac1\gcl1\gvac1\gmp+\gnl
    \gwcmh324\gvac{-1}\gwcmh324\gnl
    \gcn1122\gbbrh2124\gcn1122\gnl
    \gwmuh324\gvac{-1}\gwmuh324\gnl
    \gvac1\gcl2\gvac1\gbmp f\gnl
    \gvac3\gcl1\gnl
    \gvac1\gob1H\gvac1\gob1G\gend
    =
    \gbeg49
    \got4C\gnl
    \gwcmh426\gnl
    \gcmu\gcmu\gnl
    \gcl3\gcl1\gbr\gnl
    \gvac1\gbr\gbmp h\gnl
    \gvac1\gmp+\gbmp g\gmp +\gnl
    \gbmp g\gbmp h\gbmp f\gbmp f\gnl
    \gmu\gmu\gnl
    \gob2H\gob2G\gend
    =
    \gbeg49
    \got4C\gnl
    \gwcmh426\gnl
    \gcmu\gcmu\gnl
    \gcl3\gcl1\gbr\gnl
    \gvac1\gbr\gbmp g\gnl
    \gvac1\gmp+\gbmp g\gmp +\gnl
    \gbmp g\gbmp h\gbmp f\gbmp f\gnl
    \gmu\gmu\gnl
    \gob2H\gob2G\gend
    =\\\\=
    \gbeg4{11}
    \got4C\gnl
    \gwcmh416\gnl
    \gcl6\gwcmh425\gnl
    \gvac1\gcmu\gcl2\gnl
    \gvac1\gbmp g\gbmp g\gnl
    \gvac1\gcl1\gbr\gnl
    \gvac1\gbr\gmp+\gnl
    \gvac1\gbmp h\gbmp f\gbmp f\gnl
    \gbmp g\gmp+\gmu\gnl
    \gmu\gcn2122\gnl
    \gob2H\gob2G
    \gend
    =
    \gbeg36
    \got2C\gnl
    \gcmu\gnl
    \gbmp g\gbmp h\gnl
    \gcl1\gmp+\gu1\gnl
    \gmu\gcl1\gnl
    \gob2H\gob1G\gend
  \end{multline*}
  and thus, if $\rcofix Hf$ is trivial, $g*Sh=\eta\epsilon$, whence
  $g=h$. If $f$ is normal, then the coinvariants are a Hopf subalgebra.
\end{proof}

\begin{Rem}
	In general it is false that monic is equivalent to trivial coinvariants, or that epic is equivalent to trivial invariants.  In finite dimensions these concepts agree by the Nichols-Zoeller theorem \cite{NicZoe:HAFT,Scha:NZTHACYDM}.  In infinite dimensions, however, counterexamples are known \cite{Chi:EMHA}.
\end{Rem}

\begin{Lem}
  Let $H$ be a Hopf algebra in $\B$ that satisfies both chain conditions. Assume further that the braiding $\brd_{HH}$ has finite order. Then the antipode of $H$ is an automorphism in $\B$.
\end{Lem}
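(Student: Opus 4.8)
The plan is to deduce the statement from Proposition~\ref{Radford} after replacing $S$ by a genuine Hopf algebra endomorphism. First recall the braided antipode identities $S\nabla=\nabla(S\ot S)\brd_{HH}$, $\Delta S=\brd_{HH}(S\ot S)\Delta$, $S\eta=\eta$, $\epsilon S=\epsilon$, which hold in any braided tensor category (depending on conventions $\brd$ may have to be replaced by $\brd\inv$ here, which is immaterial for what follows; alternatively one checks these from the convolution characterisation of $S$ in the graphical calculus fixed above). Since the braiding is natural we have $\brd_{HH}(S\ot S)=(S\ot S)\brd_{HH}$, so an easy induction gives $S^{j}\nabla=\nabla(S^{j}\ot S^{j})\brd_{HH}^{\,j}$ and $\Delta S^{j}=\brd_{HH}^{\,j}(S^{j}\ot S^{j})\Delta$ for every $j\ge1$. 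Writing $d$ for the order of $\brd_{HH}$, it follows that $g:=S^{2d}$ satisfies $g\nabla=\nabla(g\ot g)$, $\Delta g=(g\ot g)\Delta$, $g\eta=\eta$ and $\epsilon g=\epsilon$, i.e.\ $g$ is a Hopf algebra endomorphism of $H$ (as is each $S^{2dk}$).

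Now apply Proposition~\ref{Radford} to $g$: there is $n\in\N$ with $H\cong\Img(g^{n})\ot B$ a Radford biproduct, where $B:=\lcofix H{g^{n}}$, and (as in the proof of that proposition) $g^{n}$ restricts to an automorphism of the Hopf subalgebra $A:=\Img(g^{n})$. I claim it suffices to show that $B$ is trivial. Indeed, if $B\cong\nob$ then the biproduct identifies $H$ with $A$ compatibly with the inclusion $A\hookrightarrow H$, so $g^{n}$ is an epimorphism in $\B$; its left coinvariants then being trivial, $g^{n}$ is also a monomorphism by the Proposition on $f$-coinvariants above, hence an isomorphism in $\B$. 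But then $S$ is an automorphism of $H$, with inverse $S^{2dn-1}\circ(g^{n})\inv=(g^{n})\inv\circ S^{2dn-1}$.

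It remains to prove $B\cong\nob$, and this is the step I expect to be the main obstacle. Since $B$ is the $g^{n}$-coinvariant tensorand, $g^{n}=S^{2dn}$ acts on $B$ as $\eta\epsilon$, while it is invertible on $A$; morally $S$ is ``block diagonal'' for $H\cong A\ot B$, invertible on $A$ and nilpotent on the augmentation part of $B$. To turn this into a proof one works with $B$ as a braided Hopf algebra in the Yetter--Drinfeld category over $A$ provided by the generalised Radford biproduct of \cite{BesDra:HBMCMBC}: one has to check that $B$ again satisfies both chain conditions (via the order-reversing correspondence between sub- and quotient Hopf algebras of $B$ and intermediate Hopf algebras $A\subseteq C\subseteq H$) and that its braiding still has finite order, and that the antipode of $B$ — which is the one induced by $S$ — is nilpotent on $B^{+}:=\ker(\epsilon_{B})$. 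Granting this, a suitable induction (for instance on the length of $H$, which is finite under the chain conditions in the cases of interest, or on the number of tensor-prime factors) lets one apply the present lemma to $B$ and conclude that its antipode is bijective; a bijective nilpotent endomorphism of $B^{+}$ forces $B^{+}=0$, hence $B\cong\nob$. The delicate part is therefore the precise transfer of the nilpotency of $S$ off $A$ to the antipode of $B$, together with the set-up of the induction; the rest is formal.
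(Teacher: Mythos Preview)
Your reduction to showing $B:=\lcofix H{g^n}$ is trivial is fine, and you correctly note that it would suffice, but the proposed method for killing $B$ does not work as stated. The Yetter--Drinfeld braiding on $B$ over $A=\Img(g^n)$ is built from the $A$-action and $A$-coaction on $B$ coming from the biproduct, not from $\brd_{HH}$; there is no reason why finite order of $\brd_{HH}$ should force finite order of the YD braiding $c_{B,B}(b\ot b')=(b_{(-1)}\rhd b')\ot b_{(0)}$. So the inductive hypothesis you would need for $B$ is simply unavailable. The further issues you flag --- transferring nilpotency of $S^{2dn}$ on $B\subset H$ to the antipode of $B$ in the YD category (which is not the restriction of $S$), and setting up an induction on a quantity that actually decreases when passing from $H$ to a Hopf algebra in a \emph{different} ambient category --- are also real, but the braiding problem alone already blocks the argument.

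The paper avoids all of this by showing directly that $\lcofix H{S^{2n}}$ is trivial for any $n$, without ever forming the biproduct. The key is a downward induction on the exponent: if $t\colon T\to H$ satisfies $(S^{m+1}\ot H)\Delta t=\eta\ot t$, then a short manipulation using only coassociativity, the counit axiom, and the identity $S^m\nabla=\nabla(S^m\ot S^m)\brd_{HH}^{\,m}$ (applied to $\nabla(S\ot\id)\Delta=\eta\epsilon$) yields $(S^m\ot H)\Delta t=\eta\ot t$. Starting from $m=2n$ and iterating down to $m=0$ gives $\Delta t=\eta\ot t$, hence $t=\eta\epsilon t$. Thus the coinvariants of any even power of $S$ are trivial; since $g=S^{2d}$ is a Hopf endomorphism, the Proposition on trivial coinvariants makes $g$ monic in $\B$, and the dual argument makes it epic. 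The trick, then, is not structural (biproducts, YD categories) but a one-line inductive identity relating $S^{m+1}$-coinvariants to $S^m$-coinvariants.
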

\begin{proof}
  Depict the iterates of the antipode by
  \begin{equation*}
    S^m=\gbeg13\gcl1\gnl\gmp m\gnl\gcl1\gend
  \end{equation*}
  One has
  \begin{equation*}
    \gbeg24
    \gcl1\gcl1\gnl
    \gmp m\gmp m\gnl
    \gdnot{\tau^m}\glmptb\grmptb\gnl
    \gmu\gend
    =
    \gbeg33
    \gwmuh324\gnl
    \gvac1\gmp m\gnl
    \gvac1\gcl1\gend
  \end{equation*}
  Using this, we can show inductively that the coinvariants of $H$ under an iterate of the antipode are trivial as follows: Let $t\colon T\to H$ be a morphism factoring through $\lcofix H{S^{2n}}$, i.~e.\ $(S^{2n}\ot H)\Delta t=\eta\ot t$. We will show that $(S^m\ot H)\Delta t=\eta\ot t$ for any $m$, whence (taking $m=0$) $t=\eta\epsilon t$.

Assume $(S^{m+1}\ot H)\Delta t=\eta\ot t$, or pictorially
\begin{equation*}
  \gbeg38
  \got3T\gnl
  \gvac1\gcl1\gnl
  \gvac 1\gbmp t\gnl
  \gwcm 3\gnl
  \gmp +\gvac1\gcl3\gnl
  \gmp m\gnl
  \gcl1\gnl
  \gob1H\gvac1\gob1H\gend
  =
  \gbeg25
  \gvac1\got1T\gnl
  \gvac1\gcl1\gnl
  \gu1\gbmp t\gnl
  \gcl1\gcl1\gnl
  \gob1H\gob1H\gend
\end{equation*}
Then
\begin{equation*}
  \gbeg25
  \gvac1\got1T\gnl
  \gvac1\gcl1\gnl
  \gu1\gbmp t\gnl
  \gcl1\gcl1\gnl
  \gob1H\gob1H\gend
  =
  \gbeg3{11}
  \got3T\gnl
  \gvac1\gcl1\gnl
  \gvac1\gbmp t\gnl
  \gwcmh325\gnl
  \gcmu\gcl6\gnl
  \gmp+\gcl1\gnl
  \gmu\gnl
  \gcn2123\gnl
  \gvac1\gmp m\gnl
  \gvac1\gcl1\gnl
  \gvac1\gob1H\gob1H\gend
  =
  \gbeg39
  \got3T\gnl
  \gvac1\gcl1\gnl
  \gvac1\gbmp t\gnl
  \gwcmh314\gnl
  \gmp+\gcmu\gnl
  \gmp m\gmp m\gcl3\gnl
  \gdnot{\brd^m}\glmptb\grmptb\gnl
  \gmu\gnl
  \gob2H\gob1H\gend
  =
  \gbeg39
  \got3T\gnl
  \gvac1\gcl1\gnl
  \gvac1\gbmp t\gnl
  \gcn3134\gnl
  \gvac1\gcmu\gnl
  \gu1\gmp m\gcl3\gnl
  \gdnot{\brd^m}\glmptb\grmptb\gnl
  \gmu\gnl
  \gob2H\gob1H\gend
  =
  \gbeg38
  \got3T\gnl
  \gvac1\gcl1\gnl
  \gvac 1\gbmp t\gnl
  \gwcm 3\gnl
  \gmp m\gvac1\gcl2\gnl
  \gcl1\gnl
  \gob1H\gvac1\gob1H\gend
\end{equation*}

Since the braiding on $H$ has finite order by assumption, some even power of the antipode is a Hopf algebra endomorphism of $H$. Therefore that even power of the antipode is a monomorphism in $\B$. By the dual reasoning it is also an epimorphism, and therefore $S$ itself is an automorphism in $\B$.
\end{proof}

%
%
\section{Fitting's lemma}
\label{sec-fitting}
\begin{Prop}[Fitting's lemma]
  Let $H$ be a Hopf algebra, and $f$ a Hopf algebra endomorphism of $H$.
  
  Assume that $H$ satisfies both chain conditions, so that there is an $n\in\N$ such that $H\cong \Img(f^n)\ot \lcofix
    H{f^n} $ is a Radford biproduct.  
		
	If $f$ is normal, the action of $\Img(f^n)$ on $\lcofix H{f^n}$ is trivial, so that, as an algebra, $H$ is the tensor product of $\Img(f^n)$ and $\lcofix H{f^n}$ in $\B$. Similarly if $f$ is conormal, then the coalgebra $H$ is a tensor product of coalgebras in $\B$. In particular, if $f$ is binormal then $\Img(f^n)$ and $\lcofix H{f^n}$ are unbraided Hopf algebras in $\B$, and $H$ is isomorphic to their tensor product.
\end{Prop}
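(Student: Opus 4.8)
The plan is to invoke \cref{Radford} to reduce to a single endomorphism that already carries a Radford biproduct, and then to extract the triviality of the action (resp.\ coaction) by restricting the commutation (resp.\ cocommutation) characterization of normality to the two tensor factors sitting inside $H$.

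First I would replace $f$ by a power $f^n$ as in \cref{Radford}, noting that normality, conormality and binormality all pass to powers since $f^n$ is again a morphism of $H$-modules, resp.\ $H$-comodules, for the adjoint action, resp.\ coadjoint coaction. Write $K=\Img(f^n)$, $B=\lcofix H{f^n}$, with inclusions $\iota_K,\iota_B$; by \cref{Radford} the multiplication $\mu:=\nabla_H\circ(\iota_K\ot\iota_B)\colon K\ot B\to H$ is an isomorphism in $\B$, and $\iota_K,\iota_B$ are algebra morphisms ($K$ being a Hopf subalgebra and $B$ a subalgebra of $H$). From the construction in \cref{Radford}, $f^n$ restricts to an \emph{automorphism} $j$ of $K$, so $f^n\iota_K=\iota_K j$; and applying $\id_H\ot\epsilon_H$ to the equalizer identity defining $B$ gives $f^n\iota_B=\eta_H\epsilon_B$. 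If $f$ is normal, then the $f^n$-coinvariants form a Hopf subalgebra (as observed in \cref{sec:endom-that-are}), so $\iota_B$ is moreover a coalgebra morphism and intertwines the antipodes.

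Assume now $f$ (hence $f^n$) normal. By the characterization of normal endomorphisms above, $f^n\comm g$ with $g:=(f^nS)*\id_H$. Using the previous paragraph one checks $g\iota_B=\iota_B$ (indeed $(f^nS)\iota_B=f^n\iota_B S_B=\eta_H\epsilon_B$, so $g\iota_B=(\epsilon_B\ot\iota_B)\Delta_B=\iota_B$) and $f^n\iota_K=\iota_K j$. Precomposing the identity $\nabla_H(g\ot f^n)=\nabla_H(f^n\ot g)\brd_{H,H}$ with $\iota_B\ot\iota_K$ and using naturality of the braiding, the two sides become $\nabla_H(\iota_B\ot\iota_K)\circ(\id_B\ot j)$ and $\nabla_H(\iota_K\ot\iota_B)\circ\brd_{B,K}\circ(\id_B\ot j)$ respectively; cancelling the isomorphism $\id_B\ot j$ gives
\[
\nabla_H\circ(\iota_B\ot\iota_K)=\nabla_H\circ(\iota_K\ot\iota_B)\circ\brd_{B,K},
\]
that is, $K$ and $B$ commute inside $H$. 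Since $\mu$ is an isomorphism of objects and $\iota_K,\iota_B$ are algebra maps, unwinding this identity shows that $\mu$ is an isomorphism of algebras from the tensor product algebra $K\ot B$ onto $H$ — equivalently, the action of $K$ on $B$ in the Radford biproduct is trivial.

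Running the same argument with the dual characterization (now $g$ is a coalgebra morphism, $f^n\cocomm g$, and $B\cong\lquot H{f^n}$ is a quotient Hopf algebra) shows dually that, when $f$ is conormal, $\mu$ is an isomorphism of coalgebras onto $H$, so the coaction in the biproduct is trivial. If $f$ is binormal, $H$ is then simultaneously the tensor product of $K$ and $B$ as algebras and as coalgebras, hence as bialgebras; but the tensor product of two Hopf algebras in $\B$ can be a bialgebra only if the two factors are unbraided (the fact recalled at the end of \cref{sec:comm-cocomm-morph}), so $K$ and $B$ are unbraided and $H\cong K\ot B$ as Hopf algebras. I expect the main obstacle to be the bookkeeping in the third paragraph: correctly tracking the automorphism $j=f^n|_K$ (which is generally not the identity, so one cannot simply assume $f^n$ idempotent), verifying $g|_B=\id_B$ with the right braiding conventions, and confirming that the commutation relation obtained is exactly what collapses the smash-product structure of the biproduct to an honest tensor product.
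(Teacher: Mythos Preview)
Your proof is correct and follows essentially the same route as the paper: both use normality to obtain $f^n\comm g$ with $g=(f^nS)*\id_H$, observe that $g$ restricts to the identity on the coinvariant factor, and deduce that the two factors commute inside $H$, collapsing the smash product to a tensor product. The paper packages this via the idempotent $p$ (showing $p=g\circ p$ directly from $(f\ot H)\Delta p=\eta\ot p$, whence $f\comm p$) rather than via the inclusions $\iota_K,\iota_B$; this sidesteps both the bookkeeping with $j=f^n|_K$ that you flag at the end and your detour through ``$B$ is a Hopf subalgebra'' --- note that $g\iota_B=\iota_B$ already follows from the coinvariant equalizer $(f^n\ot H)\Delta\iota_B=\eta\ot\iota_B$ by applying $S\ot H$ and then $\nabla_H$, without needing $\iota_B$ to be a coalgebra morphism.
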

\begin{proof}
  We continue the proof of \cref{Radford}, assuming that $\Ker(f^2)=\Ker(f)$ and
  $\Coker(f^2)=\Coker(f)$ after replacing $f$ by a power of $f$. We
  now add the observation that
  normality of $f$ implies that $p=(f*(Sf*\id_H))p=(Sf*\id_H)p$. Therefore
  $f\comm p$, and dually $f\cocomm p$ if $f$ is conormal. This in turn implies that the
  Radford biproduct is just an ordinary tensor product algebra or
  tensor product coalgebra, as appropriate.
\end{proof}

\begin{Def}
  Let $H$ be a Hopf algebra. If $H\cong A\ot B$ for two Hopf algebras
  $A$ and $B$, then we say that $A$ is a tensor factor of $H$. We note
  that this implies that $A$ and $B$ are unbraided.

  We say that $H$ is tensor indecomposable if it does not have a
  nontrivial tensor factor.  An endomorphism $f$ of $H$ is nilpotent
  if there is $n\in\N$ such that $f^n=\eta\epsilon$.
\end{Def}

\begin{Cor}
  If $H$ is a tensor indecomposable Hopf algebra satisfying both chain conditions, then every binormal endomorphism of $H$ is nilpotent or an automorphism.
\end{Cor}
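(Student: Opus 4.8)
The plan is to feed $f$ into Fitting's lemma and then use tensor indecomposability to collapse the decomposition it produces. Since $f$ is binormal and $H$ satisfies both chain conditions, Fitting's lemma furnishes an $n\in\N$ together with an isomorphism $H\cong\Img(f^n)\ot\lcofix H{f^n}$ of Hopf algebras in $\B$, exhibiting $\Img(f^n)$ as a tensor factor of $H$ with complement $\lcofix H{f^n}$. Because $H$ is tensor indecomposable, one of the two factors is trivial, and the argument splits into the corresponding two cases.

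If $\Img(f^n)\cong\nob$, then $f^n$ factors through the trivial Hopf algebra $\nob$; as $\epsilon$ is the only Hopf algebra morphism $H\to\nob$ and $\eta$ the only one $\nob\to H$, this forces $f^n=\eta\epsilon$, i.e.\ $f$ is nilpotent. If instead $\lcofix H{f^n}\cong\nob$, I would show that $f^n$ is an isomorphism in $\B$. Applying the Proposition on monic and epic Hopf endomorphisms to $f^n$ (which inherits both chain conditions from $H$), triviality of the left $f^n$-coinvariants gives that $f^n$ is a monomorphism in $\B$; and since by the last clause of Radford's theorem the complementary factor is isomorphic not only to $\lcofix H{f^n}$ but also to the invariant quotient $\lquot H{f^n}$, that quotient is trivial as well, so the same Proposition gives that $f^n$ is an epimorphism in $\B$. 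Being monic and epic in the abelian category $\B$, $f^n$ is an isomorphism in $\B$; its inverse is automatically a Hopf algebra morphism, so $f^n\in\Hopfaut(H)$, and then $f^{n-1}\circ(f^n)\inv$ is a two-sided inverse of $f$ (using that $f$ commutes with $(f^n)\inv$), whence $f$ is an automorphism.

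The routine ingredients are the trivial-image case and the deduction of $f\in\Hopfaut(H)$ from $f^n\in\Hopfaut(H)$. The one step that deserves care is the epimorphism claim in the second case: it rests on the fact that the ``small'' tensor factor produced by Fitting's lemma is simultaneously the left coinvariant subalgebra and the left invariant quotient of $f^n$, which is exactly the extra content of the last sentence of Radford's theorem. Alternatively — and perhaps more transparently — one can observe directly that under the Radford decomposition the canonical inclusion $\Img(f^n)\hookrightarrow H$ becomes the isomorphism $\Img(f^n)\cong\Img(f^n)\ot\nob\cong H$, so that $f^n$ is a composite of an epimorphism with an isomorphism and hence epic; this also matches the group-theoretic proof, where $\operatorname{Ker}(f^n)=1$ immediately yields $\Img(f^n)=G$ from $G=\Img(f^n)\times\operatorname{Ker}(f^n)$.
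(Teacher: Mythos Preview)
Your proof is correct and follows exactly the route the paper intends: the Corollary is stated without proof as an immediate consequence of Fitting's Lemma, and you have carefully spelled out that deduction. Your caution about the epimorphism step is well placed---the identification $\lquot H{f^n}\cong\lquot H\pi$ is not literally stated in the paper (only $\Img(\pi)=\Img(f^n)$ is, from which it follows since invariant quotients depend only on the image)---but your alternative argument via the inclusion $\Img(f^n)\hookrightarrow H$ becoming an isomorphism is both cleaner and entirely self-contained.
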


%
%
\section{Krull-Remak-Schmidt}
\label{sec:krull-remak-schmidt}

Of course, a Hopf algebra satisfying both chain conditions can be (inductively) decomposed as a tensor product of indecomposable Hopf subalgebras. We shall now show that the Hopf algebraic analog of the Krull-Remak-Schmidt theorem asserting the uniqueness of such a decomposition also holds.  A version of this for completely reducible semisimple Hopf algebras was established in \cite{Bur:CFPHA}.  In general, it cannot be hoped that this result has a categorical version.  In \cite{Mue:SMC} it was shown that a non-degenerate fusion category factorizes into a product of prime ones, but that this was generally not unique.  Therefore, such decompositions are rather specific to Hopf algebras.

\begin{Lem}
  Let $f,g$ be bicommuting, binormal endomorphisms of a tensor
  indecomposable Hopf algebra $H$.

  If $f$ and $g$ are nilpotent, then so is $f*g$.
\end{Lem}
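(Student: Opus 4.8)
The plan is to reduce the statement, via Fitting's lemma, to a dichotomy for each of two auxiliary endomorphisms, and then kill the bad case with a binomial-type collapse. First dispose of the trivial case: if $H\cong I$ (the unit object of $\B$) then $\eta\epsilon$ is the only Hopf algebra endomorphism of $H$ and the claim is vacuous, so assume $H\not\cong I$ and suppose, for a contradiction, that $f*g$ is not nilpotent. I would begin by recording that $f*g$ is a \emph{binormal} Hopf algebra endomorphism of $H$: it is a bialgebra (hence Hopf) morphism by Lemma~\ref{usefulsimplecommutationfacts}(iii)(b), since $f,g$ are bialgebra morphisms with $f\comm g$ and $f\cocomm g$; and it is binormal because binormality passes to convolution products of bicommuting binormal morphisms. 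The latter I would verify using the characterization ``$h$ normal $\iff (hS)*\id$ is an algebra morphism'' (and its conormal dual), together with the identity $(f*g)S=(gS)*(fS)$ and the parts of Lemma~\ref{usefulsimplecommutationfacts} about convolutions of commuting algebra morphisms — or else by a direct graphical computation. Since $H$ is tensor indecomposable with both chain conditions, the corollary to Fitting's lemma then forces $f*g$ to be an automorphism; put $\sigma:=(f*g)^{-1}$, which is again a binormal Hopf automorphism (inverses of binormal automorphisms are binormal).

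Now the key reduction. As $\sigma$ is an algebra morphism it distributes over convolution, so
\[
\id_H=\sigma\circ(f*g)=(\sigma\circ f)*(\sigma\circ g).
\]
Write $\mu:=\sigma\circ f$ and $\nu:=\sigma\circ g$; these are binormal Hopf endomorphisms of $H$ — composites of binormal morphisms, and composition preserves (co)normality — satisfying $\mu*\nu=\id_H$. From $\mu*\nu=\id_H$ and $\mu^{*-1}=\mu\circ S$ one gets $\nu=(\mu\circ S)*\id_H$, and then a short computation using $\mu\circ S=S\circ\mu$ together with $\mu$ being an algebra and a coalgebra morphism shows $\mu\circ\nu=\nu\circ\mu$ (indeed both composites equal $(S\circ\mu^{\circ 2})*\mu$).

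To conclude, apply the corollary to Fitting's lemma to each of $\mu,\nu$: each is nilpotent or an automorphism. If $\mu$ is an automorphism, then $f=\sigma^{-1}\circ\mu$ is an automorphism; but $f$ is nilpotent, and a nilpotent automorphism of $H$ makes $\eta\epsilon$ idempotent \emph{and} invertible, hence $\eta\epsilon=\id_H$ and $H\cong I$, a contradiction. By symmetry $\nu$ is not an automorphism either. So $\mu^{\circ a}=\eta\epsilon$ and $\nu^{\circ b}=\eta\epsilon$ for some $a,b\in\N$. Expanding the identity $\id_H=(\mu*\nu)^{\circ k}$ into the convolution product of the $2^k$ composition words of length $k$ in $\mu$ and $\nu$ (legitimate since $\mu,\nu$ are algebra and coalgebra morphisms), and using $\mu\circ\nu=\nu\circ\mu$ to rewrite the word with $i$ occurrences of $\mu$ as $\mu^{\circ i}\circ\nu^{\circ(k-i)}$, one finds that for $k\ge a+b$ every such word equals $\eta\epsilon$: either $i\ge a$, so the $\mu$-part vanishes, or $k-i\ge b$, so the $\nu$-part vanishes. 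Hence $\id_H=\eta\epsilon$, so $H\cong I$, the final contradiction. Therefore $f*g$ is nilpotent.

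The conceptual crux — and the step I expect to be the real obstacle — is the middle paragraph: turning the ``sum'' $f*g$, once it is known to be an automorphism, into the equation $\mu*\nu=\id_H$ to which the nilpotent-or-automorphism dichotomy applies factorwise, and extracting from that equation the compatibility $\mu\circ\nu=\nu\circ\mu$. That commutativity is what powers the closing binomial collapse, and it genuinely fails for arbitrary commuting normal endomorphisms — as it must, since the lemma is false without indecomposability. The step requiring the most actual computation is verifying that $f*g$ is binormal; the composites $\sigma\circ f,\sigma\circ g$ are binormal essentially for free, but $f*g$ being normal is the Hopf-algebraic shadow of ``a sum of normal endomorphisms is normal'', which in the braided setting needs the commutativity hypotheses and a modest calculation. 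Everything else is bookkeeping.
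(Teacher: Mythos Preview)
Your proof is correct and follows the paper's strategy exactly: reduce via Fitting's corollary to $\mu*\nu=\id_H$, derive $\mu\circ\nu=\nu\circ\mu$ from $\nu=(\mu S)*\id_H$, and collapse $(\mu*\nu)^{\circ k}$ binomially. You are in fact more careful than the paper on one point---the paper tacitly assumes that after composing with $(f*g)^{-1}$ the new $f,g$ remain nilpotent, whereas you justify this by applying Fitting's corollary separately to $\mu$ and $\nu$ and ruling out the automorphism branch.
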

\begin{proof}
  Otherwise $f*g$ is a normal automorphism, and after composing $f$
  and $g$ with its inverse, we can assume that $f*g=\id_H$. In particular $f$
  composition commutes with $g$. Then one can show by induction that
  \begin{math}
    \id_H=(f*g)^n
  \end{math}
  is a convolution product of terms of the form $f^kg^{n-k}$ for
  $0\leq k\leq n$ (in fact this is a binomial formula with binomial
  coefficients, but writing it is cumbersome because addition is
  replaced with convolution products). If $f^m=\eta\epsilon=g^m$, this
  implies $(f*g)^{2m}=\eta\epsilon$, since each term contains an
  $m$-th power of either $f$ or $g$.
\end{proof}

\begin{Rem}
  Let $H$ and $H_1\dots H_k$ be Hopf algebras in $\B$. Decomposing $H$ as a tensor product Hopf algebra
  \begin{equation*}
    H\cong H_1\ot H_2\ot\dots\ot H_k
  \end{equation*}
  amounts to specifying a system of injections $\iota_i\colon H_i\to H$ and projections $\pi_i\colon H\to H_i$, all of them Hopf algebra morphisms, which commute and cocommute pairwise, and satisfy $\pi_i\iota_i=\id_{H_i}$, $\pi_i\iota_j=\eta\epsilon$ if $i\neq j$, and $\iota_1\pi_1*\iota_2\pi_2*\dots*\iota_k\pi_k=\id_H$. The isomorphisms between $H$ and the tensor product are then given by
  \begin{equation*}
    H\xrightarrow{\Delta^{(k-1)}}H^{\ot k}\xrightarrow{\pi_1\ot\dots\pi_k}H_1\ot\dots\ot H_k
  \end{equation*}
  and
  \begin{equation*}
    H_1\ot\dots\ot H_k\xrightarrow{\iota_1\ot\dots\ot\iota_k}H^{\ot k}\xrightarrow{\nabla^{(k-1)}}H.
  \end{equation*}
  Note that the $H_i$ need to be pairwise unbraided for the tensor product Hopf algebra to make sense.
\end{Rem}

\begin{Thm}
  Let $H$ be a Hopf algebra in $\B$, and let
  \begin{align*}
    H&=H_1\ot H_2\ot \dots \ot H_k\\
    &=G_1\ot G_2\ot\dots \ot G_\ell
  \end{align*}
  be two tensor decompositions of $H$ in tensor indecomposable factors.

  Then $k=\ell$, and $H_i\cong G_i$ after a suitable permutation of
  the indices.

  Moreover, if
  \begin{equation*} \xymatrix{H_i\ar@<.5ex>[r]^{\iota_i}&H\ar@<.5ex>[l]^{\pi_i}\ar@<.5ex>[r]^{p_j}&G_j\ar@<.5ex>[l]^{q_j}}
  \end{equation*}
  denote the systems of injections and projections going with the
  decompositions into tensor factors, then the factors can be so numbered that for any $1\leq m\leq k$
  \begin{equation}
    H\xrightarrow{\Delta^{(k-1)}}H^{\ot k}\xrightarrow{\pi_1\ot\dots\ot\pi_m\ot p_{m+1}\ot\dots\ot p_k}H_1\ot\dots H_m\ot G_{m+1}\ot\dots\ot G_k\label{eq:2}
    \end{equation}
    and
    \begin{equation}
    H_1\ot\dots H_m\ot G_{m+1}\ot\dots\ot G_k\xrightarrow{\iota_1\ot\dots\ot\iota_m\ot q_{m+1}\ot\dots\ot q_k}H^{\ot k}\xrightarrow{\nabla^{(k-1)}}H\label{eq:1}
  \end{equation}
  are isomorphisms.
\end{Thm}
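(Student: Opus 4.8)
The plan is to rephrase both decompositions in terms of endomorphisms of $H$ and then carry out the classical Krull--Remak--Schmidt exchange argument, with convolution in place of pointwise multiplication. Put $e_i=\iota_i\pi_i$ and $f_j=q_jp_j$; these are idempotent binormal Hopf endomorphisms of $H$, each family pairwise bicommuting, with $e_ie_{i'}=\eta\epsilon=f_jf_{j'}$ for $i\ne i'$, $j\ne j'$, and $e_1*\dots*e_k=\id_H=f_1*\dots*f_\ell$; conversely, as in the Remark preceding the theorem, such a system is precisely a tensor decomposition. For $0\le m\le k$ set
\[
\Phi_m:=(\iota_1\pi_1)*\dots*(\iota_m\pi_m)*(q_{m+1}p_{m+1})*\dots*(q_\ell p_\ell),
\]
which is exactly the composite of the map \eqref{eq:2} followed by the map \eqref{eq:1}, since $\nabla^{(n-1)}(g_1\ot\dots\ot g_n)\Delta^{(n-1)}=g_1*\dots*g_n$. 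I would prove, by induction on $m$, that after a suitable permutation of $G_1,\dots,G_\ell$ one has $\ell=k$, $H_i\cong G_i$ for $i\le m$, and $\Phi_m$ is a Hopf automorphism of $H$; the case $m=0$ is $\Phi_0=\id_H$, and $m=k$ is the theorem.

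The engine is a one-factor matching lemma, the Hopf-algebraic form of the fact that the endomorphisms of an indecomposable form a local (near-)ring. Fix $i$ and consider the endomorphisms $\gamma_j:=\pi_i f_j\iota_i$ of $H_i$, $j=1,\dots,\ell$. Each $\gamma_j$ is binormal, because binormality of $f_j$ is inherited through the Hopf morphisms $\iota_i,\pi_i$ once $\pi_i\iota_i=\id_{H_i}$: equivariance of Hopf morphisms for the (co)adjoint (co)actions then forces $\gamma_j$ to be $H_i$-(co)linear. The $\gamma_j$ pairwise bicommute, by applying the algebra morphism $\pi_i$ to $f_j\comm f_{j'}$, precomposing with $\iota_i$ via \Cref{usefulsimplecommutationfacts}, and dually. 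Moreover $\gamma_1*\dots*\gamma_\ell=\pi_i(f_1*\dots*f_\ell)\iota_i=\id_{H_i}$. Since $H_i$ is tensor indecomposable, the corollary to Fitting's lemma makes each $\gamma_j$ nilpotent or an automorphism, and the preceding lemma (a convolution product of nilpotent bicommuting binormal endomorphisms is nilpotent) shows they cannot all be nilpotent, as $\id_{H_i}$ is not. So some $\gamma_{j_0}=\pi_i q_{j_0}p_{j_0}\iota_i$ is an automorphism of $H_i$. Writing $A=\pi_i q_{j_0}$ and $B=p_{j_0}\iota_i$, we have $AB\in\operatorname{Aut}(H_i)$; then $BA=p_{j_0}\iota_i\pi_i q_{j_0}$ is not nilpotent (as $B(AB)^nA=(BA)^{n+1}$ is invertible and $H_i$ is nontrivial) and is a binormal endomorphism of the indecomposable $G_{j_0}$, so Fitting's corollary gives $BA\in\operatorname{Aut}(G_{j_0})$; hence $A,B$ are isomorphisms $H_i\cong G_{j_0}$.

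For the inductive step, assume $\Phi_{m-1}$ is a Hopf automorphism, so the map \eqref{eq:2} at level $m-1$ is an isomorphism $H\xrightarrow{\sim}X_{m-1}:=H_1\ot\dots\ot H_{m-1}\ot G_m\ot\dots\ot G_\ell$ (a genuine tensor-product Hopf algebra: the requisite pairwise unbraidedness of its factors is an output of Fitting's lemma at the earlier stages). First I would apply the matching lemma, read through this isomorphism, to the factor $H_m$; the cross-term $\pi_a\iota_m=\eta\epsilon$ for $a<m$ forces the matched native factor to be one of $G_m,\dots,G_\ell$, not an earlier $H_a$. After permuting $G_m,\dots,G_\ell$ one gets $H_m\cong G_m$ and an exchange automorphism $\theta$ of $X_{m-1}$ — assembled from the isomorphisms $A,B$ above and the idempotents $\iota_m\pi_m$, $q_mp_m$, in the manner of the group-theoretic exchange lemma — that carries the factor $G_m$ onto $H_m$ and fixes $G_{m+1},\dots,G_\ell$. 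Transporting the decomposition along $\theta$ replaces $G_m$ by $H_m$, giving $X_m:=H_1\ot\dots\ot H_m\ot G_{m+1}\ot\dots\ot G_\ell$ with $X_m\cong X_{m-1}\cong\dots\cong X_0=H$ (each step only swaps one tensor factor for an isomorphic one). Unwinding the maps then exhibits \eqref{eq:2} at level $m$ as a Hopf-algebra monomorphism $H\to X_m$ split by a Hopf morphism; since $X_m\cong H$ and both satisfy the chain conditions, a Hopf monomorphism between them cannot be proper (otherwise iterating it gives an infinite strictly descending chain of Hopf subalgebras), so it is an isomorphism, and dually for \eqref{eq:1}; thus $\Phi_m$ is an automorphism. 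Running the induction until the $G_j$ are used up forces $\ell=k$, since a surplus nontrivial $G_j$ would make $X_k\cong H$ impossible.

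I expect the main obstacle to be precisely this exchange step: showing that $\theta$ is a well-defined Hopf \emph{automorphism} — its constituent endomorphisms must be shown to bicommute, which hinges on the ``$\iota_m\pi_m q_mp_m$''-piece factoring through $q_mp_m$ and so inheriting its commutation with the remaining $G$-idempotents — and then tracking the (co)multiplications carefully enough to identify the induced isomorphism with the map \eqref{eq:2}, equivalently to conclude that $\Phi_m$ is an automorphism. This is the combinatorial heart of Krull--Remak--Schmidt; the matching lemma, the cross-term vanishing, and the final split-mono-to-isomorphism upgrade are comparatively routine given the results already established.
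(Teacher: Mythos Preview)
Your matching step is exactly the paper's: decompose $\id_{H_1}=\pi_1q_1p_1\iota_1*\dots*\pi_1q_\ell p_\ell\iota_1$, observe these are bicommuting binormal endomorphisms of the indecomposable $H_1$, and use the nilpotent-convolution lemma plus Fitting's corollary to find one that is an automorphism. The paper then normalizes so that $\pi_1q_1p_1\iota_1=\id_{H_1}$ (absorbing the automorphism), from which $p_1\iota_1$ and $\pi_1q_1$ are inverse isomorphisms; your route via Fitting on $G_{j_0}$ is equally valid.

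The divergence is the exchange step, and your acknowledged obstacle there is a genuine gap. The paper does not build an abstract $\theta$ and then upgrade a split mono via chain conditions. Instead it writes down the explicit Hopf endomorphism $t=\iota_1\pi_1q_1p_1*q_2p_2*\dots*q_\ell p_\ell$ of $H$ and proves directly that $t\in\Hopfaut(H)$: from $p_1\iota_1\pi_1q_1=\id_{G_1}$ one gets $p_1t=p_1$, hence $\rcofix Ht\subset\rcofix H{p_1}$, and a two-line computation then shows $\rcofix Ht$ is trivial, so $t$ is an automorphism by the results of \cref{sec:endom-that-are}. Since $t$ factors as the isomorphism $(p_1\ot\dots\ot p_\ell)\Delta^{(\ell-1)}$ followed by $(\pi_1q_1)\ot\id$ followed by the map \eqref{eq:1} at level $m=1$, that map is an isomorphism; and $t$ also induces $G_2\ot\dots\ot G_\ell\cong H_2\ot\dots\ot H_k$, which launches the induction. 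Your proposed alternative fails as written: the natural candidate \eqref{eq:1}$_m$ does not section \eqref{eq:2}$_m$ (the composite on $X_m$ has off-block entries $\pi_aq_b$ and $p_a\iota_b$, which need not vanish), and without already knowing $\Phi_m$ is an automorphism you have no independent reason for \eqref{eq:2}$_m$ to be monic. Building your $\theta$ concretely would essentially force you to rediscover the paper's $t$.
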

\begin{proof}
  There is nothing to show if one of the decompositions consists of
  only one factor. Otherwise we consider
  \begin{equation*}
    \id_{H_1}=\pi_1\iota_1=\pi_1(q_1p_1*\dots*q_\ell p_\ell)\iota_1
      =\pi_1q_1p_1\iota_1*\dots*\pi_1q_\ell p_\ell\iota_1.
  \end{equation*}
  Since $H_1$ is indecomposable, and the terms in the last convolution
  product are bicommuting binormal endomorphisms, we know
  that one of $\pi_1q_jp_j\iota_1$ is an isomorphism. Without loss of
  generality we assume this happens for $j=1$, and that
  $\pi_1q_1p_1\iota_1=\id_{H_1}$. It follows that $\pi_1q_1$ and
  $p_1\iota_1$ are mutually inverse isomorphisms between $H_1$ and
  $G_1$. Now put $f=q_2p_2*\dots*q_\ell p_\ell$ and
  $t=\iota_1\pi_1q_1p_1*f$. Since $p_1t=p_1\iota_1\pi_1q_1p_1=p_1$, we
  have $\rcofix Ht\subset\rcofix H{p_1t}=\rcofix H{p_1}$. Thus, for
  $j\colon\rcofix Ht\to H$ the inclusion, we find
  \begin{equation*}
    \Delta j=(H\ot q_1p_1*\dots*q_\ell p_\ell)\Delta j
    =(H\ot f)\Delta j
    =(H\ot t)\Delta j
    =(H\ot\eta)j,
  \end{equation*}
  and therefore $\rcofix Ht$ is trivial. We conclude that $t$ is an
  automorphism of $H$. 

  Write $\tilde\pi\colon H\to H_2\ot\dots\ot H_k=:\tilde H$ and
  $\tilde\iota\colon \tilde H\to H$ for the natural
  projection and injection morphisms, and similarly for $\tilde
  p\colon H\to \tilde G$, $\tilde q\colon \tilde G\to H$. Since $tq_1=\iota_1\pi_1q_1$, we
  have $\tilde\pi tq_1=\eta\epsilon$, and thus $\tilde\pi t=\tilde\pi
  t\tilde q\tilde p$ and $\tilde\pi t\tilde q\tilde
  pt\inv\tilde\iota=\tilde\pi\tilde\iota=\id_{\tilde H}$. It follows
  that $\tilde\pi t\tilde q$ and $\tilde p t\inv\tilde\iota$ are
  mutually inverse isomorphism between $\tilde G$ and $\tilde H$.

  Thus, by an inductive argument we have $k=\ell$, and we can rearrange the indices to get $H_i\cong G_i$ for all $i$.

  Note further that the automorphism $t$ above is the composition of the isomorphism
  \begin{equation*}
    H\xrightarrow{\Delta^{(k-1)}}H^{\ot k}\xrightarrow{p_1\ot\dots\ot p_k}G_1\ot\dots\ot G_k\xrightarrow{\pi_1q_1\ot G_2\ot\dots\ot G_k}H_1\ot G_2\ot\dots\ot G_k
  \end{equation*}
  with the morphism
  \begin{equation*}
    H_1\ot G_2\ot\dots\ot G_k\xrightarrow{\iota_1\ot q_2\ot\dots\ot q_k}H^{\ot k}\xrightarrow{\nabla^{(k-1)}}H,
  \end{equation*}
  whence the latter is an isomorphism. Again by an inductive argument, we get that \eqref{eq:1} is an isomorphism; the reasoning for \eqref{eq:2} is similar.
\end{proof}

%
%
\section{Endomorphisms of tensor products}
\label{sec:endo-tens-prod}

Let $H$ and $K$ be two Hopf algebras in $\B$, unbraided so that one
can form the tensor product bialgebra $H\ot K$. Let $A$ be an algebra in $\B$.
It is well-known that there is a bijection
\[\Alg(H\ot K,A)\cong\{(a,b)\in \Alg(H,A)\times\Alg(K,A)|a\comm b\}.\]
In fact, a pair $(a,b)$ of commutation commuting algebra morphisms induces $f=\nabla_A(a\ot b)$, and
\begin{equation*}
  \gbeg56
  \got2{H\ot K}\gvac1\got2{H\ot K}\gnl
  \gcn2122\gvac1\gcn2122\gnl
  \gwmuh528\gnl
  \gvac2\gbmp{f}\gnl
  \gvac2\gcl1\gnl
  \gvac2\gob1A\gend
= 
  \gbeg66
  \got1H\gvac1\got1K\got1H\gvac1\got1K\gnl
  \gcl1\gvac1\gbr\gvac1\gcl1\gnl
  \gwmu3\gwmu3\gnl
  \gvac1\gbmp a\gvac2\gbmp b\gnl
  \gvac1\gwmu4\gnl
  \gvac1\gob4A\gend
=
  \gbeg46
  \got1H\got1K\got1H\got1K\gnl
  \gcl1\gbr\gcl1\gnl
  \gbmp a\gbmp a\gbmp b\gbmp b\gnl
  \gmu\gmu\gnl
  \gwmuh426\gnl
  \gob4A\gend
=
  \gbeg46
  \got1H\got1K\got1H\got1K\gnl
  \gcl1\gcl1\gcl1\gcl1\gnl
  \gbmp a\gbmp b\gbmp a\gbmp b\gnl
  \gmu\gmu\gnl
  \gwmuh426\gnl
  \gob4A\gend
\end{equation*}
shows that $f$ is multiplicative. Conversely, given $f\colon H\ot K\to
A$ define $a=f(H\ot \eta)$ and $b=f(\eta\ot K)$. Then, with $T:=H\ot K$:
\begin{equation*}
  \gbeg25
  \got1H\got1K\gnl
  \gcl1\gcl1\gnl
  \gbmp a\gbmp b\gnl
  \gmu\gnl
  \gob2A\gend
=
  \gbeg45
  \got1H\gvac2\got1K\gnl
  \gcl1\gu1\gu1\gcl1\gnl
  \gdnot f\glmpt\grmptb\gdnot f\glmptb\grmpt\gnl
  \gvac1\gmu\gnl
  \gob4A\gend
=
  \gbeg67
  \got1H\gvac4\got1K\gnl
  \gcl2\gvac1\gu1\gu1\gvac1\gcl2\gnl
  \gvac2\gbr\gnl
  \gwmu3\gwmu3\gnl
  \gvac1\glmpt\gdnot f\gcmpb\gcmp\grmpt\gnl
  \gvac2\gcl1\gnl
  \gvac2\gob1A\gend
=f
\end{equation*}
and
\begin{equation*}
  \gbeg25
  \got1K\got1H\gnl
  \gcl1\gcl1\gnl
  \gbmp b\gbmp a\gnl
  \gmu\gnl
  \gob2A\gend
=
  \gbeg45
  \gvac1\got1K\got1H\gnl
  \gu1\gcl1\gcl1\gu1\gnl
  \gdnot f\glmpt\grmptb\gdnot f\glmptb\grmpt\gnl
  \gvac1\gmu\gnl
  \gob4A\gend
=
  \gbeg66
  \gvac2\got1K\got1H\gnl
  \gu1\gvac1\gbr\gvac1\gu1\gnl
  \gwmu3\gwmu3\gnl
  \gvac1\glmpt\gdnot f\gcmpb\gcmp\grmpt\gnl
  \gvac2\gcl1\gnl
  \gvac2\gob1A\gend
=
  \gbeg25
  \got1K\got1H\gnl
  \gbr\gnl
  \gbmp a\gbmp b\gnl
  \gmu\gnl
  \gob2A\gend
\end{equation*}

Assume that $A$ is a bialgebra in $\B$, and $a,b,f$ are as above. Then
$f$ is a bialgebra homomorphism if and only if $a$ and $b$ are.

Dually, for a coalgebra $C$ in $\B$, a bijection 
\[\left\{(a,b)\in\Coalg(C,H)\times\Coalg(C,H)\left|
 a\cocomm b
\right.\right\}\longrightarrow\Coalg(C,H\ot K)
\]
is given by $(a,b)\mapsto (a\ot b)\Delta$, and it induces bijections
on the subsets containing (pairs of) bialgebra maps.

Putting the above together, one obtains a bijection between
$\Hopfend(H\ot K)$ and 
\begin{equation*}
  \left\{(a,b,c,d)\left|\begin{array}{l}a\in\Hopfend(H),\ 
                                        b\in\Hopfmor(K,H),\\
                                        c\in\Hopfmor(H,K),\ 
                                        d\in\Hopfend(K),\\
a\comm b,c\comm d,
a\cocomm c,b\cocomm d
\end{array}
\right.\right\}
\end{equation*}
with the endomorphism of $H\ot K$ corresponding to a quadruple of Hopf
algebra map ``components'' given by
\begin{equation*}
  \begin{pmatrix}a&b\\c&d\end{pmatrix}
  :=
  \gbeg46
  \got2H\got2K\gnl
  \gcmu\gcmu\gnl
  \gcl1\gbr\gcl1\gnl  
  \gbmp a\gbmp b\gbmp c\gbmp d\gnl
  \gmu\gmu\gnl
  \gob2H\gob2K\gend.
\end{equation*}

Consider a second endomorphism $g$ of $H\ot K$ dissected analogously
into a matrix
\begin{math}
  \begin{pmatrix}
    a'&b'\\c'&d''
  \end{pmatrix}
\end{math}
of Hopf algebra endomorphisms. Then it is straightforward to check
that $gf$ corresponds to
\begin{math}
  \begin{pmatrix}
    a'a*b'c&a'b*b'd\\
    c'a*d'c&c'b*d'd
  \end{pmatrix}.
\end{math}

\begin{Prop}
  Let $H$ and $K$ be as above, and $f\in\Hopfend(H\ot K)$ described by a matrix $
  \begin{pmatrix}
    a&b\\c&d
  \end{pmatrix}$. Assume that the antipodes of $H$ and $K$ are automorphisms in $\B$.
  
  Then $f$ is normal if and only if $a$ and $c$ are normal, $b\comm\id_H$, and $d\comm\id_K$; a similar characterization holds for conormal endomorphisms.
\end{Prop}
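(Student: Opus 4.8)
The plan is to transfer the group-theoretic computation behind the analogous description of $\Hopfend(G\times H)$: there one tests the identity $f(xyx\inv)=xf(y)x\inv$ on the four cases obtained by letting $x$ and $y$ each range over one of the two direct factors, and reads off that the two diagonal entries of the matrix must be normal while the two off-diagonal ones must have central image. In the Hopf setting ``let $x$ range over a factor'' becomes ``precompose a slot of the adjoint action with one of the canonical embeddings $\iota_H=\id_H\ot\eta_K$, $\iota_K=\eta_H\ot\id_K$''.

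First I would record the shape of $\operatorname{ad}^{H\ot K}$. Because $H$ and $K$ are unbraided, the structure morphisms of $H\ot K$ are componentwise up to braiding, and a short graphical computation — in which, thanks to $\brd_{HK}=\brd_{KH}\inv$, the choice of braiding is immaterial — gives $\operatorname{ad}^{H\ot K}=(\operatorname{ad}^H\ot\operatorname{ad}^K)\circ\Sigma$ for the obvious regrouping $\Sigma\colon(H\ot K)^{\ot2}\to(H\ot H)\ot(K\ot K)$. Hence the $\iota_H(H)$- and $\iota_K(K)$-parts of this action commute; and since $\operatorname{ad}^{H\ot K}$ is an honest algebra action in its acting slot while $H\ot K$ is generated as an algebra by $\iota_H(H)\cup\iota_K(K)$ (indeed $\id_{H\ot K}=\iota_H\pi_H*\iota_K\pi_K$), left $(H\ot K)$-linearity of $f$ is equivalent to left linearity over $\operatorname{ad}^{H\ot K}\circ(\iota_H\ot\id)$ together with the same over $\operatorname{ad}^{H\ot K}\circ(\iota_K\ot\id)$ (one direction is a precomposition; the other is a diagram chase using associativity of the action and the factorization of a general acting input as a product of an $\iota_H(H)$- and an $\iota_K(K)$-element).

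Next I would unpack these two identities using the matrix form of $f$, composing with the target projections $\pi_H,\pi_K$ and — for the ``only if'' direction — also precomposing the acted-upon slot with $\iota_H$ or $\iota_K$, which produces precisely the four group-style cases. The $(\iota_H,\iota_H)$-case gives, on the $H$-component, normality of $a$, and on the $K$-component a weak ``$\operatorname{ad}^H$-triviality'' of $c$; symmetrically, the $(\iota_K,\iota_K)$-case gives normality of $d$ and a weak $\operatorname{ad}^K$-triviality of $b$. The mixed case $(\iota_H,\iota_K)$ forces $b$ to have central image in $H$, that is $b\comm\id_H$, and the opposite mixed case forces $c\comm\id_K$; here I use the first Proposition of \cref{sec:comm-cocomm-morph} together with the hypothesis that the antipodes of $H$ and $K$ — hence of $H\ot K$ — are automorphisms, which renders one-sided commutation conditions unambiguous (this is the sole use of the antipode hypothesis). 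Finally one observes that $b\comm\id_H$ already entails the $\operatorname{ad}^K$-triviality of $b$ and $c\comm\id_K$ the $\operatorname{ad}^H$-triviality of $c$ — a central image has trivial adjoint orbit — so the non-redundant conditions are exactly normality of $a$ and $d$ together with $b\comm\id_H$ and $c\comm\id_K$. Conversely, these four conditions make all four identities hold, so $f$ is normal by the reduction of the previous paragraph. The conormal characterization is the formal dual (adjoint action $\rightsquigarrow$ coadjoint coaction, $\nabla\rightsquigarrow\Delta$, embeddings $\rightsquigarrow$ projections), and binormality is the conjunction of the two.

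The main obstacle is the middle stretch: pinning down the product form of $\operatorname{ad}^{H\ot K}$ and then, working graphically, extracting clean and non-redundant conditions from the four braided identities — in particular verifying that the two centrality conditions subsume the a priori weaker triviality conditions, and, lacking a symmetric braiding, keeping straight the difference between $\comm$ and its opposite. No single step is deep; the cost is in the bookkeeping.
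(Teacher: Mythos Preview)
Your argument is sound and takes a genuinely different route from the paper's. You work directly with the definition of normality as $\operatorname{ad}$-linearity: you write $\operatorname{ad}^{H\ot K}$ as the tensor product of the two adjoint actions, reduce to linearity over $\iota_H(H)$ and $\iota_K(K)$ separately (using that the adjoint action is an algebra action in its first variable), and then test the acted-upon slot on $\iota_H$ and $\iota_K$ as well. One point you pass over quickly: testing the second slot on $\iota_H$ and $\iota_K$ suffices because the adjoint action makes $H\ot K$ a module \emph{algebra} and $f$ is an algebra map, so the linearity identity propagates to products; this is routine but worth saying. Your redundancy argument---that $b\comm\id_H$ forces $b\circ\operatorname{ad}^K=\epsilon\ot b$ because $b$ is a Hopf map and a central image is adjoint-fixed---is correct and is exactly what makes the list of conditions short.

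The paper instead invokes the characterization from \cref{sec:normal-endomorphisms} that $f$ is normal iff $f\comm(fS*\id_P)$. For the forward direction it sandwiches this commutation between the canonical projections and injections to extract each condition; for $b\comm\id_H$ it uses the identity \eqref{eq:3} rather than the adjoint action directly. For the converse it writes $f=\hat a*\hat b*\hat c*\hat d$ as a convolution of four bicommuting endomorphisms of $P$ and checks termwise that each commutes with $fS*\id_P$. This never writes $\operatorname{ad}^{H\ot K}$ explicitly and trades your four-case geometry for a list of elementary $\comm$-checks.

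Incidentally, your final conditions---$a$ and $d$ normal, $b\comm\id_H$, $c\comm\id_K$---agree with the paper's own proof computations and with the Remark immediately following the Proposition; the Proposition as printed has the roles of $c$ and $d$ interchanged, which is evidently a typo.
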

\begin{proof}
  We fix projections and injections for the tensor product $P:=H\ot K$:
  \begin{equation*} \xymatrix{H\ar@<.5ex>[r]^{\iota_H}&P\ar@<.5ex>[l]^{\pi_H}\ar@<.5ex>[r]^{\pi_K}&K\ar@<.5ex>[l]^{\iota_K}}
  \end{equation*}
  First assume that $f$ is normal. Since $f\comm (fS*\id_P)$, $a=\pi_Hf\iota_H$ commutes with $\pi_H(fS*\id_P)\iota_H=aS*\id_H$. Similarly $c$ is normal. Using \eqref{eq:3} we have
  \begin{equation*}
    \gbeg25
    \got1H\got1K\gnl
    \gbr\gnl
    \gbmp b\gcl1\gnl
    \gmu\gnl
    \gob2H\gend
    =
    \gbeg5{11}
    \got3H\got1K\gnl
    \gvac1\gcl1\gvac1\gcl1\gnl
    \gvac1\gbmp{\iota_H}\gvac1\gbmp{\iota_K}\gnl
    \gwcm3\gcl1\gnl
    \gcl1\gvac1\gbr\gnl
    \glmpt\gnot{\operatorname{ad}}\gcmp\grmptb\gcn2213\gnl
    \gvac2\gbmp f\gnl
    \gvac2\gwmu3\gnl
    \gvac3\gbmp{\pi_H}\gnl
    \gvac3\gcl1\gnl
    \gvac3\gob1H\gend
    =
    \gbeg38
    \got1H\gvac1\got1K\gnl
    \gcl1\gvac1\gcl1\gnl
    \gbmp{\iota_H}\gvac1\gbmp{\iota_K}\gnl
    \gcl1\gvac1\gbmp f\gnl
    \gwmu3\gnl
    \gvac1\gbmp{\pi_H}\gnl
    \gvac1\gcl1\gnl
    \gob3H\gend
    =
    \gbeg25
    \got1H\got1K\gnl
    \gcl2\gcl1\gnl
    \gvac1\gbmp b\gnl
    \gmu\gnl
    \gob2H\gend
  \end{equation*}
  so that $b\comm\id_H$. Similarly $d\comm\id_K$.

  Now suppose that the stated normality and commutation conditions on $a,b,c,d$ hold. Writing $\hat a=\iota_Ha\pi_H,\hat b=\iota_Hb\pi_K$ etc.\ we can write $f=\hat a*\hat b*\hat c*\hat d$ as a convolution product of four commuting and cocommuting endomorphisms of $P$. We are claiming that this product commutes with
  \begin{equation*}
    fS*\id_P=\hat aS*\hat bS*\hat cS*\hat dS*\iota_K\pi_K*\iota_H\pi_H
      =\hat bS*\hat cS*\hat dS*\iota_K\pi_K*\hat aS*\iota_H\pi_H.
  \end{equation*}
  (the last equality using that $\hat a$ bicommutes with $\hat b,\hat c,\hat d,$ and $\iota_K$.)
  Now $\hat a$ commutes with $\hat aS*\iota_H\pi_H$ since $a$ is normal, with $\hat bS$ since $a\comm b$, and with $\iota_K\pi_K$ and $\hat cS$ since $\iota_H\comm\iota_K$. The next factor $\hat b$ commutes with $\hat aS$, $\hat bS$, and $\iota_H\pi_H$ since $b\comm\id_H$, and it commutes with $\hat cS,\hat dS,$ and $\iota_K\pi_K$, since $\iota_K\comm\iota_H$. Similar arguments deal with the convolution factors $\hat c$ and $\hat d$.
\end{proof}

\begin{Rem}
  Similarly, an endomorphism $f$ of a tensor product of several pairwise unbraided Hopf algebras $H_1,\dots,H_k$ can be described by a matrix $(v_{ij})$ of Hopf algebra homomorphisms between the factors. By inductive arguments one can show that $f$ is normal iff all the diagonal terms are normal, and the off-diagonal terms commute with the identities on their targets.

  An interesting case arises when there are no nontrivial homomorphisms $H_i\to H_j$ commuting with $\id_{H_j}$. In this case any normal endomorphism preserves the decomposition into tensor factors. One can deduce from this that the Krull-Remak-Schmidt decomposition is unique in a stronger sense than up to permutation and isomorphism; in the original case of decompositions of groups the uniqueness of the subgroups in a direct decomposition into directly indecomposable factors follows as stated in Remak's thesis \cite{Rem:ZEGDUF}.
\end{Rem}

%
%
\section{Automorphisms of tensor products}
\label{sec:autom-tens-prod}
We consider now the automorphisms of tensor products of Hopf algebras.  These are the natural extensions of the corresponding results in group theory \cite{BidCurMcC:ADPFG,Bid:ADPFG2}.

Throughout this section we let $H$ and $K$ be unbraided Hopf algebras, so that we can form the tensor product $H\ot K$, and we assume that the antipodes of $H$ and $K$ are automorphisms in $\B$.

Identify endomorphisms of $H\ot K$ with matrices of Hopf algebra homomorphisms as in \cref{sec:endo-tens-prod}.
Let
\begin{math}
  \begin{pmatrix}
    a&b\\c&d
  \end{pmatrix}\in\Hopfend(H\ot K).
\end{math}
If $a$ is an automorphism, then by \cref{compositescommute} of \cref{usefulsimplecommutationfacts} the condition $a\comm b$ implies $\id_H\comm b$ (and $x\comm b$ for any $x\colon X\to H$). Similarly $\id_K\cocomm c$, and, if $d$ is also an automorphism, $b\cocomm\id_H$ and $c\comm \id_K$.

Define \[\mathcal{A} = \begin{pmatrix}
      \Hopfaut(H)&\Zenthom(K,H)\\
      \Zenthom(H,K)&\Hopfaut(K)
    \end{pmatrix},\]
where $\Zenthom(K,H):=\{b\in\Hopfmor(K,H)|b\comm\id_H\text{ and
}b\cocomm\id_H\}$.  This is easily seen to be an abelian group under convolution product.  Indeed, the image of any such morphism determines an abelian Hopf sub-algebra of $H$.  Note that $b\comm \id_H$ $\iff$ $b\comm \alpha$ for some/all $\alpha\in\Hopfaut(H)$, and similarly $b\cocomm \id$ $\iff$ $b\cocomm \alpha$ for some/all $\alpha\in\Hopfaut(H)$.

Consider an automorphism $f$ of $H\ot K$, and its decomposition as a matrix 
$\begin{pmatrix}
  a&b\\c&d
\end{pmatrix}$
of Hopf algebra homomorphisms as in \cref{sec:endo-tens-prod}. Let
$f\inv$ correspond in the same way to a matrix
\begin{math}
  \begin{pmatrix}
    a'&b'\\c'&d'
  \end{pmatrix}.
\end{math}
Then we have $\id_K=(\epsilon \ot K)f\inv f(\eta\ot K)=c'b*d'd$. Since
$c'\comm d'$ and $b\cocomm d$, we have that $c'b\comm
d'd=(c'bS)*\id_K$ and $c'b\cocomm (c'bS)*\id_K$. In other words, $c'b$
is a binormal endomorphism of $K$. In the same way $bc'$ is
a binormal endomorphism of $H$. If we assume both chain conditions on $H$ and $K$, then for sufficiently large $n$, $b$
and $c'$ induce mutually inverse isomorphisms between the images of
$(c'b)^n$ and $(bc')^n$. Thus, using Fittings Lemma, the image of
$(c'b)^n$ is a common tensor factor of $H$ and $K$. 

This gives part of the following result.

\begin{Thm}
  Suppose that $H$ and $K$ satisfy both chain conditions. Then $\mathcal{A}\subseteq \Hopfaut(H\ot K)$ if and only if $H$ and $K$ have no non-trivial common abelian direct tensor factors.  On the other hand, $\Hopfaut(H\ot K)=\mathcal{A}$ if and only if $H$ and $K$ have no non-trivial common direct tensor factors.
\end{Thm}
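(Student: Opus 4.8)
The plan is to prove the four implications of the two biconditionals separately, working throughout with the matrix description $f\leftrightarrow\left(\begin{smallmatrix}a&b\\c&d\end{smallmatrix}\right)$ of \cref{sec:endo-tens-prod}. The central bookkeeping device is a Schur‑complement reduction: if the diagonal entry $a$ of $f\leftrightarrow\left(\begin{smallmatrix}a&b\\c&d\end{smallmatrix}\right)$ already lies in $\Hopfaut(H)$, then left multiplying $f$ by the invertible matrices $\left(\begin{smallmatrix}a\inv&\eta\epsilon\\\eta\epsilon&\id_K\end{smallmatrix}\right)$ and $\left(\begin{smallmatrix}\id_H&\eta\epsilon\\ cS_H&\id_K\end{smallmatrix}\right)$ brings it to $\left(\begin{smallmatrix}\id_H&a\inv b\\\eta\epsilon&\delta\end{smallmatrix}\right)$ with $\delta=(cS_Ha\inv b)*d$, and factoring this through elementary unitriangular and diagonal matrices shows $f\in\Hopfaut(H\ot K)$ if and only if $\delta\in\Hopfaut(K)$. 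Recall also, from the paragraph preceding the statement, that once $a$ and $d$ are automorphisms the entries automatically satisfy $b\in\Zenthom(K,H)$, $c\in\Zenthom(H,K)$; thus $\mathcal A$ is exactly the set of matrices with invertible diagonal, and the question is to decide when an invertible matrix has invertible diagonal, resp.\ when invertible diagonal forces invertibility.

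The ``only if'' halves are handled by contraposition. If $A\ne I$ is abelian with $H\cong A\ot H'$ and $K\cong A\ot K'$, then since the image of a tensor factor is central and cocentral, the composites $b\colon K\twoheadrightarrow A\hookrightarrow H$ and $c\colon H\twoheadrightarrow A\hookrightarrow K$ lie in $\Zenthom(K,H)$, $\Zenthom(H,K)$, so $\left(\begin{smallmatrix}\id_H&b\\c&\id_K\end{smallmatrix}\right)\in\mathcal A$; its Schur complement is $\delta=(cS_Hb)*\id_K$, which a direct computation (using $S_H\iota_A^H=\iota_A^HS_A$, $\pi_A^H\iota_A^H=\id_A$, $S_A*\id_A=\eta\epsilon$ and $\id_K=\iota_A^K\pi_A^K*\iota_{K'}\pi_{K'}$) evaluates to $\iota_{K'}\pi_{K'}$, an idempotent with proper image $K'$, hence not an automorphism; so $\mathcal A\not\subseteq\Hopfaut(H\ot K)$. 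If instead $A\ne I$ is an arbitrary common tensor factor, the automorphism of $H\ot K\cong A\ot H'\ot A\ot K'$ transposing the two copies of $A$ has diagonal entries $\iota_{H'}\pi_{H'}$ and $\iota_{K'}\pi_{K'}$, which are not automorphisms, so it lies outside $\mathcal A$ and $\Hopfaut(H\ot K)\ne\mathcal A$.

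Two lemmas drive the ``if'' halves. \emph{(L1)} If $\beta\in\Zenthom(K,H)$, $\gamma\in\Zenthom(H,K)$ and $H,K$ have no nontrivial common abelian tensor factor, then $\gamma\beta$ and $\beta\gamma$ are nilpotent: both are Hopf endomorphisms whose images lie in the central‑and‑cocentral — hence abelian — part of $K$, resp.\ $H$, so by the computation of \cref{sec:normal-endomorphisms} they are binormal; Fitting's lemma (\cref{sec-fitting}) then makes $\Img((\gamma\beta)^n)$ and $\Img((\beta\gamma)^n)$ abelian tensor factors of $K$ and $H$ for $n\gg0$, between which $\beta$ and $\gamma$ restrict to mutually inverse isomorphisms, so they constitute a common abelian tensor factor and are therefore trivial. \emph{(L2)} If $\phi$ is a Hopf endomorphism of a Hopf algebra $L$ with both chain conditions and $(\phi S_L)*\id_L$ is nilpotent, then $\phi\in\Hopfaut(L)$: with $\psi=(\phi S_L)*\id_L=\phi^{*-1}*\id_L$ one has $\phi*\psi=\id_L$ (as $\phi$ is multiplicative), so if $t\colon\lcofix L\phi\hookrightarrow L$ is the defining equalizer then applying $\id_L\ot\psi$ to $(\phi\ot\id_L)\Delta t=\eta\ot t$ and then $\nabla_L$ gives $t=(\phi*\psi)t=\psi t$; iterating, $t=\psi^{\circ m}t=\eta\epsilon\,t$ for $m\gg0$, so $t$ factors through $\eta_L$, whence $\lcofix L\phi=I$ and $\phi$ is monic in $\B$ by the proposition of \cref{sec:endom-that-are}, hence an automorphism by the chain conditions.

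Granting these, the abelian hypothesis yields $\mathcal A\subseteq\Hopfaut(H\ot K)$: for $f\in\mathcal A$ the Schur complement satisfies $\delta d\inv=(c\beta')*\id_K$ with $\beta'=S_Ha\inv b\,d\inv\in\Zenthom(K,H)$, so by \emph{(L1)} $h:=c\beta'$ is a nilpotent Hopf endomorphism of $K$ with central‑and‑cocentral image; by \cref{usefulsimplecommutationfacts} $h*\id_K$ is again a Hopf endomorphism, and $(\,(h*\id_K)S_K)*\id_K$ equals $hS_K$ after a reordering that uses centrality and $S_K*\id_K=\eta\epsilon$, hence is nilpotent, so \emph{(L2)} makes $h*\id_K=\delta d\inv$, and therefore $\delta$ and $f$, automorphisms. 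For $\Hopfaut(H\ot K)=\mathcal A$ under the no‑common‑factor hypothesis (which includes the abelian case, so $\mathcal A\subseteq\Hopfaut(H\ot K)$ already) it remains to show $\Hopfaut(H\ot K)\subseteq\mathcal A$: for $f\leftrightarrow\left(\begin{smallmatrix}a&b\\c&d\end{smallmatrix}\right)$, $f\inv\leftrightarrow\left(\begin{smallmatrix}a'&b'\\c'&d'\end{smallmatrix}\right)$, the discussion preceding the statement makes $\Img((c'b)^n)$ a common tensor factor, hence trivial, so $c'b$ — and then $bc',b'c,cb'$ — is nilpotent; the $(1,1)$‑entries of $f\inv f=\id$ and $ff\inv=\id$ read $a'a*b'c=\id_H=aa'*bc'$, so $(\,(a'a)S_H)*\id_H=b'c$ and $(\,(aa')S_H)*\id_H=bc'$ are nilpotent, whence by \emph{(L2)} $a'a,aa'\in\Hopfaut(H)$, and $a'a$ monic together with $aa'$ epic forces $a\in\Hopfaut(H)$; the $(2,2)$‑entries give $d\in\Hopfaut(K)$ likewise, so $f\in\mathcal A$. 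The main obstacle throughout is the interplay between composition and convolution of endomorphisms: Lemma \emph{(L2)} together with the Schur reduction is precisely the device that converts the statement ``$f$ is invertible'' into ``a single defect endomorphism is nilpotent'', and verifying the nilpotency of that defect (and that it has the right central/cocentral behaviour so the convolution manipulations are legal) is the delicate point in each case.
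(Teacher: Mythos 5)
Your proof is correct, and while it follows the same overall strategy as the paper's (the matrix calculus of \cref{sec:endo-tens-prod}, Fitting's lemma turning the off-diagonal composites into common abelian tensor factors, and explicit witnesses for the two contrapositives), it is organized rather differently in two respects that are worth recording. First, your Schur-complement reduction --- left-multiplying by $\left(\begin{smallmatrix}a\inv&\eta\epsilon\\\eta\epsilon&\id\end{smallmatrix}\right)$ and $\left(\begin{smallmatrix}\id&\eta\epsilon\\cS&\id\end{smallmatrix}\right)$ and then peeling off a unitriangular factor to reach $\id_H\ot\delta$ --- systematizes what the paper does ad hoc (multiplying by $\operatorname{diag}(a\inv,d\inv)$ and then by $g=\left(\begin{smallmatrix}\id&bS\\cS&\id\end{smallmatrix}\right)$); it also lets you handle the ``common abelian factor $\Rightarrow\mathcal A\not\subseteq\Hopfaut$'' direction by exhibiting the complement $\delta=\iota_{K'}\pi_{K'}$ as a proper idempotent, where the paper instead shows $L$ sits inside the $\psi$-coinvariants of an $S_L$-twisted matrix. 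Second, and more substantively, your Lemma (L2) isolates and actually proves the step ``if $(\phi S)*\id$ is nilpotent under composition then $\phi$ is an automorphism,'' which the paper uses twice without justification (once in the bare assertion ``in which case $\id*bcS$ and $cbS*\id$ are automorphisms,'' and once, in disguised form, in the coinvariants computation of the Krull--Remak--Schmidt proof); applying (L2) to $a'a$, $aa'$, $d'd$, $dd'$ also replaces the paper's argument for $\Hopfaut(H\ot K)\subseteq\mathcal A$, which instead shows directly that a nontrivial $d$-coinvariant subobject forces $\Img((c'b)^n)$ to be nontrivial. The only caveats are ones you share with the paper: the elementary matrices $\left(\begin{smallmatrix}\id&\eta\epsilon\\cS&\id\end{smallmatrix}\right)$, etc., are legitimate Hopf endomorphisms only because the relevant off-diagonal entries lie in $\Zenthom$ (so $cS=Sc$ is again a Hopf map with central, cocentral, abelian image), your phrase ``the image of a tensor factor is central'' is only true because that factor is assumed abelian, and the reordering $((h*\id)S)*\id=hS$ silently uses that $h$ is central \emph{and} cocentral so that Lemma \ref{usefulsimplecommutationfacts}(ii)(c) applies; all of these are satisfied where you invoke them.
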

\begin{proof}
  If $H$ and $K$ have a common non-trivial direct tensor factor, then permutations of this factor in $H\ot K$ are automorphisms of $H\ot K$ not contained in $\mathcal{A}$.
	
  By the preceeding remarks, to show $\Hopfaut(H\ot K)\subseteq \mathcal{A}$ it remains to prove that the common tensor factor in $H\ot K$ that we found is necessarily nontrivial if $d$ is not an automorphism. A similar argument will apply to show that $a$ is an automorphism, and the commutation and cocommutation conditions for the components of an endomorphism will be equivalent to the off-diagonal terms (co)commuting with the identity instead of the automorphisms on the diagonal.

  Thus suppose that $d$ is not an automorphism. Then we can assume without loss of generality that the right $d$-coinvariant subobject  $D$ of $H$ is nontrivial. If $\iota\colon D\to H$ is the inclusion, then $c'b\iota=\nabla(c'b\ot\eta)\iota=\nabla(c'b\ot d'd)\Delta\iota=(c'b*d'd)\iota=\iota$, hence $(c'b)^n\iota=\iota$ for all $n$, and the image of $(c'b)^n$ is nontrivial as desired.
	
The desired equality in the second part will then hold once we have proven the first equivalence.

	To this end we first consider the forward direction by contrapositive.  Suppose that $H$ and $K$ have a common abelian direct tensor factor $L$, and write $H=H'\ot L$ and $K=K'\ot L$.  Since $L$ is abelian its antipode $S_L$ is a Hopf endomorphism of $L$.  Taking $a=\id_H,d=\id_K$, $b=\eta_{K'}\varepsilon_{K'}\ot S_L$ and $c=\eta_{H'}\varepsilon_{H'}\ot S_L$ we find that $\psi=\begin{pmatrix} a&b\\c&d \end{pmatrix}\in\mathcal{A}$.  However, $L$ is a sub-object of the right $\psi$-coinvariant subobject, whence $\psi\not\in\Hopfaut(H\ot K)$.

  For the remaining direction, assume that $f=
  \begin{pmatrix}
    a&b\\c&d
  \end{pmatrix}$ belongs to $\mathcal{A}$; in particular $f$ is a Hopf algebra endomorphism of $H\ot K$. After multiplying with the obvious automorphism $
  \begin{pmatrix}
    a\inv&\eta\epsilon\\\eta\epsilon&d\inv
  \end{pmatrix}$ of $H\ot K$ we may assume that $a=\id_H$ and $d=\id_K$. Now consider $g=
  \begin{pmatrix}
    \id_H&bS\\cS&\id_K
  \end{pmatrix}$, another Hopf endomorphism of $H\ot K$. One computes
  $gf=
  \begin{pmatrix}
    \id*bcS&b*bS\\cS*c&cbS*\id
  \end{pmatrix}
  =
  \begin{pmatrix}
    \id*bcS&\eta\epsilon\\\eta\epsilon&cbS*\id
  \end{pmatrix}.$
  By the chain conditions on $H$ and $K$, for $n$ sufficiently large $b$ and $c$ induce mutually inverse isomorphisms between the images of $(bc)^n$ and $(cb)^n$. Fitting's lemma implies that these isomorphic images are an abelian common tensor factor of $H$ and $K$. It can only be trivial if $bc$ and $cb$ are nilpotent, in which case $\id*bcS$ and $cbS*\id$ are automorphisms. In the latter case, $f$ was an automorphism.
\end{proof}

These results have obvious extensions to more than two factors by induction, which we leave to the reader.  The results, however, do not cover the case of a repeated tensor factor.  For a given Hopf algebra $H$ in $\B$ we can form the unbraided iterated tensor product $H^{\ot n}=H\ot\cdots\ot H$ for $n\in\N$ precisely when $H$ is in a (sub)category where the braiding is a symmetry.

\begin{Thm}
	Let $H$ be a tensor indecomposable non-abelian Hopf algebra satisfying both chain conditions in $\B$, and suppose the braiding of $\B$ is a symmetry.  Fix $n\in\N$, and let $\mathcal{A}_n$ denote those $(\alpha_{ij})\in\Hopfend(H^{\ot n})$ such that $\alpha_{ii}\in\Hopfaut(H)$ and $\alpha_{ij}\in\Hopfend(H)$ for all $i$ and $j\neq i$.  Then
	\[\Hopfaut(H^{\ot n}) \cong \mathcal{A}_n\rtimes S_n.\]
\end{Thm}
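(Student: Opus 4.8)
The plan is to use the matrix calculus for $\Hopfend(H^{\ot n})$ from \cref{sec:endo-tens-prod} (developed for two factors and sketched for several). Write $P=H^{\ot n}$ with canonical injections $\iota_i\colon H\to P$ and projections $\pi_i\colon P\to H$; every $f\in\Hopfend(P)$ is recorded by the matrix $(\alpha_{ij})$ with $\alpha_{ij}=\pi_j f\iota_i$, composition corresponds to ``matrix multiplication'' with convolution replacing addition, and each $\sigma\in S_n$ gives an automorphism $\tau_\sigma$ of $P$ permuting the tensor factors (here one uses that the braiding of $\B$ is a symmetry), whose matrix is the permutation matrix. First I would record the bookkeeping: $\sigma\mapsto\tau_\sigma$ is an injective homomorphism $S_n\to\Hopfaut(P)$ (as $H\neq\kk$), conjugation by $\tau_\sigma$ permutes rows and columns of a matrix simultaneously and hence preserves $\mathcal A_n$, and $\tau_\sigma\in\mathcal A_n$ forces $\sigma=\id$ since the off-diagonal entries of its matrix are $\eta\epsilon$, not automorphisms. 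Since the braiding is a symmetry, $\brd_{HH}$ has finite order, so by the lemma closing \cref{sec:endom-that-are} the antipode of $H$ is an automorphism; I use this freely.

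The substance is to show (a) $\mathcal A_n$ is a subgroup of $\Hopfaut(P)$, and (b) every $f\in\Hopfaut(P)$ is of the form $g\tau_\sigma$ with $g\in\mathcal A_n$, $\sigma\in S_n$. For (a) the key observation is that in a matrix $(\alpha_{ij})\in\mathcal A_n$ each off-diagonal entry bicommutes with $\id_H$: the (co)commutation relations making $(\alpha_{ij})$ an endomorphism of $P$ give $\alpha_{ii}\cocomm\alpha_{ij}$ and $\alpha_{jj}\comm\alpha_{ij}$ for $i\neq j$, and composing with the inverses of the diagonal automorphisms (by \cref{compositescommute} and its dual) yields $\id_H\cocomm\alpha_{ij}$ and $\id_H\comm\alpha_{ij}$. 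A morphism $\psi$ that bicommutes with $\id_H$ is binormal (its image is a central Hopf subalgebra, so the adjoint action is trivial there, and dually) and commutes and cocommutes with every morphism into or out of $H$. The crux is then: \emph{if $\phi\in\Hopfaut(H)$ and $\psi$ bicommutes with $\id_H$, then $\phi*\psi\in\Hopfaut(H)$.} Since $\phi^{-1}\circ(\phi*\psi)=\id_H*(\phi^{-1}\psi)$ and $\phi^{-1}\psi$ again bicommutes with $\id_H$, it suffices to treat $\id_H*\chi$ with $\chi$ binormal and bicommuting with $\id_H$. Apply Fitting's lemma (\cref{sec-fitting}) to the binormal $\chi$ on the tensor-indecomposable $H$: one of $\Img(\chi^m)$, $\lcofix H{\chi^m}$ is trivial; if $\lcofix H{\chi^m}$ were trivial then $H\cong\Img(\chi^m)$ exhibits $H$ as a quotient of itself, so by the chain conditions $\chi^m$, hence $\chi$, is an isomorphism, making $H=\Img\chi$ both commutative and cocommutative (as $\chi\comm\id_H$ and $\chi\cocomm\id_H$), contradicting that $H$ is non-abelian; so $\chi$ is nilpotent, and then $\id_H*\chi$ has an explicit finite convolution inverse $\id_H*(\chi S)*\chi^{2}*(\chi^{3}S)*\cdots$ (the $\chi^j$ iterated composites, pairwise convolution-commuting), verified by the telescoping computation familiar from the group case. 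Granting the crux, I would run Gaussian elimination on $(\alpha_{ij})\in\mathcal A_n$: an elementary matrix with a single off-diagonal entry $\mu$ bicommuting with $\id_H$ lies in $\mathcal A_n$ and is an automorphism (inverse: replace $\mu$ by $\mu S$); using these one clears the first row and column, the diagonal entries that change remain automorphisms by the crux, and the resulting block matrix $\bigl(\begin{smallmatrix}\alpha_{11}&0\\0&g'\end{smallmatrix}\bigr)$ is handled by induction on $n$ applied to $g'\in\mathcal A_{n-1}$. The same computations show closure of $\mathcal A_n$ under composition and inverses.

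For (b) I would apply the Krull--Remak--Schmidt theorem of \cref{sec:krull-remak-schmidt} to the two tensor decompositions of $P$ into the tensor-indecomposable factor $H$ given by the canonical $(\iota_i,\pi_i)$ and by its $f$-image $(f\iota_i,\pi_i f^{-1})$; its proof produces a permutation $\sigma$ after which the mixed composites $\pi_{\sigma(i)}f\iota_i$ are all automorphisms of $H$, i.e.\ $\tau_\sigma^{-1}f\in\mathcal A_n$. (Alternatively one first proves that for each $i$ some $\pi_j f\iota_i$ is an automorphism, by expanding $\id_H=\pi_i f^{-1}\bigl(\iota_1\pi_1*\cdots*\iota_n\pi_n\bigr)f\iota_i$ as a convolution of the pairwise bicommuting binormal endomorphisms $\pi_i\bigl(f^{-1}\iota_k\pi_k f\bigr)\iota_i$ of $H$---binormal because conjugation by $f$ and restriction along $\iota_i,\pi_i$ preserve (co)normality, bicommuting because the $\iota_k\pi_k$ are---invoking the nilpotency lemma opening \cref{sec:krull-remak-schmidt} and the corollary to Fitting's lemma, then a Radford-biproduct argument to upgrade ``split mono'' to ``isomorphism''; but Krull--Remak--Schmidt already packages the matching one needs.) Finally, the map $\mathcal A_n\rtimes S_n\to\Hopfaut(P)$, $(g,\sigma)\mapsto g\tau_\sigma$ for the evident permutation action of $S_n$ on $\mathcal A_n$, is a homomorphism by $g\tau_\sigma g'\tau_{\sigma'}=g(\tau_\sigma g'\tau_\sigma^{-1})\tau_{\sigma\sigma'}$, surjective by (b), and injective since $g\tau_\sigma=\id_P$ forces $\tau_\sigma=g^{-1}\in\mathcal A_n$, whence $\sigma=\id$ and $g=\id_P$.

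I expect the crux lemma of part (a) to be the main obstacle: it is exactly where both tensor-indecomposability and the non-abelian hypothesis are indispensable---for $H=\kk[\Z/2]$ the matrix $\bigl(\begin{smallmatrix}\id&\id\\\id&\id\end{smallmatrix}\bigr)$ lies in $\mathcal A_2$ yet is not invertible---and it is the point that genuinely requires Fitting's lemma rather than formal matrix algebra; the surrounding Gaussian-elimination and semidirect-product bookkeeping is routine but lengthy.
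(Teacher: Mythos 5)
Your proposal is correct, and its skeleton --- the matrix calculus for $\Hopfend(H^{\ot n})$, the permutation action of $S_n$, and the use of Fitting's lemma together with tensor indecomposability and non-abelianness to control the diagonal entries --- coincides with the paper's. Two points of genuine divergence are worth recording. First, the paper disposes of the inclusion $\mathcal{A}_n\subseteq\Hopfaut(H^{\ot n})$ with the single phrase ``by assumptions on $H$''; your crux lemma (if $\phi\in\Hopfaut(H)$ and $\psi$ bicommutes with $\id_H$ then $\phi*\psi\in\Hopfaut(H)$, because $\psi$ is then binormal, hence by Fitting's lemma nilpotent or an isomorphism, and the isomorphism case would force $H$ to be abelian) together with the Gaussian-elimination bookkeeping is precisely the $n$-factor elaboration of the last paragraph of the proof of the two-factor theorem in \cref{sec:autom-tens-prod}, and is where the hypotheses are actually consumed --- your $\kk[\Z/2]$ example correctly locates the role of non-abelianness. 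Second, for surjectivity onto $\mathcal{A}_nS_n$ the paper expands $\id_{H_i}$ as the convolution of the bicommuting binormal composites $\alpha_{ik}\alpha'_{ki}$ and extracts, for each $i$, a unique $k$ with $\alpha_{ik}$ an automorphism; your primary route instead feeds the canonical decomposition and its transport along $f$ into the Krull--Remak--Schmidt theorem of \cref{sec:krull-remak-schmidt}. Since that theorem's proof opens with the very same convolution expansion, the substance is identical, but invoking it as a black box buys you the permutation $\sigma$ and the invertibility of the mixed composites in one stroke, without a separate argument (which the paper runs via non-abelianness) that each row contains at most one automorphism entry; your sketched alternative is the paper's argument essentially verbatim.
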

\begin{proof}
	By assumptions on $H$, $\mathcal{A}_n\subseteq\Hopfaut(H^{\ot n})$.  The group $S_n$ acts on $H^{\ot n}$ by permuting factors, and so acts on $\Hopfaut(H^{\ot n})$ by permuting columns.  Conjugating by this action sends $\mathcal{A}_n$ to itself.  We need only show that every automorphism is a column permutation of an element of $\mathcal{A}_n$.
	
	So let $(\alpha_{ij})\in\Hopfaut(H^{\ot n})$, with inverse $(\alpha_{ij}')$.  Then for all $i$ we have $\alpha_{i1}\alpha_{1i}'*\cdots*\alpha_{in}\alpha_{ni}'=\id$.  Since the $\alpha_{ik}\alpha_{ki}'$ are all binormal endomorphisms the notation is unambiguous, and the terms of the convolution product can be arbitrarily reordered.  Moreover, since $H$ is indecomposable we may conclude that one of the $\alpha_{ik}\alpha_{ki}'$ is an automorphism.  In particular for all $i$ there is a $k$ such that $\alpha_{ik}$ is an epimorphism and $\alpha_{ki}'$ is a monomorphism.  By the chain conditions it follows that $\alpha_{ik}$ and $\alpha_{ki}'$ are both automorphisms.  Since $H$ is non-abelian there is at most one such $k$ for any given $i$.  This completes the proof.
\end{proof}

%
%
\section{Application to doubles of groups}
\label{sec:appl-doubl-groups}
For this section we work in the category of vector spaces over a field $\kk$.   Throughout this section $G,H,K,C$ will all be finite groups.  For any group $G$ let $\widehat{G}$ be the group of group-like elements of $\du{G}$, the dual of the group algebra $\kk G$.   Note that $\widehat{G}$ is precisely the $\kk$-linear characters of $G$.   We also define $\Gamma_G=\widehat{G}\times G$. We denote the conjugation action of $G$ on $\du{G}$ and $\kk G$ both by $\rightharpoonup$; e.g. $g\rightharpoonup x = gxg^{-1}$ for all $g,x\in G$. We will be concerned with $\D(G)$, the Drinfeld double of a finite group $G$.  As a coalgebra $\D(G)=\rcofix{\du{G}}{}\ot \kk G$, and the algebra structure is given by having $G$ act on $\rcofix{\du{G}}{}$ by the conjugation action.  We note that $\Gamma_G$ is the group of group-like elements of $\D(G)$.  See \cite{DijPasRoc:QHAGCOM,Mon:HAAR} for further details on the construction and properties of this Hopf algebra.  

In \cite{K14} the first author gave a complete description of $\Hopfaut(\D(G))$ whenever $G$ has no non-trivial abelian direct factors.  Such a group is said to be purely non-abelian.  When $G$ is abelian we have $\D(G)=\du{G}\ot\kk G$, an abelian Hopf algebra, and the determination of $\Hopfaut(\D(G))$ is then straightforward.  Indeed, under mild assumptions on $\kk$ we have $\D(G) \cong \kk(G\times G)$. Subsequently in this case $\Hopfaut(\D(G))$ can be computed by classical methods in group theory \cite{Sho:AAG}. We note that the structure of such an automorphism group has been of more recent interest \cite{BidCur:AFAG,HilRhe:AFAG}.  It is the goal of this section to complete the description of $\Hopfaut(\D(G))$ when $G$ has an abelian direct factor.

So suppose that $G=C\times H$ with $C$ abelian.  Then $\D(G)\cong\D(C)\otimes\D(H)$.  Since $\D(C)$ is an abelian Hopf algebra the results of the previous section can be applied whenever $\D(H)$ has no abelian direct tensor factors.  We will proceed to show this happens precisely when $H$ is purely non-abelian.

We have the following description of $\Hopfmor(\D(G),\D(K))$.

\begin{Thm}[\cite{K14}]
The elements of $\Hopfmor(\D(G),\D(K))$ are in bijective correspondence with matrices 
$\begin{pmatrix}
    u&r\\p&v
  \end{pmatrix}$
where $u\colon \du{G}\to\du{K}$ is a morphism of unitary coalgebras; $p\colon \du{G}\to\kk K$ is a morphism of Hopf algebras; and $r\colon G\to\widehat{K}$ and $v\colon G\to K$ are group homomorphisms.  These are all subject to the following compatibility relations, for all $a,b\in\du{G}$ and $g\in G$:
\begin{enumerate}
	\item $u(g\rightharpoonup a)=v(g)\rightharpoonup u(a)$, from which it follows that $u^*v$ is normal;
	\item $u\cocomm p$;
	\item $u(ab)=u(a_{(1)})(p(a_{(2)})\rightharpoonup u(b))$;
	\item $p(g\rightharpoonup a)=v(g)\rightharpoonup p(a)$.
\end{enumerate}
The morphism is defined by \[a\# g\mapsto u(a_{(1)})r(g)\# p(a_{(2)})v(g).\]
Composition of such morphisms is given by matrix multiplication, as in \cref{sec:endo-tens-prod}.
\end{Thm}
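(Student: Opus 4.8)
The plan is to transfer the component/matrix bookkeeping of \cref{sec:endo-tens-prod} to the smash-product setting of the double. I would use the following structural facts about $\D(-)$: (a) $\du{G}$ sits inside $\D(G)$ as a Hopf subalgebra via $a\mapsto a\#1$ and $\kk G$ via $g\mapsto\epsilon\#g$; (b) $\D(K)\cong\du K\ot\kk K$ \emph{as a coalgebra}; (c) $\pi\colon\D(K)\to\kk K$, $b\#k\mapsto\epsilon(b)k$, is a Hopf algebra projection (which uses only that the conjugation action preserves the counit); (d) the group-likes of $\D(K)$ are exactly $\Gamma_K=\widehat K\times K$; and crucially (e) every linear character of $K$ is central in $\du K$ and fixed by the conjugation action.

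Given $f\in\Hopfmor(\D(G),\D(K))$, I would first restrict to $\kk G$: being spanned by group-likes, $f$ carries each $g$ into $\Gamma_K$, so $f(g)=r(g)\#v(g)$ for group homomorphisms $r\colon G\to\widehat K$, $v\colon G\to K$. Restricting to $\du{G}$, the coalgebra map $f|_{\du{G}}\colon\du{G}\to\du K\ot\kk K$ decomposes, by the coalgebra bijection of \cref{sec:endo-tens-prod}, as $(u\ot p)\Delta$ with $u:=(\id_{\du K}\ot\epsilon)f|_{\du{G}}$ and $p:=\pi f|_{\du{G}}$ two cocommuting coalgebra maps, which is relation (ii); here $p$ is a Hopf algebra map since it is a composite of algebra maps, and $u$ is unitary since $f(1\#1)=1$. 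From $a\#g=(a\#1)(\epsilon\#g)$, multiplicativity of $f$ and fact (e) give $f(a\#g)=u(a_{(1)})r(g)\#p(a_{(2)})v(g)$, so $f$ is recovered from $(u,p,r,v)$. The relations (i), (iii), (iv) are then read off by testing multiplicativity of $f$: apply $f$ to $(a\#1)(b\#1)=ab\#1$ and project to the $\du K$-component for (iii); apply $f$ to $(\epsilon\#g)(a\#1)(\epsilon\#g^{-1})=(g\rightharpoonup a)\#1$ and project to the $\du K$- and $\kk K$-components for (i) and (iv) respectively, (i) being exactly the equivariance that, as remarked in the statement, makes $u^{*}v$ normal.

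For the converse, starting from a quadruple $(u,p,r,v)$ satisfying (i)--(iv), I would define $f$ by the displayed formula and verify it is a Hopf morphism. It is a coalgebra map because it factors as $\du{G}\ot\kk G\to\D(K)\ot\D(K)\xrightarrow{\nabla}\D(K)$ through $(u\ot p)\Delta$ on the first tensor factor (a coalgebra map precisely by (ii)) and $g\mapsto r(g)\ot v(g)$ on the second (group-like valued, hence a coalgebra map), composed with the multiplication of the bialgebra $\D(K)$; unitality and counitality are immediate from the properties of $u,p,r,v$. The substantive point is multiplicativity, $f\big((a\#g)(b\#h)\big)=f(a\#g)f(b\#h)$: after expanding $(a\#g)(b\#h)=a(g\rightharpoonup b)\#gh$ and multiplying out the smash product on the other side, one matches the two expressions using (iii) to pass $u$ through a product, (iv) to slide $p$ across the action, (i) to slide $u$ across the action, and fact (e) once more. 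Composition corresponding to matrix multiplication is then a direct check, parallel to the composition rule in \cref{sec:endo-tens-prod}.

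The main obstacle is pure bookkeeping: since $\du{G}$ fails to be cocommutative unless $G$ is abelian, the Sweedler components of $a$ must be tracked carefully throughout the multiplicativity verification and when extracting relation (iii). Beyond that the argument is a faithful transcription of the tensor-product-of-Hopf-algebras machinery into the smash-product setting, the only genuinely new ingredient being fact (e), which is what lets the off-diagonal datum $r$ appear unconstrained by any of the four relations.
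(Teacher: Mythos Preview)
The paper does not prove this theorem; it is quoted from \cite{K14} and stated without proof, so there is nothing to compare against. Your proposal is a correct and self-contained argument for the cited result. The strategy---restrict $f$ to the Hopf subalgebras $\du{G}$ and $\kk G$, read off $r,v$ from group-likes, read off $u,p$ via the coalgebra splitting $\D(K)\cong\du K\ot\kk K$ together with the Hopf projection $\pi\colon\D(K)\to\kk K$, then recover the four relations from multiplicativity---is the natural one, and your use of fact (e) to make $r$ slide past $p(a_{(2)})$ is exactly what is needed to get the displayed formula. The only point I would tighten is the converse coalgebra-map verification: your factorization through $\nabla_{\D(K)}$ is correct, but one should also note that the map $\du G\ot\kk G\to\D(K)\ot\D(K)$ you describe is a coalgebra map from the \emph{tensor product} coalgebra structure on $\du G\ot\kk G$, which is indeed the coalgebra structure of $\D(G)$; you say this implicitly but it is the crux of why the argument goes through despite the domain not being a tensor product of Hopf algebras.
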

The morphism $p$ is uniquely determined by an isomorphism $\du{A}\cong\kk B$, where $A$ is an abelian normal subgroup of $G$ and $B$ is an abelian subgroup of $K$.  In particular we must have $\du{A}\cong \kk \widehat{A}$.  For the remainder of this section any use of $A,B$ refers to these subgroups.  We note that the last relation says $p\comm v$ if and only if $A\leq Z(G)$, or equivalently $p$ is cocentral: $p\cocomm \id$.

By convention we implicitly identify any element of $\Hopfmor(\D(G),\D(K))$ or $\Hopfmor(\tprod{G},\tprod{K})$ with its quadruple of components $(u,r,p,v)$, or equivalently as a matrix $\begin{pmatrix} u&r\\p&v \end{pmatrix}$.

The following is then immediate. 

\begin{Lem}
	A morphism $\psi\in\Hopfmor(\D(G),\D(K))$ is canonically an element of $\Hopfmor(\tprod{G},\tprod{K})$ precisely when $p\comm v$ and $u$ is a morphism of Hopf algebras.
	
	On the other hand, $\phi\in\Hopfmor(\tprod{G},\tprod{K})$ is canonically an element of $\Hopfmor(D(G),\D(K))$ precisely when $u^*\circ v$ is normal and $A\leq Z(G)$.
\end{Lem}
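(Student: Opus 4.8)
The plan is to obtain both halves of the statement by directly comparing the two parametrizations of the relevant Hom-sets by quadruples $(u,r,p,v)$ of component maps. The quoted theorem of \cite{K14} parametrizes $\Hopfmor(\D(G),\D(K))$ by the quadruples in which $u\colon\du{G}\to\du{K}$ is a unitary coalgebra map, $p\colon\du{G}\to\kk K$ a Hopf map, and $r\colon G\to\widehat{K}$, $v\colon G\to K$ group homomorphisms, subject to relations (i)--(iv). The $\Hopfmor$-version of the matrix description of \cref{sec:endo-tens-prod}, applied to the four tensor factors $\du{G},\kk G,\du{K},\kk K$, parametrizes $\Hopfmor(\tprod{G},\tprod{K})$ by the quadruples in which $u\colon\du{G}\to\du{K}$ and $p\colon\du{G}\to\kk K$ are Hopf maps and $r,v$ are group homomorphisms (equivalently Hopf maps $\kk G\to\du{K}$, $\kk G\to\kk K$), subject to $u\comm r$, $u\cocomm p$, $r\cocomm v$ and $p\comm v$. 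The key point is that in both cases the underlying $\kk$-linear map attached to a quadruple is the same, namely $a\#g\mapsto u(a_{(1)})r(g)\#p(a_{(2)})v(g)$, and the underlying coalgebras of $\D(G)$ and $\tprod{G}$ literally coincide; so ``$\psi$ is canonically an element of $\Hopfmor(\tprod{G},\tprod{K})$'' means precisely that the quadruple of $\psi$ obeys the second list of conditions, and conversely for the other half.

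First I would discard the automatic conditions on the tensor-product side: $\du{K}$ is a commutative algebra, so $u\comm r$ holds for free, and $\kk G$ is a cocommutative coalgebra, so $r\cocomm v$ holds for free; thus the second list reduces to ``$u,p$ Hopf, $r,v$ group homs, $u\cocomm p$, $p\comm v$''. In the first list ``$p$ Hopf'' and ``$r,v$ group homs'' are already present, and relation (ii) is exactly $u\cocomm p$. Hence, starting from $\psi\in\Hopfmor(\D(G),\D(K))$, the only conditions of the second list that can fail are ``$u$ is a Hopf map'' --- which, $u$ being already a unitary coalgebra map, means simply that $u$ is an algebra map (a bialgebra map between Hopf algebras automatically respects antipodes) --- and ``$p\comm v$'', which is not among (i)--(iv); and when these two hold, all second-list conditions hold, the remainder being automatic or already guaranteed by $\psi$ being a $\D$-morphism. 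This proves the first assertion.

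For the second assertion I would run the comparison in reverse: for $\phi\in\Hopfmor(\tprod{G},\tprod{K})$ with quadruple $(u,r,p,v)$ we already have that $u$ is a Hopf (hence unitary coalgebra) map, $p$ a Hopf map, $r,v$ group homs, and relation (ii), so $\phi$ lies in $\Hopfmor(\D(G),\D(K))$ iff the three relations (i), (iii), (iv) hold. It then remains to match these, under the tensor-product hypotheses, with the two conditions ``$u^{*}v$ normal'' and ``$A\leq Z(G)$'': the first should account for the equivariance relation (i) on $u$ --- the forward implication being the observation recorded in statement (i) of the theorem of \cite{K14} --- with relation (iii) becoming redundant once $u$ is an honest algebra map; and ``$A\leq Z(G)$'' should account for (iv), since $p\comm v$ already says $v(G)$ centralizes the abelian subgroup $B=\Img(p)$, whence (iv) collapses to $p(g\rightharpoonup a)=p(a)$, which for the Hopf map $p$ (factoring through $\du{G}\twoheadrightarrow\du{A}\xrightarrow{\,\sim\,}\kk B\hookrightarrow\kk K$) holds precisely when $A$ is central --- exactly the content of the remark following the theorem of \cite{K14}. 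The step I expect to be the main obstacle is precisely this matching, and within it the converse directions: above all, that normality of $u^{*}v$ --- which a priori constrains only the group homomorphism $G\to G$ underlying $u^{*}v$ --- genuinely forces the full equivariance relation (i) once $u$ is a Hopf map; and throughout one must be careful to read ``normal'', ``$\comm$'' and ``$\cocomm$'' with their concrete meanings at the level of $G$, $\du{K}$ and $\kk K$, since it is exactly the commutativity of $\du{K}$ together with the non-(co)commutativity of $\du{G}$ and $\kk G$ that makes the two condition lists differ in the asserted way.
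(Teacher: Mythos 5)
Your treatment of the first assertion is complete and is essentially the intended argument: the conditions from \cref{sec:endo-tens-prod} for a quadruple to define an element of $\Hopfmor(\tprod{G},\tprod{K})$ reduce, after discarding $u\comm r$ and $r\cocomm v$ (automatic by commutativity of $\du{K}$ and cocommutativity of $\kk G$), to ``$u,r,p,v$ Hopf maps, $u\cocomm p$, $p\comm v$''; of these only the multiplicativity of $u$ and $p\comm v$ are not already guaranteed for $\psi\in\Hopfmor(\D(G),\D(K))$, and since both parametrizations attach the same linear map to the same quadruple and the underlying coalgebras of $\D(G)$ and $\tprod{G}$ coincide, the first equivalence follows.

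For the second assertion, however, your proposal stops exactly where the work is. You correctly reduce the claim to showing that, for $\phi\in\Hopfmor(\tprod{G},\tprod{K})$, relations (i), (iii), (iv) of the cited theorem hold if and only if $u^*v$ is normal and $A\leq Z(G)$, and your handling of (iv) is fine. But the two remaining equivalences are asserted rather than proved, and you flag this yourself; they are not formalities. Dualizing, relation (i) says $u^*(v(g)\,y\,v(g)^{-1})=g\,u^*(y)\,g^{-1}$ for \emph{all} $y\in K$, whereas normality of the endomorphism $u^*v$ of $G$ is this identity only for $y\in\Img(v)$. Taken literally the backward implication can fail: for $G=K$ non-abelian, $u=\id_{\du{G}}$ and $r,p,v$ trivial, one has $u^*v=\eta\epsilon$ normal and $A=1\leq Z(G)$, yet the resulting map $a\ot g\mapsto a\ot 1$ is an endomorphism of $\tprod{G}$ that is not multiplicative for the double, since $(a\#g)(b\#h)=a(g\rightharpoonup b)\#gh$. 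So you must either read ``$u^*v$ normal'' as the full equivariance relation (i) --- which is how \cite{K14} and the remainder of the section actually use the condition --- or supply an argument that the remaining hypotheses force (i); the proposal does neither. Likewise relation (iii), once $u$ is multiplicative, becomes $u(a)u(b)=u(a_{(1)})\bigl(p(a_{(2)})\rightharpoonup u(b)\bigr)$, i.e.\ triviality (in the convolution sense) of the conjugation action of $B=\Img(p)$ on $\Img(u)$; this does not follow from multiplicativity of $u$ alone, nor from (i) (since $B$ need not lie in $\Img(v)$), so ``(iii) becomes redundant'' requires an actual verification using $u\cocomm p$ and $A\leq Z(G)$. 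As written, the second half of the lemma is set up but not proved.
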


In the first case we call such a morphism untwistable, and in the second we call it twistable.  Clearly any untwistable morphism is also twistable, and vice versa.  The distinction is simply in the algebra structures we start with.

Now since $\tprod{G}$ and $\tprod{K}$ are canonically self-dual any morphism $\psi\in\Hopfmor(\tprod{G},\tprod{K})$ yields a dual morphism $\psi^*\in\Hopfmor(\tprod{K},\tprod{G})$ with components $(v^*,r^*,p^*,u^*)$.  The following is then clear.

\begin{Cor}
	Both $\psi\in\Hopfmor(\tprod{G},\tprod{K})$ and $\psi^*$ are twistable if and only if the following all hold
	\begin{enumerate}
		\item $u^* v$ is normal;
		\item $v u^*$ is normal;
		\item $A\leq Z(G)$;
		\item $B\leq Z(K)$.
	\end{enumerate}
In this case we may canonically view $\psi\in\Hopfmor(\D(G),\D(K))$ and $\psi^*\in\Hopfmor(\D(K),\D(G))$.
\end{Cor}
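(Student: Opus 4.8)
The plan is to derive each direction of the equivalence by invoking the preceding Lemma (the twistability criterion) twice: once applied to $\psi$ and once to $\psi^*$. Applied to $\psi\in\Hopfmor(\tprod G,\tprod K)$, whose components are $(u,r,p,v)$, the Lemma says that $\psi$ is twistable if and only if $u^*\circ v$ is normal and $A\leq Z(G)$; this is precisely conditions (i) and (iii) of the statement. In particular, condition (iii) forces $A$ to be normal in $G$, as is needed for $\psi$ to be regarded in $\Hopfmor(\D(G),\D(K))$.

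It remains to see that ``$\psi^*$ twistable'' is equivalent to conditions (ii) and (iv). Here I would apply the Lemma to $\psi^*\in\Hopfmor(\tprod K,\tprod G)$, whose components are $(v^*,r^*,p^*,u^*)$ as recorded above; thus the $u$-component of $\psi^*$ is $v^*\colon\du K\to\du G$ and its $v$-component is $u^*\colon\kk K\to\kk G$. The Lemma's normality requirement for $\psi^*$ therefore reads ``$(v^*)^*\circ u^*$ is normal''. Since $\tprod K$ is canonically self-dual, the standard reflexivity identification gives $(v^*)^*=v$, so this condition is exactly ``$v u^*$ is normal'', i.e.\ condition (ii).

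For the second half of the Lemma's criterion applied to $\psi^*$ I must identify the abelian subgroup of $K$ attached to the $p$-component $p^*$. Factoring $p$ through its image yields $p\colon\du G\twoheadrightarrow\du A\xrightarrow{\ \sim\ }\kk B\hookrightarrow\kk K$; dualizing this factorization gives $p^*\colon\du K\twoheadrightarrow\du B\xrightarrow{\ \sim\ }\kk A\hookrightarrow\kk G$, where the middle isomorphism is again of the required form because $A$ and $B$ are abelian (so that $B=\widehat A$, $\widehat{\widehat A}=A$, and $\du B\cong\kk A$). Hence the subgroup of $K$ governing $p^*$ is $B$, and the centrality requirement of the Lemma for $\psi^*$ becomes $B\leq Z(K)$, i.e.\ condition (iv); note that this makes $B$ normal in $K$, which is what legitimizes $p^*$ as a $p$-component of a morphism $\D(K)\to\D(G)$. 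Combining the two applications of the Lemma, $\psi$ and $\psi^*$ are both twistable exactly when (i)--(iv) all hold, and in that case the definition of ``twistable'' directly gives $\psi\in\Hopfmor(\D(G),\D(K))$ and $\psi^*\in\Hopfmor(\D(K),\D(G))$.

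The step that needs genuine care is the dualization bookkeeping for the $p$-component: one must verify that passing from $p$ to $p^*$ interchanges the roles of the subgroups $A\leq G$ and $B\leq K$, so that the subgroup of $K$ relevant to the twistability of $\psi^*$ is exactly $B$, and that abelianness of $A$ and $B$ is precisely what preserves the shape of the factorization under duality. Everything else is routine: two invocations of the Lemma together with the reflexivity identifications $(u^*)^*=u$ and $(v^*)^*=v$ for the canonically self-dual Hopf algebras $\tprod G$ and $\tprod K$.
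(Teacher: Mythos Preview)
Your argument is correct and matches the paper's approach: the paper states this result as a corollary with no proof, treating it as immediate from the preceding Lemma applied to $\psi$ and to $\psi^*$, which is exactly what you do. Your careful bookkeeping on $p^*$ (showing that dualizing swaps the roles of $A$ and $B$, so the centrality condition for $\psi^*$ becomes $B\le Z(K)$) makes explicit the one step the paper leaves to the reader.
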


In \cite{K14} a morphism $\psi=(u,r,p,v)\in\Hopfmor(\D(G),\D(K))$ was said to be flippable if also $(v^*,r^*,p^*,u^*)\in\Hopfmor(\D(K),\D(G))$.  This is equivalent to saying that $\psi$ is untwistable and the corresponding dual $\psi^*$ is twistable.  In particular the Corollary gives a complete description of the flippable elements of $\Hopfmor(\D(G),\D(K))$, and shows that 'flipping' an element of $\Hopfmor(\D(G),\D(K))$ can naturally be described as dualizing the morphism.

\begin{Cor}
	For any group $G$, $\Hopfaut(\D(G))$ is canonically a subgroup of $\Hopfaut(\tprod{G})$ which is closed under dualization.
\end{Cor}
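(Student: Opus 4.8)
The plan is to reduce the Corollary to one structural fact about automorphisms of $\D(G)$, after which everything is formal. Recall from the Lemma that a morphism $\psi=(u,r,p,v)$ of $\D(G)$ is canonically a morphism of $\tprod G$ (is \emph{untwistable}) exactly when $p\comm v$ — equivalently $A\le Z(G)$, equivalently $p$ cocentral — together with $u$ being a Hopf morphism, and that by the preceding Corollary $\psi$ is flippable (both $\psi$ and its dual $\psi^*=(v^*,r^*,p^*,u^*)$ twistable) exactly when $u^*v$ and $vu^*$ are normal and $A\le Z(G)$ and $B\le Z(G)$. The fact to prove is that every $\psi=(u,r,p,v)\in\Hopfaut(\D(G))$ has $u$ and $v$ isomorphisms and $A\le Z(G)$, $B\le Z(G)$; equivalently, every automorphism of $\D(G)$ is both untwistable and flippable.

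Granting this, the Corollary follows formally. Since $A\le Z(G)$ gives $p\comm v$, and $B=\Img(p)\le Z(G)$ makes relation (iii) of the Theorem $u(ab)=u(a_{(1)})(p(a_{(2)})\rightharpoonup u(b))$ collapse to $u(ab)=u(a)u(b)$ (conjugation by central elements being trivial), $u$ is a Hopf morphism; hence $\psi$ is untwistable, and so is $\psi\inv\in\Hopfaut(\D(G))$. An untwistable morphism is the same $\kk$-linear endomorphism of the common underlying object whether regarded in $\D(G)$ or in $\tprod G$, so $\psi\psi\inv=\psi\inv\psi=\id$ holds in $\Hopfmor(\tprod G,\tprod G)$ as well, i.e.\ $\psi\in\Hopfaut(\tprod G)$. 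The resulting map $\Hopfaut(\D(G))\to\Hopfaut(\tprod G)$ is an injective group homomorphism that involves no choices, so $\Hopfaut(\D(G))$ is canonically a subgroup. For closure under dualization, note that the dualization of $\Hopfaut(\tprod G)$ — available because $\tprod G$ is canonically self-dual — carries the matrix $(u,r,p,v)$ to $(v^*,r^*,p^*,u^*)$, which by flippability is again a morphism of $\D(G)$, invertible with inverse $(\psi\inv)^*$; hence $\psi^*\in\Hopfaut(\D(G))$.

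It remains to establish the displayed fact, which is the crux and the only non-formal step. Write $\psi\inv=(u',r',p',v')$; expanding $\psi\psi\inv=\psi\inv\psi=\id_{\D(G)}$ with the matrix-multiplication rule for composition produces convolution identities such as $u'u*r'p=\id_{\kk^G}$, $p'r*v'v=\id_{\kk G}$, $pu'*vp'=\eta\epsilon$, and their mirrors. Because $p$ factors as $\kk^G\twoheadrightarrow\kk^A\xrightarrow{\ \sim\ }\kk B\hookrightarrow\kk G$ with $A\trianglelefteq G$ abelian and $B\le G$ abelian, and $r\colon G\to\widehat G$ is a group homomorphism, the off-diagonal convolution factors have small support; using this, together with the fact that $\psi$ permutes the group-likes $\Gamma_G=\widehat G\times G$ bijectively, one shows that $u$ and $v$ are isomorphisms. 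Relation (i) of the Theorem then states that $u^*v$ is a normal, hence central, automorphism of $G$ (i.e.\ $g\inv(u^*v)(g)\in Z(G)$ for all $g$); since the central automorphisms form a normal subgroup of $\operatorname{Aut}(G)$ and $vu^*=(u^*)\inv(u^*v)(u^*)$, $vu^*$ is central and normal too. Finally one shows $A\le Z(G)$ — and then $B\le Z(G)$ by applying this to $\psi\inv$, whose $p$-component interchanges the roles of $A$ and $B$ — by combining relations (ii) and (iv) for $\psi$ and $\psi\inv$ with the convolution identities to force $p$ to be cocentral. This last point, that the abelian normal subgroup attached to an automorphism of $\D(G)$ is central, is the main obstacle; it is essentially the automorphism analysis carried out in \cite{K14}, and everything else is bookkeeping with the matrix formalism and the normality/cocentrality dictionary already in place.
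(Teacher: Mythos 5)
Your formal reduction is correct and is, in substance, what the paper does: once one knows that every $\psi\in\Hopfaut(\D(G))$ and its dual $\psi^*$ are twistable in the sense of the preceding corollary, the subgroup claim follows because an untwistable morphism is the same underlying $\kk$-linear map whether regarded in $\D(G)$ or in $\tprod{G}$ (so composites and identities are preserved and the embedding is injective and choice-free), and closure under dualization follows because $\psi^*$ is then again a morphism of $\D(G)$ with inverse $(\psi\inv)^*$. The paper's proof is precisely a citation of the preceding corollary, \cref{sec:autom-tens-prod}, and the properties of $\Hopfaut(\D(G))$ from \cite{K14}; you correctly identify that the only non-formal content is that cited input.

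However, the ``fact'' you set out to prove is false as stated, and one step built on it would fail. You claim that every automorphism of $\D(G)$ has $u$ and $v$ isomorphisms and call this \emph{equivalent} to being untwistable and flippable. Neither holds: for $G$ abelian, say $G=\Z_p$ over a field with enough roots of unity, $\D(G)\cong\kk(G\times G)$, and the automorphism interchanging $\widehat{G}$ and $G$ has trivial diagonal components $u$ and $v$, yet it is untwistable and flippable since everything is central and all normality conditions are vacuous. Untwistability and flippability require only $A\le Z(G)$, $B\le Z(G)$, $u$ a Hopf morphism, and normality of $u^*v$ and $vu^*$; they impose no invertibility on $u$ or $v$. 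Consequently your derivation of normality of $vu^*$ by writing $vu^*=(u^*)\inv(u^*v)u^*$ and conjugating inside $\operatorname{Aut}(G)$ is not available in general. The correct input --- that for every automorphism of $\D(G)$ one has $A,B\le Z(G)$ and both $u^*v$ and $vu^*$ normal --- is exactly what \cite{K14} establishes and what the paper cites; your closing acknowledgement that the crux is the automorphism analysis of \cite{K14} is right, but the intermediate claim about $u$ and $v$ being isomorphisms should be removed rather than proved, and with it the conjugation argument.
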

\begin{proof}
	Follows from the preceeding corollary, \cref{sec:autom-tens-prod}, and the properties of $\Hopfaut(\D(G))$ established in \cite{K14}.
\end{proof}
	
We now show that the act of untwisting a morphism is fairly well-behaved whenever the image is commutative.
	
	\begin{Prop}\label{prop:algebra-change-preserves}
		Let $\psi\in\Hopfmor(\D(G),\D(K))$ be untwistable.  For convenience, let $\psi'=\psi\in\Hopfmor(\tprod{G},\tprod{H})$.  Then the following all hold.
		\begin{enumerate}
		  \item If $G=H$, then $\psi$ is conormal if and only if $\psi'$ is conormal.\label{prop:conormal-preserved}
			\item If $\psi$ has a commutative image then $\psi'$ has commutative image.
			\item If $\psi$ has commutative image and $G=H$, then $\psi'$ is normal if and only if $v$ is normal and $B\leq Z(G)$.
			\item If $\psi$ has commutative image and $G=H$, then $\psi$ is normal if and only if $\psi'$ is normal and $G$ acts trivially on $\operatorname{Img}(u)$.
			\item If $\psi$ has commutative image and $G=H$, then $\psi$ is conormal.
		\end{enumerate}
	\end{Prop}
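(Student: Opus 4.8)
The unifying point is that $\D(G)$ and $\tprod G$ have the same underlying coalgebra and unit, so that the two algebra structures --- and hence the two antipodes --- differ only through the conjugation twist of the smash product. Recall that ``$\psi$ untwistable'' means precisely that $u$ is a Hopf algebra morphism and $p\comm v$; the latter, together with the relation $p(g\rightharpoonup a)=v(g)\rightharpoonup p(a)$ of the preceding theorem, forces $p(g\rightharpoonup a)=p(a)$ for all $g,a$ (equivalently $A\le Z(G)$, i.e.\ $p\cocomm\id$). Throughout I keep the common formula $\psi(a\#g)=u(a\sw1)r(g)\#p(a\sw2)v(g)$ and write $\pi\colon\D(H)\to\kk H$, $c\#g\mapsto\epsilon(c)g$, for the (readily checked) Hopf algebra projection.

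For \cref{prop:conormal-preserved}, the plan is to observe that the left coadjoint coaction is \emph{literally the same} linear map in both structures: one computes $\operatorname{coad}(a\#g)=\bigl(a\sw1S(a\sw3)\#1\bigr)\ot\bigl(a\sw2\#g\bigr)$ whether multiplication and antipode are taken in $\D(G)$ or in $\tprod G$, the reason being that the group-like $g$ always contributes $g\sw1S(g\sw3)=gg\inv=1$ and that the conjugations produced by the smash product cancel. Since conormality is colinearity for $\operatorname{coad}$, $\psi$ is conormal iff $\psi'$ is. As $\psi$ is moreover a Hopf morphism for $\D(G)$, the same computation gives $\operatorname{coad}\,\psi(a\#g)=\psi(a\sw1S(a\sw3)\#1)\ot\psi(a\sw2\#g)$ and $(\id\ot\psi)\operatorname{coad}(a\#g)=(a\sw1S(a\sw3)\#1)\ot\psi(a\sw2\#g)$, a form of conormality I will reuse.

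Two further ingredients drive the rest. (a) $\kk^G$ is commutative, so its adjoint action is trivial, every endomorphism of it is normal, and every morphism into it $\comm$-commutes with $\id_{\kk^G}$; dually $\kk G$ is cocommutative, so every endomorphism of it is conormal and every morphism out of it $\cocomm$-commutes with $\id_{\kk G}$. (b) By the matrix description and (co)normality criterion of \cref{sec:endo-tens-prod}, $\psi'=\left(\begin{smallmatrix}u&r\\p&v\end{smallmatrix}\right)\in\Hopfend(\tprod G)$ is normal iff $u,v$ are normal and $r\comm\id_{\kk^G}$, $p\comm\id_{\kk G}$, and conormal iff $u,v$ are conormal and $r\cocomm\id_{\kk G}$, $p\cocomm\id_{\kk^G}$. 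Combining (a) and (b): in the normal case the conditions on $u$ and $r$ are automatic and $p\comm\id_{\kk G}$ says exactly that $\Img p=\kk B$ is central, i.e.\ $B\le Z(G)$; this already proves (iii). In the conormal case the conditions on $v$ and $r$ are automatic and $p\cocomm\id_{\kk^G}$ holds because $\psi$ is untwistable, so $\psi'$ is conormal iff $u$ is conormal.

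Parts (ii) and (v) come from pushing commutativity of $\Img\psi$ through $\pi$. For (ii): commutativity in $\D(H)$ gives $\psi\bigl(a(g\rightharpoonup b)\#gh\bigr)=\psi\bigl(b(h\rightharpoonup a)\#hg\bigr)$; applying $\pi$ and $p(g\rightharpoonup\,\cdot\,)=p(\,\cdot\,)$ yields $p(ab)\,v(g)v(h)=p(ab)\,v(h)v(g)$, whence ($a=b=\epsilon$) $\Img v$ is abelian; since $p\comm v$ and $\Img p=\kk B$, the subalgebra $\langle\Img p,\Img v\rangle$ of $\kk H$ is $\kk N$ with $N\le H$ abelian, so $\Img\psi\subseteq\kk^H\ot\kk N$ --- a commutative subalgebra of $\tprod H$ --- and $\Img\psi'$ is commutative. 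For (v): applying $\pi$ to $\psi(a\#1)\psi(1\#g)=\psi(1\#g)\psi(a\#1)$, that is to $\psi(a\#g)=\psi((g\rightharpoonup a)\#g)$, and comparing $\kk^H$-components (apply $\id\ot\epsilon$, use $p(g\rightharpoonup\,\cdot\,)=p(\,\cdot\,)$, cancel the invertible $r(g)$) gives $u(g\rightharpoonup a)=u(a)$ for all $g,a$. Writing $\phi$ for the group endomorphism of $G$ to which the Hopf endomorphism $u$ of $\kk^G$ is dual, $u(g\rightharpoonup a)=u(a)$ says precisely $\Img\phi\le Z(G)$; then $\phi$ extended linearly is a normal endomorphism of $\kk G$ --- a homomorphism with central image --- so by duality $u$ is a conormal endomorphism of $\kk^G$. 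By the reduction above $\psi'$ is conormal, hence by (i) $\psi$ is conormal. (Incidentally $G$ then acts trivially on $\Img u$, the extra clause of (iv).)

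For (iv) the two adjoint actions really do differ: using that $\kk^G$ is commutative,
\[
\operatorname{ad}_{\tprod G}\bigl((a\ot g)\ot(b\ot h)\bigr)=\epsilon(a)\,b\ot ghg\inv,
\]
whereas
\[
\operatorname{ad}_{\D(G)}\bigl((a\#g)\ot(b\#h)\bigr)=a\sw1(g\rightharpoonup b)\bigl(ghg\inv\rightharpoonup S(a\sw2)\bigr)\#\,ghg\inv,
\]
so the discrepancy is the extra conjugation by $ghg\inv$ on the antipode leg. The plan is to test $\operatorname{ad}$-linearity of $\psi$ on the two types of algebra generators $1\#g$ and $a\#1$ of $\D(G)$: on $1\#g$, after using $p(g\rightharpoonup\,\cdot\,)=p(\,\cdot\,)$ and that $\Img r$ and (by commutativity) $\Img u$ consist of class functions, the condition is equivalent to ``$v$ normal and $B\le Z(G)$'', i.e.\ to normality of $\psi'$; on $a\#1$ the extra conjugation lands on $\Img u$, and the verification goes through precisely because $G$ acts trivially on $\Img u$. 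Hence $\psi$ is normal iff $\psi'$ is normal and $G$ acts trivially on $\Img u$. I expect the $a\#1$ case to be the main obstacle: unlike the coadjoint coaction, the adjoint action genuinely changes between the two algebra structures, so one must carry the smash-product twist through the generator-by-generator check and isolate ``$G$ acts trivially on $\Img u$'' as exactly the residual condition it produces.
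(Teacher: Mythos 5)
Your proofs of (i), (ii), (iii) and (v) are correct. For (i) and (ii) you are close to the paper: (i) is the paper's argument verbatim (both coadjoint coactions collapse to $a\sw3S(a\sw1)\#1\ot a\sw2\#g$), while for (ii) the paper instead observes that $\psi$ applied to a product is the same whether the product is taken in $\D(G)$ or in $\tprod{G}$, so the two induced multiplications on the image agree; your alternative of trapping $\Img\psi'$ inside the commutative subalgebra $\du{H}\ot\kk N$ with $N=B\cdot v(G)$ abelian works just as well. Your (iii) and (v) genuinely diverge from the paper, which proves both by direct computation with the (co)adjoint structure maps: you instead feed $\psi'$ into the matrix criterion for (co)normality of endomorphisms of a tensor product from \cref{sec:endo-tens-prod}, discard the entries that are automatic because $\du{G}$ is commutative and $\kk G$ cocommutative, and for (v) dualize $u$ to a group endomorphism $\phi$ with $\Img\phi\le Z(G)$. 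This is cleaner, makes visible that (iii) does not actually need the commutative-image hypothesis, and is a legitimate use of the earlier machinery (note that you are implicitly reading that criterion as ``$a,d$ normal, $b\comm\id_H$, $c\comm\id_K$'', which is what its proof establishes).

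The sketch of (iv) contains a concrete error of attribution, although the strategy --- compute $\operatorname{ad}_{\D(G)}$ explicitly and test linearity against the algebra generators $1\#g$ and $a\#1$ --- is sound and is essentially what the paper does for a general element $a\#g$. Testing against $a\#1$ produces $a\sw1u(b\sw1)r(h)\bigl(p(b\sw2)v(h)\rightharpoonup S(a\sw2)\bigr)\#p(b\sw3)v(h)$ on one side: the extra conjugation lands on $S(a\sw2)$ and is by elements of $\Img p\cdot\Img v$, so this case is settled by $B\le Z(G)$ and $\Img v\le Z(G)$ (which follow from normality of $\psi'$ together with the commutativity-derived fact that $\Img v$ is abelian) and says nothing about $\Img u$. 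It is the $1\#g$ case that produces the term $g\rightharpoonup u(b\sw1)$ and therefore isolates ``$G$ acts trivially on $\Img u$'' as the residual clause of (iv) --- precisely the term you cancel there by asserting that $\Img u$ consists of class functions ``by commutativity''. That assertion is in fact true under the hypotheses (commutativity gives $u(g\rightharpoonup a)=u(a)$, i.e.\ $\Img\phi\le Z(G)$, hence $\Img\phi$ abelian, hence every $u(a)$ is a class function; so the extra clause of (iv) is automatic, as you yourself note at the end of (v)), and your computation would therefore still terminate in a true statement. But as organized, the $a\#1$ step proves nothing about $\Img u$, and the $1\#g$ step silently absorbs the very condition the stated equivalence is supposed to exhibit. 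Redo the two generator computations with the conditions correctly attributed before relying on this part.
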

	\begin{proof}
		We first prove \cref{prop:conormal-preserved}, as it is the only one that does not suppose that $\psi$ has commutative image.  To this end we compute
		\begin{align}
			a_{(3)}\# g \cdot S(a_{(1)}\# g) \otimes a_{(2)}\# g &= a_{(3)}\#g\cdot (g^{-1}\rightharpoonup S(a_{(1)})\#g^{-1})\otimes a_{(2)}\# g\nonumber\\
			&= a_{(3)}S(a_{(1)})\#1 \otimes a_{(2)}\# g;\label{eq:double-conormal-base}\\
			a_{(3)}\otimes g \cdot S(a_{(1)}\otimes g) \otimes a_{(2)}\otimes g
			&= a_{(3)}S(a_{(1)})\otimes 1\otimes a_{(2)}\otimes g\label{eq:tensor-conormal-base}.
		\end{align}
	The claim then follows.		
	
	For the remainder of the proof, suppose that $\psi$ is untwistable and has commutative image.  By checking the commutativity condition we can easily determine the following facts: $p\comm v$; $v$ has abelian image, or equivalently $v\comm v$; $u(a(g\rightharpoonup b))=u((h\rightharpoonup a)b)=u(ab)$ for all $g,h\in G$ and $a,b\in\du{G}$, which implies $u\cocomm \id$.  In particular, $u(g\rightharpoonup a)=u(a)$ and $p(g\rightharpoonup a)=p(a)$ for all such $a,g$.
	
	The first part is then immediate, as we have $\psi(a\#g \cdot b\# h) = \psi'(a\otimes g \cdot b\otimes h)$.  Another way of saying this is that when $\psi$ has commutative image we may compute products in either the double or the tensor product without affecting the result.  Furthermore $\psi((g\rightharpoonup a)\# g)=\psi(a\# g)$ for all appropriate $a,g$, and so \[\psi(S(a\# g)) = \psi(g^{-1}\rightharpoonup S(a)\# g^{-1})=\psi(S(a)\# g^{-1})=\psi'(S(a\ot g)).\]  Thus we may perform all computations with $\psi$ in either $\D(G)$ or $\tprod{G}$ as we desire.
	
	The last part of the result follows from \cref{eq:double-conormal-base,eq:tensor-conormal-base} and $u\cocomm \id$.  We need only prove the parts concerning normality of $\psi,\psi'$.
	
	To determine when $\psi'$ is normal, we first note that by commutativity we have
	\[ \psi'(a_{(2)}\otimes g\cdot b\otimes h \cdot S(a_{(1)})\otimes g^{-1}) = a(1)\psi'(b\otimes h).\]
	On the other hand,
	\[
		a_{(2)}\otimes g \cdot \psi'(b\otimes h) \cdot S(a_{(1)})\otimes g^{-1} = a(1) r(h)u(b_{(1)})\otimes g p(b_{(1)}) v(h) g^{-1}.
	\]
$\psi'$ is normal precisely when these two expressions are the same, and we easily find this is equivalent to $B\leq Z(H)$ and $v$ normal.
	
	Finally, we determine when $\psi$ is normal.  By previous remarks, we have	
	
	\begin{align}
		\psi(a_{(2)}\#g \cdot b\# h \cdot S(a_{(1)}\#g)) &= a(1) \psi(b\# ghg^{-1})\nonumber\\
		&= a(1) r(h)u(b_{(1)})\# p(b_{(2)})v(ghg^{-1})\label{eq:psi-normal-inside}.
	\end{align}
 On the other hand, we have
	\begin{align*}
		&a_{(2)}\#g \cdot \psi(b\# h) \cdot \left(g^{-1}\rightharpoonup Sa_{(1)}\# g^{-1}\right)\nonumber\\& \qquad= r(h) a_{(2)} \left(g\rightharpoonup u(b_{(1)})\right) (g p(b_{(2)})v(h)g^{-1}\rightharpoonup S(a_{(1)}))\# g p(b_{(3)})v(h)g^{-1}.
	\end{align*}
	Applying $\epsilon\#\id$ to both expressions we get \[a(1)p(b)v(ghg^{-1})\] for the first and  \[a(1) g p(b) v(h) g^{-1}\] for the second.  These are equal for all $a,b,g,h$ if and only if $B\leq Z(H)$ and $v$ is normal; equivalently, $\psi'$ is normal.  Note that if $v$ is normal and has abelian image, then its image is in fact central.  Therefore $g p(b)v(h)g^{-1}\rightharpoonup S(a) = S(a)$ precisely when $\psi'$ is normal.  Subsequently the previous equation simplifies to 
	\[ a(1) r(h)(g\rightharpoonup u(b_{(1)}))\# p(b_{(2)})v(ghg^{-1}).\]
Comparing with \cref{eq:psi-normal-inside} completes the proof.
\end{proof}

\begin{Lem}
	Any commutative direct tensor factor of $\D(G)$ is also a commutative direct tensor factor of $\tprod{G}$.
\end{Lem}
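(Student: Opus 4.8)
The plan is to realise the given factorisation by an idempotent, binormal Hopf endomorphism of $\D(G)$, to use commutativity of the factor to show that this endomorphism is \emph{untwistable} in the sense of the lemma following the description of $\Hopfmor(\D(G),\D(K))$, and then to transport the factorisation to $\tprod G$ by Fitting's lemma.

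Write $\D(G)\cong L\ot M$ with $L$ commutative, and let $\iota,\pi$ and $\iota',\pi'$ be the structural injections and projections of $L$ and $M$, so that $\iota\pi*\iota'\pi'=\id$. Put $e=\iota\pi$. Since $e$ is an algebra endomorphism, $eS*e=\eta\epsilon$, whence $eS*\id=(eS*e)*(\iota\pi*\iota'\pi')=\iota'\pi'$; and $e\comm\iota'\pi'$, $e\cocomm\iota'\pi'$ by \cref{compositescommute} of \cref{usefulsimplecommutationfacts}, because the injections commute and the projections cocommute pairwise. Thus $e$ is an idempotent Hopf endomorphism of $\D(G)$ that is binormal and has $\Img(e)=L$ commutative.

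The crux is to show that $e$ is untwistable, i.e.\ that in its matrix of components $\left(\begin{smallmatrix}u&r\\ p&v\end{smallmatrix}\right)$ one has $p\comm v$ and $u$ is an algebra morphism. All of this follows from commutativity of $\Img(e)$ by the computations already carried out, inside $\D(G)$, in the proof of \cref{prop:algebra-change-preserves}: commutativity of $e(a\#1)\,e(1\#g)$ and of $e(1\#g)\,e(1\#h)$ gives $p\comm v$ and $v\comm v$ (apply $\epsilon\#\id$, resp.\ compare $\kk G$-components); commutativity of $e(a\#1)\,e(1\#g)\,e(b\#1)$ together with the explicit formula for $e$ gives $u(g\rightharpoonup a)=u(a)$ and $p(g\rightharpoonup a)=p(a)$; and the normality computation in the proof of \cref{prop:algebra-change-preserves} — which uses only the multiplication of $\D(G)$ and that $e$ is an algebra map — applied to $e$ (legitimate since $e$ has been shown normal) forces $v$ to be normal with central image, $\Img(p)$ to be central, and $G$ to act trivially on $\Img(u)$. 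As $\Img(p)$ is spanned by elements of $G$, the relation $u(ab)=u(a_{(1)})\bigl(p(a_{(2)})\rightharpoonup u(b)\bigr)$ then reduces to $u(ab)=u(a)u(b)$, so $u$ is a Hopf algebra morphism.

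Hence $e$ is untwistable and yields a Hopf endomorphism $e'$ of $\tprod G$ with the same matrix and the same underlying linear map. This $e'$ is again idempotent, since the matrix-multiplication formula for composition from \cref{sec:endo-tens-prod} is the same for $\D(G)$ and for $\tprod G$ (every convolution appearing in it uses only the algebra structures of $\kk^G$ and of $\kk G$, which coincide in both), and again binormal, by \cref{prop:conormal-preserved} of \cref{prop:algebra-change-preserves} (conormality) together with the normality part of that proposition. As $\tprod G$ is finite dimensional it satisfies both chain conditions, so Fitting's lemma applied to the binormal idempotent $e'$ gives $\tprod G\cong\Img(e')\ot\lcofix{\tprod G}{e'}$ as a tensor product of Hopf algebras; and $\Img(e')=L$ with the same (commutative) algebra structure, by the untwistable case of \cref{prop:algebra-change-preserves}. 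This exhibits $L$ as a commutative direct tensor factor of $\tprod G$. The only real obstacle is the untwistability step — in particular the deduction that $G$ acts trivially on $\Img(u)$ from the normality of $e$; everything afterwards is bookkeeping with facts already established.
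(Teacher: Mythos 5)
Your proof is correct and follows essentially the same route as the paper's: realise the factor by the idempotent $e=i\pi$, observe that it is binormal and (using commutativity of the image) untwistable, transfer it to $\tprod{G}$, and conclude with Fitting's lemma; you merely expand the paper's ``easily seen'' untwistability step, legitimately reusing the computations from the preceding proposition, which indeed only use the multiplication of $\D(G)$ and commutativity of the image. One typographical slip: the intermediate expression $(eS*e)*(\iota\pi*\iota'\pi')$ equals $\eta\epsilon*\id=\id$, not $\iota'\pi'$; the intended (and correct) computation is $eS*\id=(eS*\iota\pi)*\iota'\pi'=\eta\epsilon*\iota'\pi'=\iota'\pi'$.
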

\begin{proof}
Suppose $L$ is a commutative Hopf sub-algebra of $\D(G)$ such that $\D(G)=M\otimes L$ for some Hopf sub-algebra $M$.  We then have a projection $\pi\colon\D(G)\to L$ with associated right inverse the imbedding $i\colon L\hookrightarrow\D(G)$.

The morphism $i\pi$ is an endomorphism of $\D(G)$.  Since the image is central in $\D(G)$ it is easily seen to be untwistable and binormal.  Therefore $i\pi$ is canonically a twistable, binormal, idempotent endomorphism of $\tprod{G}$ with image $L$.  By Fitting's lemma we conclude that $L$ is also a direct tensor factor of $\tprod{G}$.
\end{proof}
\begin{Rem}
Since $\du{G}$ is commutative we see that the converse will only hold when $G$ is abelian. Indeed since $\D(G)$ is quasitriangular any commutative direct tensor factor of $\D(G)$ is necessarily abelian.
\end{Rem}

The lemma gives one part of the following.
\begin{Thm}
	Let $G$ be a finite group.  Then the following are equivalent.
	\begin{enumerate}
		\item $G$ is purely non-abelian.
		\item $\kk G$ is purely non-abelian.
		\item $\du{G}$ is purely non-abelian.
		\item $\tprod{G}$ is purely non-abelian.
		\item $\D(G)$ is purely non-abelian.
	\end{enumerate}
Indeed, $\tprod{G}$ and $\D(G)$ have the same abelian direct tensor factors.
\end{Thm}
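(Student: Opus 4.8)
The plan is to close the equivalence around the trivial implications $\neg(1)\Rightarrow\neg(2),\neg(3),\neg(4),\neg(5)$ together with $(1)\Rightarrow(2)$, $(1)\Rightarrow(4)$, and $(4)\Rightarrow(5)$, the last of which is essentially the preceding lemma. \emph{Easy direction.} If $G=C\times H$ with $C$ a nontrivial abelian group, then $\kk G=\kk C\ot\kk H$, $\du G=\du C\ot\du H$, $\tprod G\cong\tprod C\ot\tprod H$, and $\D(G)\cong\D(C)\ot\D(H)=\tprod C\ot\D(H)$, the last identity because the conjugation action of the abelian group $C$ on $\du C$ is trivial. Since $\kk C$, $\du C$ and $\tprod C=\du C\ot\kk C$ are all commutative and cocommutative, each of (2)--(5) produces a nontrivial abelian direct tensor factor; hence each of (2)--(5) implies (1).

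\emph{The group--algebra case.} For any finite group $\Gamma$ the Hopf subalgebras of $\kk\Gamma$ are exactly the $\kk N$ with $N\le\Gamma$. Indeed a direct tensor factor, and more generally any Hopf subalgebra, embeds as a sub-bialgebra; dualizing, the subcoalgebras of $\kk\Gamma$ correspond to the quotient algebras of $\kk^\Gamma\cong\prod_g\kk$, hence are precisely the spans of subsets of $\Gamma$, so a sub-bialgebra is spanned by its grouplikes and equals $\kk N$ for the subgroup $N$ of those grouplikes. If $\kk N$ is moreover a direct tensor factor, it is normal ($N\trianglelefteq\Gamma$), the complementary factor is likewise some $\kk M$ with $M\trianglelefteq\Gamma$, and the multiplication isomorphism $\kk N\ot\kk M\xrightarrow{\ \sim\ }\kk\Gamma$ forces $\Gamma=N\times M$; conversely a direct factor of $\Gamma$ yields a tensor factor. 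Thus the indecomposable Hopf tensor factors of $\kk\Gamma$ are exactly the $\kk G_i$ coming from the indecomposable direct factors $G_i$ of $\Gamma$, and since $\kk N$ is abelian iff $N$ is, $\kk G$ is purely non-abelian iff $G$ is. Dualizing (finite-dimensional duality interchanges tensor decompositions and preserves commutativity plus cocommutativity) gives $\du G$ purely non-abelian iff $G$ is, and each $\du{G_i}=(\kk G_i)^*$ is indecomposable and non-abelian when $G_i$ is nontrivial and non-abelian. This proves $(1)\Leftrightarrow(2)\Leftrightarrow(3)$.

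\emph{The main step $(1)\Rightarrow(4)$.} Write $G=G_1\times\dots\times G_k$ with all $G_i$ nontrivial and non-abelian. Then
\[\tprod G\ \cong\ \du{G_1}\ot\dots\ot\du{G_k}\ot\kk G_1\ot\dots\ot\kk G_k\]
is a decomposition of the finite-dimensional Hopf algebra $\tprod G$ into $2k$ indecomposable factors, none of them abelian. A direct tensor factor of an abelian Hopf algebra is again abelian, being at once a sub-bialgebra and a quotient bialgebra. So if $\tprod G$ had a nontrivial abelian direct tensor factor $A$, refining $A$ into indecomposables and invoking the uniqueness statement of the Krull--Remak--Schmidt theorem of \cref{sec:krull-remak-schmidt} would identify some of the $\du{G_i}$ or $\kk G_j$ with abelian Hopf algebras, a contradiction; hence $(1)\Rightarrow(4)$. (Using only the $\kk G_j$, resp.\ only the $\du{G_j}$, the same argument re-proves $(1)\Rightarrow(2)$, resp.\ $(1)\Rightarrow(3)$.) Finally $(4)\Rightarrow(5)$ is the lemma immediately preceding the theorem: an abelian, in particular commutative, direct tensor factor $L$ of $\D(G)$ is a commutative direct tensor factor of $\tprod G$, and since $\D(G)$ and $\tprod G$ carry the same underlying coalgebra, $L$ remains cocommutative there, so it is a nontrivial abelian tensor factor of $\tprod G$. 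Together with the implications above, this establishes the equivalence of (1)--(5).

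For the concluding assertion, one inclusion is again that lemma, using (as in the remark following it) that commutative tensor factors of the quasitriangular Hopf algebra $\D(G)$ are automatically abelian. For the reverse, write $G=C\times H$ with $C$ a maximal abelian direct factor and $H$ purely non-abelian, so that $\tprod G\cong\tprod C\ot\tprod H$ and $\D(G)\cong\tprod C\ot\D(H)$ with $\tprod H$ and $\D(H)$ purely non-abelian by the theorem just proved; decomposing into indecomposables and applying the refined form of Krull--Remak--Schmidt, any abelian direct tensor factor of either side can only be assembled from the indecomposable factors of $\tprod C$, hence is (up to isomorphism) a tensor factor of $\tprod C$, and therefore $\tprod G$ and $\D(G)$ have the same abelian direct tensor factors. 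The main obstacle is the implication $(1)\Rightarrow(4)$: recovering a direct factor of $G$ from an abelian tensor factor of $\du G\ot\kk G$. The smash-product structure of the double is irrelevant to it and the rigidity of Krull--Remak--Schmidt does the work, but this only becomes usable once one knows the building blocks $\du{G_i},\kk G_j$ are indecomposable, which rests on the grouplike-coalgebra structure of $\kk G$; the isomorphism-type bookkeeping in the final ``same factors'' claim is the other place where care is needed.
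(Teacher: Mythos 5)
Your proof is correct, but it takes a genuinely different route from the paper's at the decisive point. The paper establishes $(1)\Leftrightarrow(2)$ via ``Hopf subalgebras of $\kk G$ are subgroup algebras'', $(2)\Leftrightarrow(3)$ by duality, $(4)\Leftrightarrow(2),(3)$ by Krull--Remak--Schmidt, $(4)\Rightarrow(5)$ by the preceding lemma, and then does the hard work in $(5)\Rightarrow(4)$: given an abelian tensor factor $L$ of $\tprod G$ with idempotent $i\pi$, it verifies (using \cref{prop:algebra-change-preserves} and the matrix description of $\Hopfend(\D(G))$) that $i\pi$ is canonically a binormal idempotent endomorphism of $\D(G)$, so that Fitting's lemma exhibits $L$ as a tensor factor of the double. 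You sidestep that entire untwisting computation by closing the cycle the other way: the cheap isomorphism $\D(C\times H)\cong\tprod C\ot\D(H)$ gives $(5)\Rightarrow(1)$ directly, and $(1)\Rightarrow(4)\Rightarrow(5)$ then follows from Krull--Remak--Schmidt plus the lemma, so the smash-product structure never has to be untwisted. This is a real simplification for the equivalence of (i)--(v). The price is paid in the final assertion: the paper's argument shows that the \emph{very same} Hopf subalgebra $L$ is simultaneously a tensor factor of $\tprod G$ and of $\D(G)$, whereas your KRS bookkeeping on $\tprod G\cong\tprod C\ot\tprod H$ versus $\D(G)\cong\tprod C\ot\D(H)$ only identifies the abelian tensor factors up to isomorphism (as you note yourself). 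Since the subsequent theorem on $\Hopfaut(\D(C\times H))$ only needs that $\D(C)$ and $\D(H)$ share no tensor factor up to isomorphism, your weaker form still supports the application, but it is worth being aware that the paper's proof delivers the literal statement.
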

\begin{proof}
	Since the dual of an abelian Hopf algebra is again abelian, the equivalence of the second and third is immediate.  By Krull-Remak-Schmidt, any abelian indecomposable factor of $\tprod{G}$ is isomorphic to an abelian indecomposable factor of either $\kk G$ or $\du{G}$.  Thus the fourth is equivalent to the second and third.  Since any Hopf sub-algebra of $\kk G$ is a subgroup algebra, the first and second are equivalent.  By the lemma the fourth implies the fifth.  To prove the fifth implies the fourth, we need only show that any abelian factor of $\tprod{G}$ yields an abelian factor of $\D(G)$.
	
	So let $L$ be an abelian tensor factor of $\tprod{G}$ with associated projection $\pi$ and inclusion $i$.  We wish to show that $i\pi$ is canonically a binormal endomorphism of $\D(G)$.  Writing $i\pi=\begin{pmatrix} u&r\\p&v \end{pmatrix}$, the properties of $\Hopfend(\D(G))$ and commutativity of the image easily imply the following: $p\comm v$, $v\comm v$, $u\cocomm p$, $p\comm\id$, $v\comm\id$.  In particular, $v$ and $p$ have central image, and $v$ is a (bi)normal group homomorphism.  Since $(i\pi)^*$ is also an idempotent endomorphism with abelian image we similarly conclude that $p^*$ and $u^*$ have central image, and that $u^*$ is a (bi)normal group homomomorphism.  Centrality of the image of $u^*$ (indeed, that $u^*$ has abelian image and is thus a class function) implies that $G$ acts trivially on the image of $u$.  Applying the proposition we conclude that $i\pi$ is canonically a binormal endomorphism of $\D(G)$ with image $L$.  Fitting's lemma then implies that $L$ is a direct tensor factor of $\D(G)$, as desired.
	
	This completes the proof.	
\end{proof}
Thus for $G=C\times H$ with $C$ abelian and $H$ purely non-abelian we conclude that $\D(C)$ and $\D(H)$ have no common direct tensor factors.  Therefore we may apply the results of the previous section to obtain the following.
\begin{Thm}
	Let $G=C\times H$, where $C,H$ are finite groups with $C$ abelian and $H$ purely non-abelian.  Then
	\[\Hopfaut(\D(G)) = \begin{pmatrix}
      \Hopfaut(\D(C))&\Zenthom(\D(H),\D(C))\\
      \Zenthom(\D(C),\D(H))&\Hopfaut(\D(H))
    \end{pmatrix}.\]
\end{Thm}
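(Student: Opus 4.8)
The plan is to deduce this directly from the general theorem of \cref{sec:autom-tens-prod}, applied to the pair of Hopf algebras $\D(C)$ and $\D(H)$. First I would set the stage: since $G=C\times H$ there is a Hopf algebra isomorphism $\D(G)\cong\D(C)\ot\D(H)$ (as recalled in the introduction), so that conjugation by this isomorphism identifies $\Hopfaut(\D(G))$ with $\Hopfaut(\D(C)\ot\D(H))$. The hypotheses needed in \cref{sec:autom-tens-prod} are all in force: we work in the category of vector spaces over $\kk$, which is symmetric, so $\D(C)$ and $\D(H)$ are automatically unbraided and $\D(C)\ot\D(H)$ makes sense as a Hopf algebra; both $\D(C)$ and $\D(H)$ are finite dimensional, hence satisfy both chain conditions; and their antipodes are automorphisms by the Lemma of \cref{sec:endom-that-are} (the braiding on vector spaces is the flip, which has order $2$), which is also standard for finite-dimensional Hopf algebras.

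The one substantive input is that $\D(C)$ and $\D(H)$ have no nontrivial common direct tensor factor, and this has already been observed in the discussion preceding the statement: $\D(C)$ is abelian because $C$ is, so every direct tensor factor of $\D(C)$ is abelian, whereas $\D(H)$ is purely non-abelian by the equivalence ``$H$ purely non-abelian $\iff$ $\D(H)$ purely non-abelian'' of the preceding theorem, i.e.\ $\D(H)$ has no nontrivial abelian direct tensor factor; hence a common factor of the two would be simultaneously abelian and trivial. With this in hand, the theorem of \cref{sec:autom-tens-prod}, applied with $H:=\D(C)$ and $K:=\D(H)$, yields $\Hopfaut(\D(C)\ot\D(H))=\mathcal A$, where, by the identification of $\Hopfend(\D(C)\ot\D(H))$ with quadruples of Hopf algebra homomorphisms from \cref{sec:endo-tens-prod}, the group $\mathcal A$ is precisely
\[\mathcal A=\begin{pmatrix}\Hopfaut(\D(C))&\Zenthom(\D(H),\D(C))\\\Zenthom(\D(C),\D(H))&\Hopfaut(\D(H))\end{pmatrix}.\]
Combining this with the isomorphism $\D(G)\cong\D(C)\ot\D(H)$ gives the asserted description of $\Hopfaut(\D(G))$.

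I do not expect a genuine obstacle: the proof is an assembly of results already in place (Fitting's lemma, the Krull--Remak--Schmidt theorem, the $\Hopfaut$-characterization of \cref{sec:autom-tens-prod}, and the structure theorem just above identifying the abelian tensor factors of $\D(H)$). The only point that deserves a moment's care is that all identifications are the canonical ones --- conjugation by the fixed isomorphism $\D(G)\cong\D(C)\ot\D(H)$ carries $\Hopfaut(\D(G))$ onto $\Hopfaut(\D(C)\ot\D(H))$, which in turn matches $\mathcal A$ under the quadruple-of-components bijection --- so that the displayed equality is to be read either as an isomorphism of groups or, if one insists on literal equality, as the definition of the right-hand side via that bijection. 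This bookkeeping, rather than any mathematics, is the whole content of the last step.
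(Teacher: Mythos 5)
Your proposal is correct and is essentially the paper's own argument: the paper's justification is precisely the sentence preceding the theorem, namely that $\D(G)\cong\D(C)\ot\D(H)$ and that the preceding structure theorem shows $\D(C)$ (abelian) and $\D(H)$ (purely non-abelian) share no nontrivial common direct tensor factor, so the equality $\Hopfaut(\D(C)\ot\D(H))=\mathcal A$ from \cref{sec:autom-tens-prod} applies verbatim. Your additional checks (chain conditions from finite dimensionality, antipodes being automorphisms, the symmetric base category) are the same implicit hypotheses the paper relies on.
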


The determination of the $\Zenthom$ terms remains a computational problem, but all of the components of these morphisms are guaranteed to be morphisms of Hopf algebras, and so determined by group homomorphisms.  Note that for $\Zenthom(\D(H),\D(C))$ we have a commutative image, as considered in \cref{prop:algebra-change-preserves}.  Whenever the field is such that $\D(C)$ is just a group algebra then the situation is further simplified.  In this case $\Zenthom(\D(C),\D(H))=\Hom(\Gamma_C,Z(\Gamma_H))$, a group of morphisms between abelian groups.

\begin{Expl}\label{ex:dihedrals}
	Consider a field $\kk$ of characteristic not $2$.  For $n\geq 3$ let $G=D_{2n}$ be the dihedral group of order $2n$, and suppose that $n\equiv 2\bmod 4$.  The group $G$ has an abelian direct factor precisely under this assumption on $n$, in which case $G\cong\Z_2\times D_n$. So we take $C=\Z_2$ and $H=D_n$, and note $\Gamma_C\cong\Z_2^2$ and $\Gamma_H\cong \Z_2\times D_n$.  It is also well-known that $\operatorname{Aut}(\Gamma_C)\cong S_3$.  By \cite{K14} we have $\Hopfaut(\D(D_{n}))\cong \Z_2\times \operatorname{Aut}(D_n)\cong \Z_2\times\operatorname{Hol}(\Z_{n/2})$.  Here $\operatorname{Hol}(\Z_n)=\Z_n\rtimes\operatorname{Aut}(\Z_n)$ is the holomorph of $\Z_n$, a group of order $n\phi(n)$, where $\phi$ is the Euler totient function.
	
	We have $Z(\Gamma_H)\cong\Z_2$, from which it follows that $\Hom(\Gamma_C,Z(\Gamma_H))\cong \Z_2^2$ as groups.  We claim that \[\Zenthom(\D(H),\D(C))\cong\Z_2^2\] as well.  Let $(u,r,p,v)\in\Hom(\D(H),\D(C))$.  The abelian normal subgroups of $D_n$ all have odd order, so $p$ is necessarily trivial.  By normality of $u^*\circ v$, we have $u^*(b^{v(x)}) = u^*(b) = u^*(b)^x$ for all $x\in D_n$ and $b\in \Z_2$.  Since no order 2 subgroup of $D_n$ is normal we conclude that $u^*$ is trivial.  From this we can then easily check that $\Hopfmor(\D(H),\D(C))=\Zenthom(\D(H),\D(C))$.  Since there are two possible homomorphisms $v\colon D_n\to\Z_2$, and two possible homomorphisms $r\colon D_n\to\widehat{\Z_2}$, all of which satisfy the necessary compatibilities, it quickly follows that $\Zenthom(\D(H),\D(C))\cong\Z_2^2$ as desired.
	
	As a consequence, $|\Hopfaut(\D(D_{2n}))| = 2^{5}\cdot 3\cdot n\cdot\phi(n/2)$ whenever $n\equiv 2\bmod 4$.  For $n=6$ the order is $1152=2^7\cdot 3^2$.  The description and order of $\Hopfaut(\D(D_{2n}))$ for $n\not\equiv 2\bmod 4$ is given in \cite{K14}.
\end{Expl}
\bibliographystyle{alpha}
\bibliography{andere,arxiv,eigene,mathscinet}

\def\cprime{$'$} \def\cfgrv#1{\ifmmode\setbox7\hbox{$\accent"5E#1$}\else
  \setbox7\hbox{\accent"5E#1}\penalty 10000\relax\fi\raise 1\ht7
  \hbox{\lower1.05ex\hbox to 1\wd7{\hss\accent"12\hss}}\penalty 10000
  \hskip-1\wd7\penalty 10000\box7} \def\cprime{$'$}
  \def\cfgrv#1{\ifmmode\setbox7\hbox{$\accent"5E#1$}\else
  \setbox7\hbox{\accent"5E#1}\penalty 10000\relax\fi\raise 1\ht7
  \hbox{\lower1.05ex\hbox to 1\wd7{\hss\accent"12\hss}}\penalty 10000
  \hskip-1\wd7\penalty 10000\box7} \def\cprime{$'$} \def\cprime{$'$}
  \def\germ{\mathfrak}\def\cprime{$'$}
  \def\cfgrv#1{\ifmmode\setbox7\hbox{$\accent"5E#1$}\else
  \setbox7\hbox{\accent"5E#1}\penalty 10000\relax\fi\raise 1\ht7
  \hbox{\lower1.05ex\hbox to 1\wd7{\hss\accent"12\hss}}\penalty 10000
  \hskip-1\wd7\penalty 10000\box7}
\begin{thebibliography}{{Kei}13}

\bibitem[ABM14]{ABM:CBPHA}
A.~L. Agore, C.~G. Bontea, and G.~Militaru.
\newblock Classifying bicrossed products of {H}opf algebras.
\newblock {\em Algebr. Represent. Theory}, 17(1):227--264, 2014.

\bibitem[BC10]{BidCur:AFAG}
J.~N.~S. Bidwell and M.~J. Curran.
\newblock Automorphisms of finite abelian groups.
\newblock {\em Math. Proc. R. Ir. Acad.}, 110A(1):57--71, 2010.

\bibitem[BCM06]{BidCurMcC:ADPFG}
J.~N.~S. Bidwell, M.~J. Curran, and D.~J. McCaughan.
\newblock Automorphisms of direct products of finite groups.
\newblock {\em Arch. Math. (Basel)}, 86(6):481--489, 2006.

\bibitem[BD98]{BesDra:HBMCMBC}
Yuri Bespalov and Bernhard Drabant.
\newblock Hopf (bi-)modules and crossed modules in braided monoidal categories.
\newblock {\em J. Pure Appl. Algebra}, 123(1-3):105--129, 1998.

\bibitem[Bid08]{Bid:ADPFG2}
J.~N.~S. Bidwell.
\newblock Automorphisms of direct products of finite groups. {II}.
\newblock {\em Arch. Math. (Basel)}, 91(2):111--121, 2008.

\bibitem[Bur11]{Bur:CFPHA}
Sebastian Burciu.
\newblock On complements and the factorization problem of {H}opf algebras.
\newblock {\em Cent. Eur. J. Math.}, 9(4):905--914, 2011.

\bibitem[Chi10]{Chi:EMHA}
Alexandru Chirv{\u{a}}situ.
\newblock On epimorphisms and monomorphisms of {H}opf algebras.
\newblock {\em J. Algebra}, 323(5):1593--1606, 2010.

\bibitem[DPR90]{DijPasRoc:QHAGCOM}
R.~Dijkgraaf, V.~Pasquier, and P.~Roche.
\newblock Quasi {H}opf algebras, group cohomology and orbifold models.
\newblock {\em Nuclear Phys. B Proc. Suppl.}, 18B:60--72, 1990.

\bibitem[HR07]{HilRhe:AFAG}
Christopher~J. Hillar and Darren~L. Rhea.
\newblock Automorphisms of finite abelian groups.
\newblock {\em Amer. Math. Monthly}, 114(10):917--923, 2007.

\bibitem[{Kei}13]{K14}
M.~{Keilberg}.
\newblock {Automorphisms of the doubles of purely non-abelian finite groups}.
\newblock {\em ArXiv e-prints}, November 2013.
\newblock submitted to \textit{Algebras and Representation Theory}.

\bibitem[LZ07]{LiuZh07:AHFD}
Chia-Hsin Liu and James~J. Zhang.
\newblock Artinian {H}opf algebras are finite dimensional.
\newblock {\em Proc. Amer. Math. Soc.}, 135(6):1679--1680 (electronic), 2007.

\bibitem[Mon93]{Mon:HAAR}
Susan Montgomery.
\newblock {\em Hopf algebras and their actions on rings}, volume~82 of {\em
  CBMS Regional Conference Series in Mathematics}.
\newblock AMS, Providence, Rhode Island, 1993.

\bibitem[M{\"u}g03]{Mue:SMC}
Michael M{\"u}ger.
\newblock On the structure of modular categories.
\newblock {\em Proc. London Math. Soc. (3)}, 87(2):291--308, 2003.

\bibitem[NZ89]{NicZoe:HAFT}
Warren~D. Nichols and M.~B. Zoeller.
\newblock A {H}opf algebra freeness theorem.
\newblock {\em Amer. J. Math.}, 111:381--385, 1989.

\bibitem[Rad85]{Rad:SHAP}
David~E. Radford.
\newblock The structure of {Hopf} algebras with a projection.
\newblock {\em J. Algebra}, 92:322--347, 1985.

\bibitem[Rem11]{Rem:ZEGDUF}
Robert Remak.
\newblock Über die {Z}erlegung der endlichen {G}ruppen in direkte unzerlegbare
  {F}aktoren.
\newblock {\em Journal für die reine und angewandte Mathematik}, 139:293 --
  308, 1911.

\bibitem[Rot95]{Rot:Book}
Joseph~J. Rotman.
\newblock {\em An introduction to the theory of groups}, volume 148 of {\em
  Graduate Texts in Mathematics}.
\newblock Springer-Verlag, New York, fourth edition, 1995.

\bibitem[Sch98]{Sch:BHABC}
Peter Schauenburg.
\newblock On the braiding on a {H}opf algebra in a braided category.
\newblock {\em New York J. Math.}, 4:259--263, 1998.

\bibitem[Sch01]{Scha:NZTHACYDM}
Boris Scharfschwerdt.
\newblock The {N}ichols {Z}oeller theorem for {H}opf algebras in the category
  of {Y}etter {D}rinfeld modules.
\newblock {\em Comm. Algebra}, 29(6):2481--2487, 2001.

\bibitem[Sho28]{Sho:AAG}
K.~Shoda.
\newblock {\"U}ber die {A}utomorphismen einer endlichen abelschen {G}ruppe.
\newblock {\em Math. Ann.}, 100(1):674--686, 1928.

\end{thebibliography}

\end{document}